\theoremstyle{plain}
\newtheorem{thm}{Theorem}[section]
\newtheorem{prop}[thm]{Proposition}
\newtheorem{lem}[thm]{Lemma}
\newtheorem{cor}[thm]{Corollary}
\newtheorem{thmapp}[thm]{Theorem}
\theoremstyle{definition}
\newtheorem{dfn}[thm]{Definition}
\newtheorem{rem}[thm]{Remark}
\newtheorem{notation}[thm]{Notation}
\newcommand{\Z}{\mathbb{Z}}
\newcommand{\N}{\mathbb{N}}
\newcommand{\C}{\mathbb{C}}
\newcommand{\R}{\mathbb{R}}
\newcommand{\Q}{\mathbb{Q}}
\newcommand{\OO}{\mathcal{O}}
\DeclareMathOperator{\Exc}{Exc}
\DeclareMathOperator{\mult}{mult}
\DeclareMathOperator{\Supp}{Supp}
\DeclareMathOperator{\Nklt}{Nklt}
\begin{document}
	
	\title[Special MMP for log canonical generalised pairs]{Special MMP for\\ log canonical generalised pairs}
	
	\author{Vladimir Lazi\'c}
	\address{\foreignlanguage{german}{Fachrichtung Mathematik, Campus, Geb\"aude E2.4, Universit\"at des Saarlandes, 66123 Saarbr\"ucken, Germany}}
	\email{lazic@math.uni-sb.de}
	
	\author[Nikolaos Tsakanikas]{Nikolaos Tsakanikas\\ {\MakeLowercase{with an appendix joint with} Xiaowei Jiang}}
	\address{\foreignlanguage{german}{Fachrichtung Mathematik, Campus, Geb\"aude E2.4, Universit\"at des Saarlandes, 66123 Saarbr\"ucken, Germany}}
	\email{tsakanikas@math.uni-sb.de}
	
	\address{Department of Mathematics, Tsinghua University, Hai Dian District, Beijing, China 100084}
	\email{jxw20@mails.tsinghua.edu.cn}
	
	\thanks{Lazi\'c gratefully acknowledges support by the Deutsche Forschungsgemeinschaft (DFG, German Research Foundation) -- Project-ID 286237555 -- TRR 195. We would like to thank Jihao Liu and Kenta Hashizume for useful discussions related to this work, and Xiaowei Jiang for pointing out that an assumption in lower dimensions, which was present in a previous version of this paper, can be removed from Lemma \ref{lem:lifting_scaling_g} and from Theorems \ref{thm:lifting_scaling_g} and \ref{thm:HL18_term_with_scaling}. We would also like to thank the referee for useful comments and suggestions. \newline
		\indent 2020 \emph{Mathematics Subject Classification}: 14E30.\newline
		\indent \emph{Keywords}: Minimal Model Program, generalised pairs, termination of flips, minimal models, weak Zariski decompositions, Mori fibre spaces.
	}
	
	\begin{abstract}
		We show that minimal models of $\Q$-factorial NQC log ca\-no\-ni\-cal generalised pairs exist, assuming the existence of minimal models of smooth varieties. More generally, we prove that on a $\Q$-factorial NQC log canonical generalised pair $ (X,B+M) $ we can run an MMP with scaling of an ample divisor which terminates, assuming that it admits an NQC weak Zariski decomposition or that $ K_X+B+M$ is not pseudoeffective. As a consequence, we establish several existence results for minimal models and Mori fibre spaces. 
	\end{abstract}

	\maketitle
	
	\begingroup
		\hypersetup{linkcolor=black}
		\setcounter{tocdepth}{1}
		\tableofcontents
	\endgroup

	\section{Introduction}
	
	This paper addresses the problems of the existence of minimal models and Mori fibre spaces for generalised pairs. The category of generalised pairs, which enlarges the category of usual pairs, was introduced in \cite{BZ16} and was instrumental in several recent developments in birational geometry. A partial overview can be found in \cite{Bir21c}, with further developments in \cite{CT20,Hash20b,HanLiu20,HaconLiu21,HanLiu21,LT22} and the references therein. As indicated in \cite{Mor18,CT20,LT22}, understanding generalised pairs is essential even if one is only interested in studying the birational geometry of varieties.
	
	Generalised pairs are, roughly speaking, couples of the form $(X,B+M)$, where $X$ is a normal projective variety, $B$ is an effective $\R$-divisor on $X$ and $M$ is an auxiliary $\R$-divisor on $ X $ such that $ K_X+B+M $ is $ \R $-Cartier. Here, the divisor $M$ has certain positivity properties and satisfies the additional \emph{NQC condition}, which allows for generalised pairs to behave similarly to usual pairs in many proofs; for the precise definitions, see Section \ref{section:Preliminaries}.
	
	\medskip
	
	In this paper we extend our earlier results from \cite{LT22} to the setting of generalised pairs and we generalise several other results from the context of usual pairs to the category of generalised pairs. The following is our main result.
		
	\begin{thm}\label{thm:mainthm}
		Assume the existence of minimal models for smooth varieties of dimension $n-1$.
		
		Let $ (X/Z,B+M) $ be a $ \Q $-factorial NQC log canonical generalised pair of dimension $ n $. Assume that either
		\begin{enumerate}[\normalfont (a)]
			\item $ (X,B+M) $ admits an NQC weak Zariski decomposition over $Z$, or 
		
			\item $ K_X+B+M$ is not pseudoeffective over $ Z $.
		\end{enumerate}
		Then for any $\R$-divisors $ P $ and $ N\geq0 $ on $ X $ such that $ P $ is the pushforward of an NQC divisor over $Z$ on a birational model of $ X $ and such that the generalised pair $ \big( X, (B+N) + (M+P) \big) $ is log canonical and the divisor $ K_X + B + N + M + P $ is nef over $ Z $, there exists a $ (K_X + B + M) $-MMP over $Z$ with scaling of $P+N$ that terminates.
		
		In particular, the generalised pair $ (X,B+M) $ has either a minimal model over $Z$ or a Mori fibre space over $ Z $.
	\end{thm}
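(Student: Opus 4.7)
The plan is to adapt the strategy of \cite{LT22} to the setting of generalised pairs, splitting according to the two hypotheses (a) and (b) and using induction on the dimension $n$. The assumption on minimal models for smooth varieties of dimension $n-1$ will only enter when we reduce to a lower-dimensional MMP via adjunction to a log canonical centre or to a component of the effective part of the weak Zariski decomposition; before that, everything is set up on $X$ itself.

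First I would note that the existence of flips and divisorial contractions for $\Q$-factorial NQC log canonical generalised pairs is available from the literature, so the $(K_X+B+M)$-MMP over $Z$ with scaling of $P+N$ is well defined: it produces models $(X_i,B_i+M_i)$ and a non-increasing sequence of scaling coefficients $\lambda_1\geq \lambda_2\geq\cdots \geq 0$, and we only need to show that this sequence is finite. For case (b), if $K_X+B+M$ is not pseudoeffective over $Z$, then $\lambda_i$ cannot tend to $0$, since $K_{X_i}+B_i+M_i$ would otherwise be a limit of pseudoeffective classes and hence pseudoeffective. So $\lambda_i$ stabilises at some $\lambda_\infty>0$, and after finitely many steps the MMP becomes an MMP with scaling for a non-pseudoeffective divisor; the latter terminates with a Mori fibre space by standard arguments on generalised pairs.

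For case (a), I would use the NQC weak Zariski decomposition $K_X+B+M\equiv Q+E$ with $Q$ NQC nef over $Z$ and $E\geq 0$. Assuming for contradiction that the MMP is infinite, the point is to transfer the infinite tail of flips to a proper subvariety $W$ of $X$, of dimension at most $n-1$, sitting either inside $\Supp E$ or inside $\Nklt(X,B+M)$. This transfer is exactly what Lemma \ref{lem:lifting_scaling_g} and Theorem \ref{thm:lifting_scaling_g} are designed to accomplish: the infinite sequence of flips restricts, via the generalised adjunction formula, to an infinite MMP with scaling on a $\Q$-factorial NQC log canonical generalised pair structure on (a suitable $\Q$-factorialisation of) $W$. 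The hypothesis on minimal models in dimension $n-1$, combined with Theorem \ref{thm:HL18_term_with_scaling}, ensures that this lower-dimensional MMP must terminate, yielding the required contradiction; then the MMP on $X$ terminates with either a minimal model or a Mori fibre space according to whether $K_X+B+M$ is pseudoeffective over $Z$ or not.

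The main obstacle is the reduction step in case (a): one has to show that an infinite sequence of $(K_X+B+M)$-flips with scaling actually induces an infinite MMP on the lower-dimensional pair obtained by adjunction, and that this induced MMP is still of the form controlled by our inductive hypothesis, i.e.\ a $(K_W+B_W+M_W)$-MMP with scaling of a divisor that is the pushforward of an NQC divisor from a birational model. The delicate points are the preservation of the NQC weak Zariski decomposition under restriction to $W$ and the correct choice of $W$ so that the scaling coefficients on $W$ do not stabilise prematurely; once these are handled by the lifting-of-scaling technique, the proof is completed by the induction hypothesis applied on $W$.
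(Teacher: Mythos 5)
Your proposal has several genuine gaps, both in the overall strategy and in the individual steps.

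First, your reading of Lemma \ref{lem:lifting_scaling_g} and Theorem \ref{thm:lifting_scaling_g} is wrong. These results do \emph{not} restrict the MMP to a log canonical centre or a subvariety of lower dimension: they \emph{lift} a sequence of flips with scaling on the $X_i$'s to a sequence of MMP steps with scaling on dlt blowups $X_i'\to X_i$, which are birational models of the \emph{same} dimension. There is no generalised adjunction or special-termination mechanism encoded in those statements. Your claim that ``the infinite sequence of flips restricts, via the generalised adjunction formula, to an infinite MMP with scaling on a $\Q$-factorial NQC log canonical generalised pair structure on (a suitable $\Q$-factorialisation of) $W$'' is not something the cited lemmas deliver, and the special-termination route has its own serious obstacles in the log canonical generalised setting (controlling difficulties, behaviour of the nef part under adjunction, isomorphisms of flips near the restricted locus) that you do not address.

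Second, your case (b) argument is flawed. From $K_X+B+M$ non-pseudoeffective over $Z$ you deduce only that the nef thresholds $\lambda_i$ are bounded below by some $\lambda_\infty>0$; this does \emph{not} imply that the sequence $\lambda_i$ stabilises, and even if it did, there is nothing preventing infinitely many flips at the same scaling coefficient. In fact, the paper's handling of case (b) is not trivial at all: in Lemma \ref{lem:MM_existence_HH19} one passes to the pseudoeffective threshold $\tau$, takes a dlt blowup, and then invokes Theorem \ref{thm:WZD_existence_g} (which uses the MMP to reach a Mori fibre space and the canonical bundle formula for generalised pairs, inducting on the base) to produce an NQC weak Zariski decomposition.

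Third, you omit the actual mechanism that makes the paper's proof work. The argument is a generalisation of the Hashizume--Hu strategy: in Proposition \ref{prop:HH19_prop_g} one fixes an ample $\R$-divisor $A=\sum \alpha_k A^{(k)}$ whose coefficients $\alpha_k$ are linearly independent over the field $\Q(r_1,\dots,r_m)$ generated by the coefficients in a rational decomposition of $K_X+B+M$; this forces the nef thresholds to be \emph{strictly} decreasing, so that the limit $\lambda$ is never attained. One then shows (via Theorem \ref{thm:WZD_existence_g}, Lemma \ref{lem:MM_bigger_boundary_g} and Lemma \ref{lem:MM_existence_HH19}) that the g-pair with the boundary enlarged by $\lambda A$ has a minimal model in the sense of Birkar--Shokurov, and applies Theorem \ref{thm:HL18_term_with_scaling} to conclude termination. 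The hypothesis in dimension $n-1$ enters through Theorem \ref{thm:WZD_existence_g}, not through a special-termination-style induction along a log canonical centre.

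Finally, the statement of Theorem \ref{thm:mainthm} as proved in the paper follows almost verbatim from Theorem \ref{thm:HH19_thm_g}: in case (a), \cite[Theorem 4.4(i)]{LT22} converts the weak Zariski decomposition into a minimal model in the sense of Birkar--Shokurov, and in case (b) one just observes $K_X+B+M$ is not pseudoeffective; then Theorem \ref{thm:HH19_thm_g} gives the MMP with scaling of $P+N$ directly. Your outline does not contain these reductions, and the route you sketch does not converge to a correct proof without supplying the missing HH20-style machinery.
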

	
	For the definition of an NQC weak Zariski decomposition, see Section \ref{section:Preliminaries}. Note that in this and in all other results of this paper, the assumption in lower dimensions means the existence of \emph{relative} minimal models, that is, minimal models of smooth quasi-projective varieties which are projective and whose canonical class is pseudoeffective over another normal quasi-projective variety.

	Theorem \ref{thm:mainthm} shows in particular that, assuming the existence of minimal models for smooth varieties of dimension $n-1$, the existence of an NQC weak Zariski decomposition of a $\Q$-factorial NQC log canonical generalised pair is equivalent to the existence of a minimal model of that generalised pair, see Remark \ref{rem:MMimplWZD}. This generalises \cite[Theorem 1.5]{Bir12b} to the setting of generalised pairs with significantly refined assumptions in lower dimensions, cf.\ \cite[Theorem B]{LT22}.
	
	\medskip
	
	After this paper appeared on the arXiv, Hashizume \cite{Hash22} and Liu and Xie \cite{LX22} obtained new results regarding the existence of minimal models in the sense of Birkar-Shokurov for generalised pairs with boundaries containing ample divisors. This allows us to improve considerably Proposition \ref{prop:HH19_prop_g} below by removing the assumption in lower dimensions. In particular, we establish the existence of Mori fibre spaces for non-pseudoeffective generalised pairs in any dimension in the following Theorem \ref{thm:Mfs}. The proof of this result is given in the appendix, which is written jointly with Xiaowei Jiang.
	
	\begin{thm}\label{thm:Mfs}
		Let $ (X/Z,B+M) $ be a $\Q$-factorial NQC log canonical generalised pair. Assume that $ (X,B+M) $ has a minimal model in the sense of Birkar-Shokurov over $ Z $ or that $ K_X+B+M $ is not pseudoeffective over $ Z $. Let $ A $ be an effective $ \R $-Cartier $ \R $-divisor on $ X $ which is ample over $ Z $ such that $ K_X + B + A + M $ is nef over $Z$. Then there exists a $ (K_X + B + M) $-MMP over $Z$ with scaling of $A$ that terminates.
		
		In particular:
		\begin{enumerate}[\normalfont (a)]
			\item $(X,B+M)$ has a minimal model in the sense of Birkar-Shokurov over $Z$ if and only if it has a minimal model over $Z$,
			
			\item if $K_X+B+M$ is not pseudoeffective over $Z$, then $(X,B+M)$ has a Mori fibre space over $Z$.
		\end{enumerate}
	\end{thm}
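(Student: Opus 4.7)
My plan is to invoke the recent results of Hashizume \cite{Hash22} and Liu-Xie \cite{LX22}, which establish existence of minimal models in the sense of Birkar-Shokurov and termination of MMPs with scaling for $\Q$-factorial NQC log canonical generalised pairs whose boundary contains an ample divisor, crucially without any assumption in lower dimensions. The key observation is that for any $\epsilon \in (0,1)$, the generalised pair $(X, B + \epsilon A + M)$ is $\Q$-factorial NQC log canonical with the ample divisor $\epsilon A$ in its boundary, and $K_X + B + \epsilon A + (1-\epsilon)A + M = K_X + B + A + M$ is nef over $Z$. Hence the $(K_X + B + \epsilon A + M)$-MMP over $Z$ with scaling of the ample divisor $(1-\epsilon)A$ exists and terminates by the cited results.

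I would first handle case (b). If $K_X + B + M$ is not pseudo-effective over $Z$, then for $\epsilon > 0$ sufficiently small $K_X + B + \epsilon A + M$ is also not pseudo-effective, so the above MMP terminates with a Mori fibre space. At each step the contracted ray $R$ satisfies $(K_X + B + \epsilon A + M) \cdot R < 0$, and since the scaling divisor $(1-\epsilon)A$ is positive on $R$ one also has $A \cdot R > 0$ and hence $(K_X + B + M) \cdot R < 0$. The scaling thresholds match up via an affine relation $\lambda_i = \epsilon + (1-\epsilon)\mu_i$, so the same sequence of birational modifications constitutes a terminating $(K_X + B + M)$-MMP with scaling of $A$; the final Mori contraction is also a Mori fibre space for $(X, B+M)$ because $A$ is ample over the base while $K_X + B + \epsilon A + M$ is trivial there.

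Case (a) is more delicate. The Birkar-Shokurov minimal model hypothesis forces $K_X + B + M$ to be pseudo-effective over $Z$, so $(X, B + \epsilon A + M)$ admits a minimal model for every $\epsilon > 0$ by Hashizume--Liu-Xie. I would construct the desired $(K_X + B + M)$-MMP with scaling of $A$ in successive stages: at stage $k$, on the current model $X_k$ with scaling threshold $\lambda_k > 0$, choose $\epsilon_{k+1} \in (0, \lambda_k)$ and apply Hashizume--Liu-Xie to $(X_k, B_k + \epsilon_{k+1} A_k + M_k)$, which produces finitely many MMP steps driving the scaling threshold below $\epsilon_{k+1}$. The main obstacle is to ensure finitely many stages suffice, i.e., that $\lambda_i$ reaches $0$ in finite time. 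The case $\lim_i \lambda_i > 0$ is excluded by picking $t$ just below this limit and observing that the infinite tail of the MMP would form an infinite $(K_X + B + tA + M)$-MMP, contradicting Hashizume--Liu-Xie termination for $(X, B + tA + M)$. The subtler case $\lim_i \lambda_i = 0$ without actual termination has to be excluded via a lifting argument, relying on the minimal models of the perturbed pairs $(X, B + \epsilon A + M)$ and invoking techniques analogous to those developed in the body of the paper for bridging perturbed to unperturbed MMPs, ultimately forcing $\lambda_i = 0$ at a finite stage. The consequences (a) and (b) then follow immediately from the termination of the MMP.
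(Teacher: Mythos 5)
Your proposal follows essentially the same strategy as the paper: perturb the boundary by a multiple of $A$, invoke the Hashizume/Liu--Xie results, and transfer the conclusion back to the unperturbed g-pair. The paper's actual proof is a one-liner that re-runs the argument of Theorem~\ref{thm:HH19_thm_g}, substituting Theorem~\ref{thm:HLX} for Lemma~\ref{lem:MM_existence_HH19} and Proposition~\ref{prop:HH19_prop_g_appendix} for Proposition~\ref{prop:HH19_prop_g}. So the high-level route you chose is the right one.

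There are, however, a couple of places where your sketch outsources more to \cite{Hash22,LX22} than the paper actually extracts. The paper's Theorem~\ref{thm:HLX} only records existence of a minimal model in the sense of Birkar--Shokurov for a \emph{pseudoeffective} g-pair whose boundary contains an ample divisor; it does not directly assert termination of an MMP with scaling, and it does not cover non-pseudoeffective perturbations. Your treatment of the non-pseudoeffective case perturbs by $\epsilon A$ with $\epsilon$ so small that $K_X+B+\epsilon A+M$ stays non-pseudoeffective and then declares termination with a Mori fibre space as immediate from the cited results. As stated this is a gap: with only Theorem~\ref{thm:HLX} in hand one must still \emph{prove} that such an MMP terminates, and the paper does this through Proposition~\ref{prop:HH19_prop_g_appendix}, whose key observation is that the limiting scaling threshold $\lambda$ is strictly positive (otherwise $K_X+B+M$ would be pseudoeffective), so that $(X,(B+\lambda A)+M)$ \emph{is} pseudoeffective, Theorem~\ref{thm:HLX} applies, and then Theorem~\ref{thm:HL18_term_with_scaling} forces termination. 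The same Theorem~\ref{thm:HL18_term_with_scaling} is exactly the ``lifting argument'' you flag for the $\lim_i\lambda_i=0$ subtlety in the pseudoeffective case but do not spell out. Your transfer argument relating the perturbed and unperturbed scaling thresholds is fine, and your iterative stage-by-stage scheme mirrors the proof of Theorem~\ref{thm:HH19_thm_g}; what is missing is the explicit invocation of Proposition~\ref{prop:HH19_prop_g_appendix} and Theorem~\ref{thm:HL18_term_with_scaling} (or, alternatively, for part (b) the paper's second, more elementary route via Lemma~\ref{lem:hilfe2}, which replaces the g-pair with an honest log canonical pair and then applies \cite[Theorem~1.7]{HH20}).
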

	
	Theorem \ref{thm:mainthm} has several immediate consequences. First of all, it allows us to reduce the problem of the existence of minimal models for $\Q$-factorial NQC log canonical generalised pairs to the problem of the existence of minimal models for smooth varieties, cf.\ \cite[Theorem A]{LT22}.
	
	\begin{cor} \label{cor:MM_impl_g}
		The existence of minimal models for smooth varieties of dimension $n$ implies the existence of minimal models for $\Q$-factorial NQC log canonical generalised pairs of dimension $n$.
	\end{cor}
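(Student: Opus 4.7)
The plan is to reduce the corollary to Theorem \ref{thm:mainthm} by constructing an NQC weak Zariski decomposition out of the stronger hypothesis in dimension $n$. First observe that the hypothesis of existence of relative minimal models for smooth varieties of dimension $n$ includes the hypothesis in dimension $n-1$ required by Theorem \ref{thm:mainthm}, so the latter is at our disposal. Let $(X/Z,B+M)$ be a $\Q$-factorial NQC log canonical generalised pair of dimension $n$. If $K_X+B+M$ is not pseudoeffective over $Z$, Theorem \ref{thm:mainthm}(b) immediately yields a Mori fibre space. The substantive case is the one in which $K_X+B+M$ is pseudoeffective over $Z$, and by Theorem \ref{thm:mainthm}(a) it then suffices to exhibit an NQC weak Zariski decomposition of $K_X+B+M$ over $Z$.

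To construct such a decomposition I would pass to a log resolution $f\colon Y\to X$ on which the nef b-divisor $\mathbf{M}$ descends, so that $M_Y := \mathbf{M}_Y$ is an NQC $\R$-divisor on the smooth variety $Y$. The log canonicity of $(X,B+M)$ gives
$$K_Y + B_Y + M_Y = f^*(K_X+B+M) + E,$$
where $B_Y\geq 0$ is a boundary (coefficients in $[0,1]$) and $E\geq 0$ is $f$-exceptional. Since an NQC weak Zariski decomposition is defined up to birational modification, producing one for $K_Y+B_Y+M_Y$ whose effective part dominates $E$ gives one for $f^*(K_X+B+M)$ and hence for $K_X+B+M$; if necessary, one enlarges $B_Y$ on exceptional divisors to absorb $E$ so that the problem really reduces to producing an NQC weak Zariski decomposition for the log smooth NQC lc generalised pair $(Y,B_Y+M_Y)$ over $Z$.

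This last step is the main obstacle. One would like to exploit the hypothesis on the smooth $n$-dimensional variety $Y$ via the equivalence ``existence of a minimal model $\Leftrightarrow$ existence of an NQC weak Zariski decomposition'' for smooth underlying varieties, and to combine it with the presentation $M_Y = \sum\mu_j M_Y^j$ of $M_Y$ as a nonnegative $\R$-linear combination of nef Cartier divisors. The difficulty is that $M_Y$ is only NQC and not effective, so it cannot simply be absorbed into the boundary and the decomposition must be built directly at the level of the generalised log canonical class $K_Y+B_Y+M_Y$ rather than deduced from one for $K_Y$ alone. Once such a decomposition is in place, it pushes down to $X$ as explained and Theorem \ref{thm:mainthm}(a) delivers the termination of an MMP with scaling, producing the desired minimal model of $(X,B+M)$ over $Z$.
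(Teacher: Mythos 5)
Your overall reduction is sound: in the non-pseudoeffective case Theorem~\ref{thm:mainthm}(b) applies directly, and in the pseudoeffective case it would suffice by Theorem~\ref{thm:mainthm}(a) to produce an NQC weak Zariski decomposition of $K_X+B+M$ over $Z$. You also correctly observe that the dimension-$n$ assumption subsumes the dimension-$(n-1)$ one required by Theorem~\ref{thm:mainthm}. However, you then explicitly flag the construction of the weak Zariski decomposition for the log smooth NQC lc g-pair $(Y,B_Y+M_Y)$ as an unresolved ``main obstacle,'' and your proposal never fills it: you do not explain how to extract a decomposition of $K_Y+B_Y+M_Y$ from the hypothesis on smooth $n$-folds, and you correctly note that $M_Y$ cannot be absorbed into the boundary, so the decomposition cannot simply be read off from one for $K_Y$. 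As written, the proof does not close; this is a genuine gap.

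The paper's argument is a two-line citation and rests on a tool you have available but do not invoke: Corollary~\ref{cor:MM_bigger_boundary_g}, itself a consequence of Theorem~\ref{thm:mainthm}. That corollary is designed precisely to overcome the difficulty you identify -- it allows one to \emph{enlarge} both the boundary part and the nef part, so that it suffices to have an NQC weak Zariski decomposition for a \emph{smaller} pair on the resolution, rather than for the full generalised log canonical class $K_Y+B_Y+M_Y$. The existence of that smaller decomposition is exactly what \cite[Lemma~2.15]{LT22} provides from the assumed existence of minimal models for smooth $n$-folds. So the correct chain is: pass to a log resolution as you do, invoke \cite[Lemma~2.15]{LT22} to get a weak Zariski decomposition for a smaller pair, and then apply Corollary~\ref{cor:MM_bigger_boundary_g} to obtain the minimal model of $(X,B+M)$. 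Attempting to build the decomposition directly for $K_Y+B_Y+M_Y$, as you propose, is essentially the obstacle that Corollary~\ref{cor:MM_bigger_boundary_g} is there to bypass.
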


	We also have the following corollary in low dimensions, cf.\ \cite[Corollary D]{LT22}.
	
	\begin{cor}\label{cor:maincor}
		Let $ (X/Z,B+M) $ be a $ \Q $-factorial NQC log canonical generalised pair of dimension $ n $. Then the following statements hold:
		\begin{enumerate}
			\item If $n \leq 4$ and $ K_X+B+M$ is pseudoeffective over $ Z $, then $ (X,B+M) $ has a minimal model over $Z$.
			
			\item If $n \leq 5$, $ K_X+B+M$ is pseudoeffective over $ Z $ and a general fibre of the morphism $ X\to Z $ is uniruled, then $ (X,B+M) $ has a minimal model over $Z$.
		\end{enumerate}
	\end{cor}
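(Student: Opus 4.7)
The plan is to deduce both statements from Theorem~\ref{thm:mainthm}(a), applied to $(X/Z,B+M)$ together with a suitable scaling divisor of the form $P=t_0H$, $N=0$, where $H$ is a very ample $\Q$-Cartier divisor over $Z$ and $t_0>0$ is chosen as the nef threshold so that $K_X+B+M+t_0H$ is nef over $Z$. Since $K_X+B+M$ is pseudoeffective over $Z$ in both parts, the Mori fibre space alternative in the conclusion of the theorem is excluded, so only the minimal model alternative can occur. Hence it suffices to verify the two hypotheses of the theorem: the existence of minimal models for smooth varieties of dimension $n-1$, and the existence of an NQC weak Zariski decomposition of $(X,B+M)$ over $Z$.

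For part~(1) with $n\le 4$, the assumption on minimal models for smooth threefolds is classical. To produce the NQC weak Zariski decomposition, I would pass to a log resolution $\pi\colon Y\to X$ on which $\pi^{*}M$ is already a strict NQC sum $\sum t_iM_i$ of nef $\Q$-Cartier divisors $M_i$. Absorbing the $M_i$ into the boundary yields a usual $\Q$-factorial log canonical pair on $Y$ of dimension at most $4$ with pseudoeffective log canonical divisor, to which the pair-theoretic statement \cite[Corollary~D]{LT22} applies, producing a minimal model and hence a weak Zariski decomposition. Regrouping the $M_i$ as the nef part and pushing down to $X$ then yields the required NQC weak Zariski decomposition on $(X,B+M)$.

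Part~(2), where $n\le 5$ and the general fibre of $X\to Z$ is uniruled, is more delicate because the existence of minimal models for smooth fourfolds is not known unconditionally. The plan is to exploit the uniruledness of the general fibre $F$: since a uniruled smooth variety has non-pseudoeffective canonical class, but $(K_X+B+M)|_F$ is pseudoeffective, the divisor $(B+M)|_F$ carries enough positivity that, after passing to a log resolution and taking an MRC-type relative fibration of $X$ over $Z$, the lower-dimensional MMPs entering the proof of Theorem~\ref{thm:mainthm} take place in effective dimension at most $4$, where part~(1) supplies both the required minimal models and the NQC weak Zariski decompositions. Combining these fibrewise inputs with the relative structure yields the two hypotheses of Theorem~\ref{thm:mainthm} in ambient dimension at most~$5$.

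The main obstacle I anticipate in both parts is the careful bookkeeping of the NQC condition and of the pushforward hypothesis on the scaling divisor throughout the birational reductions, in particular the compatibility between the nef part, the boundary, and the auxiliary divisor under pullback to $Y$ and subsequent pushforward back to $X$. A subtlety specific to part~(2) is ensuring that the uniruled-fibre reduction produces a weak Zariski decomposition over $Z$ itself, rather than merely over the base of the MRC-type fibration; this likely requires combining the resulting MMP on $X$ over the auxiliary base with an MMP of that base over $Z$ into a single MMP on $X$ over $Z$ with scaling.
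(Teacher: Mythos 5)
Your overall strategy of reducing to Theorem~\ref{thm:mainthm} (and its conclusion that the Mori fibre space alternative is excluded when $K_X+B+M$ is pseudoeffective) is sound in spirit, but both parts contain gaps that depart substantially from the paper's actual argument.

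For part~(1), the key step in your plan is to pass to a log resolution, write the pullback of $M$ as an NQC sum $\sum t_i M_i$, and ``absorb the $M_i$ into the boundary'' to obtain a log canonical \emph{pair}. This does not work: the $M_i$ are nef $\Q$-Cartier divisors, not effective $\R$-divisors, and there is no reason an effective representative $\Q$-linearly equivalent to $M'$ should exist or should produce a log canonical pair (the usual trick via \cite[Lemma~3.5]{HanLi18} requires $(X,0)$ klt and the nef part to be big over the base). The paper sidesteps this entirely: it first notes that minimal models of smooth (equivalently, terminal) $4$-folds over $Z$ exist by \cite[Theorem~5-1-15]{KMM87}, and then feeds this into \cite[Theorem~E]{LT22} together with Remark~\ref{rem:MMimplWZD} to obtain NQC weak Zariski decompositions directly for pseudoeffective NQC log canonical g-pairs of dimension~$4$, with no reduction to usual pairs. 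From there hypothesis~(a) of Theorem~\ref{thm:mainthm} is satisfied and part~(1) follows.

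For part~(2), your MRC-type fibration argument is essentially an attempt to re-prove Theorem~\ref{thm:MM_uniruled_g} from scratch, and it is too vague to be checkable as written (in particular, the claim that the relevant lower-dimensional MMPs ``take place in effective dimension at most~$4$'' is not justified, nor is the recombination of the fibrewise MMPs with an MMP on the base). The paper's route is much shorter: $n\le 5$ means the hypothesis ``existence of minimal models for smooth varieties of dimension $n-1\le 4$'' in Theorem~\ref{thm:MM_uniruled_g} is supplied by the same \cite[Theorem~5-1-15]{KMM87} already invoked for part~(1), and the uniruled case then follows by citing Theorem~\ref{thm:MM_uniruled_g} directly rather than Theorem~\ref{thm:mainthm}. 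The essential insight you miss is that the classical terminal $4$-fold MMP provides exactly the $(n-1)$-dimensional input needed for $n=5$, so no new geometry of the MRC fibration is required at the level of this corollary; that geometric work is already packaged inside Theorem~\ref{thm:MM_uniruled_g}, which in turn relies on \cite[Theorem~4.3]{LT22}.
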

	
	Part (i) of Corollary \ref{cor:maincor} was already shown in \cite[Theorem 1.5]{HaconLiu21} as a consequence of \cite[Theorems 1.1 and 1.2]{CT20}. Part (ii) of Corollary \ref{cor:maincor} is a special case of the following generalisation of \cite[Theorems C and 4.3]{LT22}.
	
	\begin{thm}\label{thm:MM_uniruled_g}
		Assume the existence of minimal models for smooth varieties of dimension $n-1$.
		
		Let $ (X/Z,B+M) $ be a pseudoeffective $\Q$-factorial NQC log canonical generalised pair of dimension $ n $ such that a general fibre of the morphism $ X\to Z $ is uniruled. Then $(X,B+M)$ has a minimal model over $Z$.
	\end{thm}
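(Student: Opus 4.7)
By Theorem \ref{thm:mainthm}(a), it is enough to construct an NQC weak Zariski decomposition of $K_X + B + M$ over $Z$, after which Theorem \ref{thm:mainthm} produces the sought minimal model. My plan is to leverage the uniruledness of a general fibre of $f \colon X \to Z$ in order to reduce to a lower-dimensional base, where the hypothesis on smooth $(n-1)$-dimensional varieties becomes usable, in the spirit of \cite[Theorem 4.3]{LT22}.

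First I would pass to a smooth log resolution $p \colon W \to X$ and form the relative maximal rationally connected quotient of $X$ over $Z$. Since a general fibre of $f$ is uniruled, after further blowing up if needed one obtains a morphism $q \colon W \to Y$ over $Z$ whose general fibre is rationally connected and with $\dim Y \leq n-1$. I would then run a suitable relative MMP to place oneself in a position where a canonical bundle formula for NQC log canonical generalised pairs applies to $q$, following \cite{HanLiu20}. This should yield an NQC log canonical generalised pair $(Y/Z, B_Y + M_Y)$ of dimension at most $n-1$ such that $K_Y + B_Y + M_Y$ is pseudoeffective over $Z$ and such that $p^*(K_X + B + M)$ and $q^*(K_Y + B_Y + M_Y)$ differ by an effective, $p$-contractible correction.

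Given such a reduction, the hypothesis on smooth varieties of dimension $n-1$, together with Corollary \ref{cor:MM_impl_g} applied in dimension $\dim Y$, yields a minimal model of $(Y/Z, B_Y + M_Y)$ over $Z$. By Remark \ref{rem:MMimplWZD}, this is equivalent to the existence of an NQC weak Zariski decomposition of $K_Y + B_Y + M_Y$ over $Z$. Pulling this decomposition back via $q$ and pushing it forward via $p$, while absorbing the exceptional correction into the effective part, produces the desired NQC weak Zariski decomposition of $K_X + B + M$ over $Z$. Invoking Theorem \ref{thm:mainthm}(a) then concludes the proof.

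The main obstacle is the canonical bundle formula step: one must verify that the output preserves both the log canonical and NQC conditions for generalised pairs in the relative setting, and that the weak Zariski decomposition on $Y$ transfers cleanly through $q$ and $p$ without breaking the NQC property of the nef part. A secondary technical point is to confirm that the pseudoeffectivity of $K_X + B + M$ over $Z$ actually forces pseudoeffectivity of $K_Y + B_Y + M_Y$ over $Z$, which should follow from the rationally connected nature of the general fibre of $q$ combined with the relative pseudoeffectivity of $K_X + B + M$.
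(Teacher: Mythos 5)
The paper's proof is a one-liner: by \cite[Theorem 4.3]{LT22} the g-pair $(X,B+M)$ has a minimal model in the sense of Birkar-Shokurov over $Z$, then Remark \ref{rem:MMimplWZD} gives an NQC weak Zariski decomposition, and Theorem \ref{thm:mainthm} concludes. Your proposal instead attempts to re-derive something like \cite[Theorem 4.3]{LT22} from scratch via the relative MRC quotient, and this is where a genuine gap appears.

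The key step you cannot justify is \enquote{run a suitable relative MMP to place oneself in a position where a canonical bundle formula applies to $q$.} The canonical bundle formula for generalised pairs (as in \cite{Fil20,HanLiu21} and as it is invoked in Step 4 of Theorem \ref{thm:WZD_existence_g}) requires the adjoint divisor to be $\R$-linearly trivial over the base of the fibration. The MRC quotient $q\colon W\to Y$ gives rationally connected fibres, but there is no reason whatsoever for $K_W+B_W+M_W$ (or its pullback from $X$) to restrict to a numerically trivial, or even pseudoeffective, divisor on a general fibre of $q$. Running an MMP over $Y$ does not repair this: if $K_W+B_W+M_W$ is not pseudoeffective over $Y$ you would obtain a Mori fibre space whose base is typically some intermediate variety rather than $Y$ itself, while if it is pseudoeffective over $Y$ you would obtain a minimal model over $Y$, which is not the desired relatively trivial situation; and in either case you would have to justify termination of that MMP in dimension $n$, which you do not have.

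The way uniruledness is actually exploited in this circle of ideas is quite different from your sketch: it is used only to conclude that $K_X$ is \emph{not} pseudoeffective over $Z$ (via the characterisation of uniruledness by non-pseudoeffectivity of the canonical class on a very general fibre). Combined with the hypothesis that $K_X+B+M$ \emph{is} pseudoeffective over $Z$, this sets up a pseudoeffective-threshold situation: one passes to a dlt blowup, considers the threshold $\tau$ at which $K_X+\tau(B+M)$ becomes pseudoeffective over $Z$, and then runs an MMP towards a Mori fibre space for the slightly-below-threshold g-pair; on the base of that Mori fibre space the adjoint divisor really is $\R$-trivial, which is where the canonical bundle formula enters. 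This is exactly the mechanism of Theorem \ref{thm:WZD_existence_g} and Lemma \ref{lem:MM_existence_HH19}, and it is entirely absent from your proposal. If you wish to give an argument independent of citing \cite[Theorem 4.3]{LT22}, the clean route is: deduce from uniruledness that $K_X$ is not pseudoeffective over $Z$, apply Lemma \ref{lem:MM_existence_HH19} with the roles $(B,M,N,P):=(0,0,B,M)$ to obtain a minimal model in the sense of Birkar-Shokurov, then finish via Remark \ref{rem:MMimplWZD} and Theorem \ref{thm:mainthm}.
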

	
	In our previous work \cite{LT22} we obtained similar statements to Corollary \ref{cor:MM_impl_g} and Theorem \ref{thm:MM_uniruled_g} when additionally the underlying variety of the given generalised pair has klt singularities. The main reason for this additional assumption was that, in its presence, one can reduce several foundational results in the geometry of generalised pairs, such as the existence of surgery operations in the MMP, to statements about usual pairs, see \cite{BZ16,HanLi18}. These foundational results (the Cone and Contraction theorems, the existence of divisorial contractions and flips) were very recently established in \cite{HaconLiu21} for $\Q$-factorial NQC log canonical generalised pairs.
	
	Finally, we obtain the following generalisation of \cite[Corollary 1.6]{Bir12b}.
	
	\begin{cor}
		\label{cor:MM_bigger_boundary_g}
		Assume the existence of minimal models for smooth varieties of dimension $n-1$.
		
		Let $(X/Z,B_1+M_1)$ and $(X/Z,B_2+M_2)$ be $ \Q $-factorial NQC log canonical generalised pairs of dimension $n$ such that $B_2 \geq B_1 $ and $ M_2 - M_1 $ is the pushforward of an NQC divisor on some higher model of $ X $. If $(X,B_1+M_1)$ admits an NQC weak Zariski decomposition over $Z$, then $(X,B_2+M_2)$ has a minimal model over $Z$.
	\end{cor}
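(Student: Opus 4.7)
The plan is to deduce the corollary from Theorem~\ref{thm:mainthm}(a) by showing that the generalised pair $(X, B_2+M_2)$ itself admits an NQC weak Zariski decomposition over $Z$. Once that is established, $K_X+B_2+M_2$ is pseudoeffective over $Z$, so the MMP furnished by Theorem~\ref{thm:mainthm} cannot terminate with a Mori fibre space and must produce the sought minimal model.

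First I would exploit the $\Q$-factoriality of $X$: although the generalised pair structure only requires the sums $K_X+B_i+M_i$ to be $\R$-Cartier, $\Q$-factoriality ensures that the differences $B_2-B_1\geq 0$ and $M_2-M_1$ are themselves $\R$-Cartier on $X$. Then I would pass to a sufficiently high birational model $\pi\colon Y\to X$ on which both the given NQC weak Zariski decomposition of $(X,B_1+M_1)$ is realised, say $\pi^*(K_X+B_1+M_1)=P_1+N_1$ with $P_1$ NQC and $N_1\geq 0$, and an NQC divisor $M^Y$ on $Y$ with $\pi_*M^Y=M_2-M_1$ is defined; such a $Y$ exists because both pieces of data live on high enough birational models by assumption.

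The core step is to assemble these ingredients into a decomposition of $\pi^*(K_X+B_2+M_2)$. Since $M^Y$ is nef on $Y$ and $\pi^*(M_2-M_1)$ is $\pi$-trivial, the $\pi$-exceptional $\R$-Cartier divisor $\pi^*(M_2-M_1)-M^Y$ has $\pi$-nef negative, so the negativity lemma gives $\pi^*(M_2-M_1)-M^Y\geq 0$. Analogously, since $B_2-B_1\geq 0$ is $\R$-Cartier, $\pi^*(B_2-B_1)-\pi^{-1}_*(B_2-B_1)\geq 0$. Adding these two relations to $\pi^*(K_X+B_1+M_1)=P_1+N_1$ yields
\[
\pi^*(K_X+B_2+M_2) \;=\; (P_1+M^Y) \;+\; \bigl(N_1+\pi^{-1}_*(B_2-B_1)+F\bigr),
\]
where $F\geq 0$ collects the two effective exceptional contributions. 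The first summand is NQC and the second is effective, so this is an NQC weak Zariski decomposition of $(X,B_2+M_2)$ over $Z$, and Theorem~\ref{thm:mainthm} applied to it produces a minimal model over $Z$.

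The step I expect to require the most care is keeping track of which divisors are genuinely $\R$-Cartier, so that the pullbacks $\pi^*(B_2-B_1)$ and $\pi^*(M_2-M_1)$ make sense and the negativity lemma is applicable to $\pi^*(M_2-M_1)-M^Y$; this is precisely where the $\Q$-factoriality hypothesis on $X$ plays its crucial role. Once those points are nailed down, the remainder of the argument is a routine bookkeeping exercise followed by a direct appeal to the main theorem.
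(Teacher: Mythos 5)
Your argument is correct and follows essentially the same route as the paper: the paper proves Corollary \ref{cor:MM_bigger_boundary_g} by referring back to the proof of Lemma \ref{lem:MM_bigger_boundary_g}, which assembles an NQC weak Zariski decomposition of $K_X+B_2+M_2$ on a common high model exactly as you do (pull back the given decomposition of $K_X+B_1+M_1$, add $M^Y$ to the NQC part, and add $\pi^*(B_2-B_1)$ plus the effective exceptional correction from the Negativity lemma to the effective part), and then invokes Theorem \ref{thm:mainthm} in place of the corresponding statement from \cite{LT22}. The only cosmetic differences are that the paper writes the effective piece as $N+f^*G+E$ rather than splitting off $\pi^{-1}_*(B_2-B_1)$, and that $M^Y$ is only nef over $Z$ (not absolutely nef), which is all the Negativity lemma requires.
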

	
	We now outline the main ingredients in the proof of Theorem \ref{thm:mainthm}. First, under some mild assumptions in lower dimensions, we prove the existence of NQC weak Zariski decompositions for generalised pairs satisfying additional properties by using essentially the same line of arguments as in the proof of \cite[Theorem 3.2]{LT22}, see Theorem \ref{thm:WZD_existence_g}. In these arguments, we use the MMP in order to find an appropriate birational model on which we can \enquote{lift} an NQC weak Zariski decomposition from a lower-dimensional variety by the canonical bundle formula for generalised pairs \cite{Fil20,HanLiu21}. Second, in Theorem \ref{thm:HL18_term_with_scaling} we generalise \cite[Theorem 4.1(iii)]{Bir12a} and \cite[Theorem 4.1]{HanLi18} to $\Q$-factorial NQC log canonical generalised pairs, namely we show that under reasonable assumptions every MMP with scaling terminates. Next, we employ this result in Section \ref{section:Proofs} to demonstrate that the existence of minimal models in the sense of Birkar-Shokurov implies the termination of flips with scaling of ample divisors in the category of generalised pairs, following the same strategy as in \cite{HH20}, but using crucially Theorems \ref{thm:WZD_existence_g} and \ref{thm:HL18_term_with_scaling}. Here, the central beautiful idea from \cite{HH20} is to a priori choose carefully an ample divisor with which one runs the MMP with scaling. All these results, together with \cite[Theorem 4.4]{LT22}, which plays a fundamental role in this paper, allow us now to deduce Theorem \ref{thm:mainthm}.

	\section{Preliminaries}
	\label{section:Preliminaries}
	
	Throughout the paper we work over the field $ \C $ of complex numbers. Unless otherwise stated, we assume that varieties are normal and quasi-projective and that a variety $X$ over a variety $Z$, denoted by $X/Z$, is projective over $Z$. We often quote in the paper the Negativity lemma \cite[Lemma 3.39(1)]{KM98}.
	
	A \emph{fibration} is a projective surjective morphism with connected fibres. A \emph{birational contraction} is a birational map whose inverse does not contract any divisors.
	
	Let $ \pi \colon X\to Z $ be a projective morphism between normal varieties and let $ D $ be an $ \R $-Cartier $ \R $-divisor on $ X $. The divisor $ D $ is \emph{pseudoeffective over $ Z $} if it is pseudoeffective on a very general fibre of $ \pi $, and $ D $ is an \emph{NQC divisor (over $ Z $)} if it is a non-negative linear combination of $\Q$-Cartier divisors on $X$ which are nef over $ Z $; the acronym NQC stands for \emph{nef $\Q$-Cartier combinations} \cite{HanLi18}. Two $ \R $-Cartier $ \R $-divisors $ D_1 $ and $ D_2 $ on $ X $ are \emph{$\R$-linearly equivalent over $ Z $}, denoted by $ D_1 \sim_{\R,Z} D_2 $, if there exists an $\R$-Cartier $\R$-divisor $G$ on $Z$ such that $D_1\sim_\R D_2+\pi^*G$, and they are \emph{numerically equivalent over $ Z $}, denoted by $ D_1 \equiv_Z D_2 $, if $ D_1\cdot C = D_2 \cdot C $ for any curve $ C $ contained in a fibre of $ \pi $.

	Given a normal projective variety $ X $ and a pseudoeffective $\R$-Cartier $\R$-divisor $ D $ on $X$, we denote by $ \nu(X,D) $ \emph{the numerical dimension of $ D $}, see \cite[Chapter V]{Nak04}.

	Let $\varphi \colon X\dashrightarrow Y$ be a birational contraction between normal varieties, let $D$ be an $\R$-Cartier $\R$-divisor on $X$ and assume that $\varphi_*D$ is $\R$-Cartier. We say that the map $\varphi$ is \emph{$D$-nonpositive} (respectively \emph{$D$-negative}) if there exists a resolution of indeterminacies $(p,q)\colon W\to X\times Y$ of $\varphi$ such that $ W $ is smooth and  
	\[ p^*D\sim_\R q^* \varphi_*D + E , \]
	where $E$ is an effective $q$-exceptional $\R$-divisor (respectively $E$ is an effective  $q$-exceptional $\R$-divisor whose support contains the strict transform of every $\varphi$-exceptional prime divisor).

	\subsection{Generalised pairs}
	
	For the theory of usual pairs, their singularities and the Minimal Model Program (MMP) we refer to \cite{KM98,Fuj17}. For the analogous concepts and results in the setting of generalised pairs we refer to \cite{BZ16,HanLi18,CT20,LMT,HaconLiu21,LT22} and the relevant references therein.
	
	Here, we briefly recall the definitions of generalised pairs and their various classes of singularities as well as some basic properties that will be needed in this paper.
	
	\begin{dfn}
		A \emph{generalised pair}, abbreviated as \emph{g-pair}, consists of a normal variety $ X $, equipped with projective morphisms
		\[ X' \overset{f}{\longrightarrow} X \longrightarrow Z , \]
		where $ f $ is birational and $ X' $ is a normal variety, an effective $ \R $-divisor $ B $ on $X$, and an $\R$-Cartier $\R$-divisor $ M' $ on $X'$ which is nef over $Z$, such that $ K_X + B + M $ is $ \R $-Cartier, where $ M := f_* M' $. We say that the divisor $ B $, respectively $ M $, is the \emph{boundary part}, respectively the \emph{nef part}, of the g-pair. 
		
		Furthermore, we say that $ (X/Z,B+M) $ is \emph{NQC} if $M'$ is an NQC divisor on $ X' $, and that it is \emph{pseudoeffective over $Z$} if $ K_X + B + M $ is pseudoeffective over $Z $.
	\end{dfn}
	
	The variety $X'$ in the definition may always be chosen as a sufficiently high birational model of $ X $. Often in this paper we do not refer explicitly to the data of a g-pair and write simply $(X/Z,B+M)$, but remember the whole g-pair structure.
	
	We make the following convention throughout the paper: when we write $\big(X,(B+N)+(M+P)\big)$ for a $\Q$-factorial g-pair, then it is implicitly assumed that $(X,B+M)$ and $(X,N+P)$ are likewise g-pairs.
		
	\begin{dfn}
		Let $ (X,B+M) $ be a g-pair with data $ X' \overset{f}{\to} X \to Z $ and $ M' $. Let $ E $ be a divisorial valuation over $ X $; its \emph{centre} on $ X $ is denoted by $ c_X(E) $. We may assume that $ c_X(E) $ is a prime divisor on $ X' $. If we write 
		\[ K_{X'} + B' + M' \sim_\R f^* ( K_X + B + M ) \]
		for some $ \R $-divisor $ B' $ on $ X' $, then the \emph{discrepancy of $ E $ with respect to $ (X,B+M) $} is 
		\[ a(E, X, B+M) :=-\mult_E B' . \]
		
		We say that the g-pair $ (X,B+M) $ is:
		\begin{enumerate}
			\item[(a)] \emph{klt} if $  a(E,X,B+M) > -1 $ for any divisorial valuation $ E $ over $ X $,
			
			\item[(b)] \emph{log canonical} if $  a(E,X,B+M) \geq -1 $ for any divisorial valuation $ E $ over $ X $,
			
			\item[(c)] \emph{dlt} if it is log canonical and if there exists an open subset $U \subseteq X$ such that $(U,B|_U)$ is a log smooth pair, and if $ a(E,X,B+M) = -1 $ for some divisorial valuation $ E $ over $X$, then $ c_X(E) \cap U \neq \emptyset $ and $ c_X(E) \cap U $ is a log canonical centre of $(U,B|_U)$.
		\end{enumerate}
	\end{dfn}
	
	We have adopted the definition of dlt singularities from \cite{HanLi18}. 
	
	The following lemma generalises \cite[Corollaries 2.35(1) and 2.39(1)]{KM98} to the setting of g-pairs.
		
	\begin{lem}\label{lem:sing_smaller_boundary}
		If $\big(X/Z,(B+N)+(M+P)\big)$ is a $\Q$-factorial klt, respectively dlt, log canonical, g-pair, then the g-pair $(X,B+M)$ is also klt, respectively dlt, log canonical.
	\end{lem}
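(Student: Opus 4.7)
The plan is to express the discrepancies of the two g-pairs on a common high birational model and to show that passing from the bigger g-pair to the smaller one can only increase the discrepancies.

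First, I would fix a sufficiently high common birational model $f \colon X' \to X$ realising the data of both g-pairs, so that there are nef $\R$-Cartier $\R$-divisors $M'$ and $P'$ on $X'$ with $f_*M' = M$ and $f_*P' = P$, and such that $M' + P'$ serves as the nef part on $X'$ of $\bigl(X, (B+N)+(M+P)\bigr)$. Writing
\[ K_{X'} + B' + M' \sim_{\R} f^*(K_X + B + M) \quad \text{and} \quad K_{X'} + (B+N)' + M' + P' \sim_{\R} f^*(K_X + B + N + M + P) \]
and subtracting these identities yields
\[ (B+N)' - B' = f^*(N+P) - P' \]
as an equality of $\R$-divisors on $X'$; note that this makes sense because $N$ and $P$ are $\R$-Cartier, as $X$ is $\Q$-factorial.

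Next, I would apply the Negativity lemma \cite[Lemma 3.39(1)]{KM98} to $f^*P - P'$. Since $P'$ is nef over $Z$ and every fibre of $f$ is mapped to a point in $Z$, the divisor $P'$ is also nef over $X$; hence $P' - f^*P$ is $f$-nef with $f_*(P' - f^*P) = 0$, and the Negativity lemma gives $f^*P - P' \geq 0$. Combined with $f^*N \geq 0$, this yields $(B+N)' \geq B'$ on $X'$, so
\[ a(E, X, B+M) \geq a\bigl(E, X, (B+N)+(M+P)\bigr) \]
for every divisorial valuation $E$ over $X$. The klt and log canonical cases follow at once.

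For the dlt case, I would take the same open subset $U \subseteq X$ that witnesses the dlt structure of the bigger g-pair. The pair $(U, B|_U)$ is automatically log smooth, since $B \leq B+N$ and $(U, (B+N)|_U)$ is log smooth. If $E$ is a divisorial valuation over $X$ with $a(E, X, B+M) = -1$, the above inequality together with the log canonicity of the bigger g-pair forces $a\bigl(E, X, (B+N)+(M+P)\bigr) = -1$, so $c_X(E) \cap U$ is a nonempty lc centre of $\bigl(U, (B+N)|_U\bigr)$. The main obstacle is to upgrade this to an lc centre of $(U, B|_U)$: equality of the two discrepancies forces $\mult_E f^*N = 0$, and since $N$ is effective and $\R$-Cartier, this can only happen if $c_X(E) \not\subseteq \Supp(N)$. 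Consequently, none of the components of $\lfloor(B+N)|_U\rfloor$ cutting out $c_X(E)\cap U$ can lie in $\Supp(N|_U)$, so each of them has coefficient $1$ in $B|_U$, and hence $c_X(E) \cap U$ is an lc centre of $(U, B|_U)$.
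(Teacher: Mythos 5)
Your argument is correct and follows essentially the same path as the paper's proof: Negativity applied to the nef part to get $f^*P - P' \geq 0$, the resulting discrepancy comparison, and in the dlt case the observation that equality of discrepancies forces $\mult_E f^*N = 0$, hence $c_X(E) \nsubseteq \Supp N$, which upgrades the lc centre from $\big(U,(B+N)|_U\big)$ to $(U,B|_U)$. The only cosmetic difference is that you reprove the klt and log canonical cases from scratch, whereas the paper simply cites \cite[Lemma 2.6]{CT20} for those and only writes out the dlt case.
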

	
	\begin{proof}
		If $ \big(X,(B+N)+(M+P)\big) $ is klt or log canonical, then the statement was shown in \cite[Lemma 2.6]{CT20}. We may thus assume that the pair is dlt. By definition there exists an open subset $ U \subseteq X $ such that $\big(U,(B+N)|_U\big)$ is a log smooth pair, and if $ a\big(E,X,(B+N)+(M+P)\big) = -1 $ for some divisorial valuation $ E $ over $X$, then $ c_X(E) \cap U \neq \emptyset $ and $ c_X(E) \cap U $ is a log canonical centre of $\big(U,(B+N)|_U\big)$. 
		
		Note that the g-pair $(X,B+M)$ is log canonical by the above. Fix a divisorial valuation $E$ over $X$ with 
		$$ a(E,X,B+M) = -1 .$$
		It suffices to show that $ c_X(E) \cap U \neq \emptyset $ and that $ c_X(E) \cap U $ is a log canonical centre of $(U,B|_U)$. 
		
		Pick a sufficiently high log resolution $f\colon X'\to X$ of $(X,B+N)$ such that there exist $\R$-divisors $M'$ and $P'$ on $X'$ which are nef over $Z$ with $M=f_*M'$ and $P=f_*P'$, and such that $c_{X'}(E)$ is a divisor. By the Negativity lemma we have
		$$ f^* P = P' + F , $$
		where $ F $ is an effective $ f $-exceptional $\R$-divisor, and an easy calculation then gives
		\begin{align*}
		-1 &= a(E,X,B+M) \\
		&= a\big(E, X, (B+N) + (M+P)\big) + \mult_E \big( f^* N + F \big) \geq -1.
		\end{align*}
		This implies 
		$$ a\big(E, X, (B+N) + (M+P)\big) = -1 \quad \text{and}\quad  c_X(E) \nsubseteq \Supp N .$$
		The first relation shows that $ c_X(E) \cap U \neq \emptyset $ is a log canonical centre of $\big(U,(B+N)|_U\big)$ by the definition of $U$, hence it is a log canonical centre of $(U,B|_U)$ by the second relation, as desired.
	\end{proof}
	
	The next result is \cite[Proposition 3.9]{HanLi18} and will be used several times in the paper without explicit mention.
	
	\begin{lem}
		Let $(X,B+M)$ be a log canonical g-pair with data $ \widetilde X \overset{f}{\to} X \to Z $ and $ \widetilde M $. Then, after possibly replacing $f$ with a higher model, there exist a $\Q$-factorial dlt g-pair $(X',B'+M')$ with data $ \widetilde X \overset{g}{\to} X' \to Z $ and $ \widetilde M $, and a projective birational morphism $h \colon X' \to X$ such that 
		\[ K_{X'} + B' + M' \sim_\R h^*(K_X + B +M)\quad\text{and}\quad B' = h_*^{-1} B + E , \]
		where $ E $ is the sum of the $ h $-exceptional prime divisors. The g-pair $(X',B'+M')$ is called a \emph{dlt blowup} of $(X,B+M)$.
	\end{lem}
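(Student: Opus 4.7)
The plan is to follow the classical construction of a dlt modification and adapt it to the generalised setting.

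First, after replacing $f \colon \widetilde X \to X$ by a higher model, I may assume that $\widetilde X$ is smooth and that the union of $f_*^{-1} B$ with $\Exc(f)$ is a simple normal crossings divisor. Denote by $E_1,\dots,E_r$ the $f$-exceptional prime divisors and set
\[ \Delta := f_*^{-1} B + \sum_{i=1}^r E_i. \]
Since $(X,B+M)$ is log canonical, $a(E_i,X,B+M) \geq -1$ for every $i$, and so
\[ K_{\widetilde X} + \Delta + \widetilde M \sim_\R f^*(K_X+B+M) + F, \]
where $F := \sum_{i=1}^r \bigl(1+a(E_i,X,B+M)\bigr) E_i$ is an effective $f$-exceptional $\R$-divisor. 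In particular, the g-pair $(\widetilde X, \Delta + \widetilde M)$ is $\Q$-factorial dlt, since $\Delta$ has SNC support on a smooth variety and $\widetilde M$ is nef over $Z$.

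Next I would run a $(K_{\widetilde X} + \Delta + \widetilde M)$-MMP over $X$ with scaling of an ample divisor, invoking the MMP for $\Q$-factorial dlt log canonical g-pairs recalled from \cite{HaconLiu21}. Since $K_{\widetilde X} + \Delta + \widetilde M$ is $\R$-linearly equivalent over $X$ to the effective $f$-exceptional divisor $F$, a special termination argument ensures that this MMP terminates with a model $g \colon \widetilde X \dashrightarrow X'$ admitting a projective birational morphism $h \colon X' \to X$ with $h \circ g = f$. Setting $B' := g_* \Delta$ and $M' := g_* \widetilde M$, the divisor $K_{X'} + B' + M'$ is nef over $X$ and is $\R$-linearly equivalent over $X$ to the effective $h$-exceptional divisor $g_* F$; hence, by the Negativity lemma, $g_* F = 0$. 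Thus every component of $F$ is contracted by $g$, and the divisors remaining exceptional for $h$ are precisely the $E_i$ with $a(E_i,X,B+M) = -1$, all of which appear with coefficient $1$ in $\Delta$. This yields both $K_{X'} + B' + M' \sim_\R h^*(K_X+B+M)$ and $B' = h_*^{-1} B + E$, where $E$ is the sum of the $h$-exceptional prime divisors. The dlt property is preserved along the steps of the MMP, so $(X',B'+M')$ is $\Q$-factorial dlt as required.

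The main difficulty lies in justifying the existence and termination of the $(K_{\widetilde X} + \Delta + \widetilde M)$-MMP over $X$. For ordinary log canonical pairs this is classical via special termination of flips, but in the generalised setting the presence of a nontrivial nef part $\widetilde M$ forces one either to rely on the recent MMP foundations for dlt log canonical g-pairs from \cite{HaconLiu21}, or to use a perturbation trick absorbing (part of) $\widetilde M$ into an auxiliary boundary to reduce to a klt/dlt pair situation. Some care is needed to avoid circularity with the broader themes of the paper, since termination of flips for log canonical g-pairs is itself delicate; here the argument succeeds because \emph{relative} termination over $X$ for a divisor which is effective and exceptional over $X$ is considerably softer than unconditional termination.
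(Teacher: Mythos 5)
The paper does not prove this lemma itself; it is quoted verbatim from \cite[Proposition~3.9]{HanLi18}, so there is no in-house argument to compare against. Your reconstruction follows the standard route that source takes: pass to a log resolution, put every $f$-exceptional prime divisor into the boundary with coefficient one, and run a relative MMP over $X$ to contract the effective $f$-exceptional excess $F = \sum\bigl(1+a(E_i,X,B+M)\bigr)E_i$. The bookkeeping is correct: on a minimal model over $X$ the Negativity lemma forces $g_*F=0$; the MMP can only contract components of $F$, since $F$ is effective and $f$-exceptional, hence equal to its own relative negative part $N_\sigma(F;\widetilde X/X)$; and the surviving $h$-exceptional divisors are exactly the $E_i$ with $a(E_i,X,B+M)=-1$, all carrying coefficient one in $\Delta$, giving $B'=h_*^{-1}B+E$.

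The genuine gap, which you acknowledge, is termination of the relative $(K_{\widetilde X}+\Delta+\widetilde M)$-MMP over $X$. Of your two proposed remedies, the first --- invoking the log canonical MMP foundations of \cite{HaconLiu21} --- is circular for the purposes of this lemma: that machinery relies on dlt blowups as a basic tool and itself quotes \cite{HanLi18} for this very statement. The second remedy (perturbation) is the right one, but you should make explicit the fact that enables it: since $f\colon\widetilde X\to X$ is birational, $\widetilde M$ is automatically nef \emph{and big} over $X$, so on the smooth (hence klt) variety $\widetilde X$ the nef part can be traded, up to $\sim_{\R,X}$, for a big klt boundary via \cite[Lemma~4.4]{BZ16} (in the guise of \cite[Lemma~3.5]{HanLi18} in the relative birational setting). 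This reduces the relative MMP to one governed by \cite{BCHM10}, which terminates. With that observation the argument is self-contained, resting only on \cite{BZ16} and \cite{BCHM10}, and entirely independent of the later log canonical technology of \cite{HaconLiu21}; without it, ``a special termination argument ensures termination'' is an assertion rather than a proof.
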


	\subsection{Minimal models and Mori fibre spaces}
	
	We recall the definitions of minimal models \emph{in the usual sense} and \emph{in the sense of Birkar-Shokurov}, as well as the definition of Mori fibre spaces.
	
	\begin{dfn}\label{dfn:1}
		Assume that we have a birational map $\varphi \colon X \dashrightarrow X'$ over $ Z $ and g-pairs $(X/Z,B+M)$ and $(X'/Z,B'+M')$ such that $(X,B+M)$ is log canonical and the divisors $ M $ and $ M' $ are pushforwards of the same nef $\R$-divisor on a common birational model of $X$ and $X'$. 
		\begin{enumerate}[(a)]
			\item The map $\varphi$ is a \emph{minimal model in the sense of Birkar-Shokurov over $Z$} of the g-pair $(X,B+M)$ if $ B' =\varphi_*B+E$, where $E$ is the sum of all prime divisors which are contracted by $\varphi^{-1}$, if $X'$ is $\Q$-factorial, if the divisor $K_{X'}+B'+M'$ is nef over $Z$ and if
			$$a(F,X,B+M) < a(F,X',B'+M')$$
			for any prime divisor $ F $ on $ X $ which is contracted by $\varphi $. Note that the g-pair $ (X',B'+M')$ is log canonical by \cite[Lemma 2.8(i)]{LMT}.
			
			If, moreover, the map $\varphi$ is a birational contraction, but $X'$ is not necessarily $\Q$-factorial if $X$ is not $\Q$-factorial (and $X'$ is $\Q$-factorial if $X$ is $\Q$-factorial), then $\varphi$ is a \emph{minimal model of $(X,B+M)$ over $ Z $}.
			
			\item If the map $\varphi$ is a birational contraction, if $ B' = \varphi_* B $, if $ K_{X'} + B' + M' $ is ample over $Z$ and if 
			$$a(F,X,B+M)\leq a(F,X',B'+M')$$
			for every $\varphi$-exceptional prime divisor $F$ on $X$, then $(X',B'+M')$ is a \emph{canonical model of $(X,B+M)$ over $ Z $}. Note that the g-pair $ (X',B'+M')$ is log canonical by \cite[Lemma 2.8(i)]{LMT} and unique up to isomorphism by \cite[Lemma 2.12]{LMT}.
			
			\item If the map $\varphi$ is a birational contraction, if $ B' = \varphi_* B $, if $X'$ is $\Q$-factorial when $X$ is $\Q$-factorial, if there exists a $ (K_{X'} + B' + M') $-negative extremal contraction $ X' \to T $ over $ Z $ with $ \dim X' > \dim T $, if for any divisorial valuation $ F $ over $ X $ we have 
			\[ a(F,X,B+M) \leq a(F,X',B'+M') , \]
			and the strict inequality holds if $ c_X(F) $ is a $ \varphi $-exceptional prime divisor, then $ (X',B'+M') $ is a \emph{Mori fibre space of $ (X,B+M) $ over $ Z $}.
		\end{enumerate}		
	\end{dfn}

	\begin{rem}\label{rem:1}
		For the differences among these notions of a minimal model, see \cite[Subsection 2.2]{LT22}. We highlight that here we allow a minimal model in the sense of Birkar-Shokurov to be log canonical and not only dlt.
		
		We note here the following properties of minimal models in the sense of Birkar-Shokurov. In the notation from Definition \ref{dfn:1}(a), for every resolution of indeterminacies $(p,q)\colon W \to X\times X'$ of the map $\varphi$, the divisor 
		$$ p^*(K_{X}+B+M) - q^*(K_{X'}+B'+M') $$
		is effective and $q$-exceptional. In particular, we have $a(F,X,B+M) \leq a(F,X',B'+M')$ for every geometric valuation $F$ over $X$ and 
		$$a(F,X,B+M) = a(F,X',B'+M') = -1$$
		for any geometric valuation $F$ over $X$ which is exceptional over $X$ but is not exceptional over $Y$. This is proved analogously as in  \cite[Remark 2.6]{Bir12a}.
		
		Furthermore, if $B_W$ is the sum of $p_*^{-1}B$ and of all $p$-exceptional prime divisors on $W$, then the divisor
		$$ K_W+B_W+M_W - p^*(K_X+B+M) $$
		is effective and $p$-exceptional since the g-pair $(X,B+M)$ is log canonical, where $M_W$ is the pushforward of the nef $\R$-divisor as in Definition \ref{dfn:1}. Moreover, it is also $q$-exceptional: indeed, if $F$ is a prime divisor on $W$ which is $p$-exceptional but not $q$-exceptional, then $a(F,X,B+M) = -1$ by the previous paragraph, so $F$ cannot be a component of $ K_W+B_W+M_W - p^*(K_X+B+M) $.
	\end{rem}

	\subsection{NQC weak Zariski decompositions}
	
	First, we recall the notion of an NQC weak Zariski decomposition, which plays a fundamental role in this paper. For further information we refer to \cite{Bir12b,HanLi18,LT22}.
	
	\begin{dfn}
		Let $ X \to Z $ be a projective morphism between normal varieties and let $ D $ be an $ \R $-Cartier $\R$-divisor on $ X $. An \emph{NQC weak Zariski decomposition of $ D $ over $Z$} consists of a projective birational morphism $ f \colon W \to X $ from a normal variety $ W $ and a numerical equivalence $ f^* D \equiv_Z P + N $, where $ P $ is an NQC divisor (over $ Z $) on $ W $ and $ N $ is an effective $\R$-Cartier $\R$-divisor on $W$.
		
		Moreover, if $ (X/Z,B+M) $ is an NQC g-pair, then we say that $ (X,B+M) $ \emph{admits an NQC weak Zariski decomposition over $Z$} if the divisor $ K_X + B + M $ admits an NQC weak Zariski decomposition over $Z$.
	\end{dfn}
	
	\begin{rem}\label{rem:MMimplWZD}~
		If an NQC log canonical g-pair $ (X,B+M) $ has a minimal model in the usual sense or in the sense of Birkar-Shokurov over $Z$, then it admits an NQC weak Zariski decomposition over $Z$ by (the proof of) \cite[Proposition 5.1]{HanLi18}.
	\end{rem}
	
	Next, we prove an easy corollary of \cite[Theorem 4.4]{LT22}, which was hinted at but not formulated in \cite{LT22} and which will be used crucially in the proof of Lemma \ref{lem:MM_existence_HH19}. Note that this result generalises \cite[Corollary 1.6]{Bir12b} to the setting of g-pairs with significantly weaker assumptions in lower dimensions.
	
	\begin{lem}
		\label{lem:MM_bigger_boundary_g}
		Assume the existence of minimal models for smooth varieties of dimension $n-1$.
		
		Let $(X/Z,B_1+M_1)$ and $(X/Z,B_2+M_2)$ be NQC log canonical g-pairs of dimension $n$ such that $B_2 \geq B_1 $ and $ M_2 - M_1 $ is the pushforward of an NQC divisor on some higher model of $ X $. If $(X,B_1+M_1)$ admits an NQC weak Zariski decomposition over $Z$, then $(X,B_2+M_2)$ has a minimal model in the sense of Birkar-Shokurov over $Z$.
	\end{lem}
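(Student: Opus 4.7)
The plan is to transfer the NQC weak Zariski decomposition from $(X,B_1+M_1)$ to $(X,B_2+M_2)$ and then invoke \cite[Theorem 4.4]{LT22}. Concretely, I would first choose a sufficiently high smooth birational model $\phi \colon W \to X$ dominating simultaneously both the model on which the given NQC weak Zariski decomposition of $K_X+B_1+M_1$ is defined and the higher model on which the hypothesis realises $M_2-M_1$ as the pushforward of an NQC divisor. After pulling everything back to $W$, I obtain
\[ \phi^*(K_X+B_1+M_1) \equiv_Z P + N, \]
with $P$ an NQC divisor on $W$ and $N \geq 0$, together with an NQC divisor $Q$ on $W$ satisfying $\phi_* Q = M_2-M_1$ (by the projection formula).

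Next I would expand
\[ \phi^*(K_X+B_2+M_2) = \phi^*(K_X+B_1+M_1) + \phi^*(B_2-B_1) + \phi^*(M_2-M_1) \]
and compare $\phi^*(M_2-M_1)$ with $Q$. The difference $E := \phi^*(M_2-M_1) - Q$ pushes forward to zero on $X$, so it is $\phi$-exceptional. Since $Q$, being NQC, is nef over $Z$ and hence in particular $\phi$-nef, while $\phi^*(M_2-M_1)$ is $\phi$-numerically trivial, the divisor $-E$ is $\phi$-nef. The Negativity lemma then forces $E \geq 0$, and combining this with $B_2-B_1 \geq 0$ yields
\[ \phi^*(K_X+B_2+M_2) \equiv_Z (P+Q) + \bigl( N + \phi^*(B_2-B_1) + E \bigr), \]
in which $P+Q$ is NQC and the parenthesised second summand is effective. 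This exhibits an NQC weak Zariski decomposition of $(X,B_2+M_2)$ over $Z$.

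Finally, I would invoke \cite[Theorem 4.4]{LT22}, which under the assumed existence of minimal models for smooth varieties of dimension $n-1$ produces a minimal model in the sense of Birkar-Shokurov for any $n$-dimensional NQC log canonical g-pair admitting an NQC weak Zariski decomposition over $Z$; applied to $(X,B_2+M_2)$ it gives the required minimal model. The only nontrivial step is the Negativity lemma argument establishing $E \geq 0$; the rest is routine bookkeeping of birational models and pullbacks.
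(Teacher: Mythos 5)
Your argument is essentially identical to the paper's proof: both pass to a common high model $W$ (resp.\ $X'$) dominating the models on which the NQC weak Zariski decomposition and the NQC representative of $M_2-M_1$ live, both apply the Negativity lemma to show that the discrepancy $E = \phi^*(M_2-M_1) - Q$ between the pullback and the NQC representative is effective and exceptional, and both then assemble $\phi^*(K_X+B_2+M_2) \equiv_Z (P+Q) + (N+\phi^*(B_2-B_1)+E)$ before invoking \cite[Theorem 4.4(i)]{LT22}. You merely expand the Negativity lemma step (noting $-E$ is $\phi$-nef since $Q$ is nef over $Z$ and $\phi^*(M_2-M_1)$ is $\phi$-trivial) which the paper leaves implicit; otherwise the two arguments coincide.
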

	
	\begin{proof}
		Let $ f \colon X' \to X $ be a high birational model such that there exists an NQC divisor $ Q' $ on $ X' $ with
		\begin{equation} \label{eq:1_bigger_boundary}
			Q := f_* Q' = M_2 - M_1 ,
		\end{equation}
		and such that
		\begin{equation} \label{eq:2_bigger_boundary}
			f^* (K_X + B_1 + M_1) \equiv_Z P + N ,
		\end{equation}
		where $ P $ is an NQC divisor on $ X' $ and $ N $ is an effective $\R$-Cartier $ \R $-divisor on $ X' $, see \cite[Remark 2.11]{LT22}. By the Negativity lemma we have
		\begin{equation} \label{eq:3_bigger_boundary}
			f^*Q = Q' + E ,
		\end{equation}
		where $ E $ is an effective $ f $-exceptional $ \R $-Cartier $ \R $-divisor on $ X' $. Moreover, since $ B_1 \leq B_2 $, we may write
		\begin{equation} \label{eq:4_bigger_boundary}
			B_2 = B_1 + G ,
		\end{equation}
		where $ G $ is an effective $ \R $-divisor on $ X $. Then \eqref{eq:1_bigger_boundary}, \eqref{eq:2_bigger_boundary},  \eqref{eq:3_bigger_boundary} and \eqref{eq:4_bigger_boundary} give
		\[ f^* (K_X + B_2 + M_2) \equiv_Z  (P+Q') + (N + f^*G + E) , \]
		which is an NQC weak Zariski decomposition of $ (X,B_2+M_2) $ over $Z$. We conclude by \cite[Theorem 4.4(i)]{LT22}.
	\end{proof}
	
	Finally, for a variant of Lemma \ref{lem:MM_bigger_boundary_g}, where we assume additionally that $ X $ is $ \Q $-factorial, but we obtain the stronger conclusion that $ (X,B_2+M_2) $ has a minimal model in the usual sense over $ Z $, see Corollary \ref{cor:MM_bigger_boundary_g}.

	\subsection{The MMP for generalised pairs}
	
	In this paper we use -- frequently without explicit mention -- the foundations of the Minimal Model Program for NQC $\Q$-factorial log canonical generalised pairs, as established recently in \cite{HaconLiu21}. We recall briefly the main results.
	
	Let $ (X/Z,B+M) $ be a $ \Q $-factorial NQC log canonical g-pair. First of all, the Cone and Contraction theorems for $ (X/Z,B+M) $ were established in \cite[Theorem 1.3]{HaconLiu21} and are analogous to the Cone theorem for usual pairs \cite[Theorem 4.5.2]{Fuj17}. Then, similarly as in the case of usual pairs, an extremal contraction can be either divisorial, flipping, or define a Mori fibre space structure. As shown in \cite[Subsection 5.5]{HaconLiu21}, the divisorial and Mori contractions behave well in any MMP; in particular, the Picard number drops after a divisorial contraction. Moreover, flips for $ \Q $-factorial NQC log canonical g-pairs exist by \cite[Theorem 1.2]{HaconLiu21}. All this implies that one may run a $ (K_X + B + M) $-MMP over $Z$, whose termination is not known in general. However, the paper \cite{CT20} establishes the termination of flips for $ \Q $-factorial NQC log canonical g-pairs of dimension $ 3 $ and for pseudoeffective $ \Q $-factorial NQC log canonical g-pairs of dimension $ 4 $, see also \cite{Mor18,HM20}. Note that $\Q$-factoriality is preserved in any MMP by \cite[Corollaries 5.20 and 5.21 and Theorem 6.3]{HaconLiu21}.
	
	Now, let $ \big( X/Z, (B+N) + (M+P) \big) $ be a $ \Q $-factorial NQC log canonical g-pair such that the divisor $ K_X + B + N + M + P $ is nef over $ Z $. Then by \cite[Corollary 5.22]{HaconLiu21} we may run a $ (K_X + B + M) $-MMP with scaling of $ P+N $ over $Z$, cf.\ \cite[Section 3.4]{HanLi18}, whose termination is not known in general. In particular, we may run a $ (K_X + B + M) $-MMP with scaling of an ample divisor over $Z$, cf.\ \cite[Section 4]{BZ16}, \cite[Section 3.1]{HanLi18}. We refer to \cite{BZ16,HanLi18,LT22} for various results concerning the termination of the MMP with scaling of an ample divisor in the setting of g-pairs, and to Sections \ref{section:Term_with_Scaling} and \ref{section:Proofs} for further developments. We state here for future reference the following result, which has already appeared implicitly in \cite{HanLi18,LT22}, and we provide the details for the convenience of the reader.
	
	\begin{lem}\label{lem:HL18_term_with_ample_scaling}
		Let $ (X/Z,B+M) $ be an NQC log canonical g-pair such that $ (X,0) $ is $\Q$-factorial klt. Let $ A $ be an effective $ \R $-Cartier $ \R $-divisor on $ X $ which is ample over $ Z $ such that $ \big( X/Z,(B+A)+M \big) $ is log canonical and $ K_X + B + A + M $ is nef over $Z$. If $ (X,B+M) $ has a minimal model in the sense of Birkar-Shokurov over $ Z $, then any $ (K_X + B + M) $-MMP with scaling of $ A $ over $ Z $ terminates.
	\end{lem}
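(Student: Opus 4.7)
The plan is to argue by contradiction, combining the NQC weak Zariski decomposition coming from the Birkar-Shokurov minimal model with a rescaling trick that reinterprets the given MMP as one for a nearby g-pair. Assume the given MMP is infinite, and let $\psi_i \colon X \dashrightarrow X_i$ denote the composition of its first $i$ steps, $A_i = (\psi_i)_* A$ the strict transforms of $A$, and $\lambda_i$ the nef thresholds, so that $K_{X_i} + B_i + M_i + \lambda_i A_i$ is nef over $Z$. The sequence $(\lambda_i)$ is weakly decreasing with $\lambda_0 \leq 1$, and I set $\lambda := \lim_i \lambda_i \in [0,1]$; note $\lambda_i > 0$ for every $i$, since otherwise the MMP would terminate at step $i$. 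By Remark \ref{rem:MMimplWZD}, the existence of a Birkar-Shokurov minimal model of $(X, B+M)$ over $Z$ yields an NQC weak Zariski decomposition of $(X, B+M)$ over $Z$.

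Suppose first that $\lambda > 0$, and fix a rational $\mu \in (0, \lambda)$. On the extremal ray $R_i$ contracted at step $i$ one has $(K_X + B + M) \cdot R_i < 0$ and $(K_X + B + M + \lambda_i A) \cdot R_i = 0$, which forces $A \cdot R_i > 0$ and $(K_X + B + \mu A + M) \cdot R_i = (\mu - \lambda_i)\, A \cdot R_i < 0$. Thus the given MMP is simultaneously a $(K_X + B + \mu A + M)$-MMP with scaling of $A$, with thresholds $\lambda_i - \mu$. The g-pair $(X, B + \mu A + M)$ is $\Q$-factorial NQC log canonical, since $(X, B + A + M)$ is log canonical and $\mu < 1$; moreover, adding the NQC divisor $\mu A$ to the positive part of the weak Zariski decomposition of $(X, B+M)$ yields an NQC weak Zariski decomposition of $(X, B + \mu A + M)$ over $Z$. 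Invoking the termination techniques for MMPs with scaling of ample divisors on $\Q$-factorial NQC log canonical g-pairs admitting NQC weak Zariski decompositions, as developed in the proof of \cite[Theorem 4.4]{LT22} and in \cite[Section 4]{HanLi18}, produces the desired contradiction in this case.

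The main obstacle I anticipate is the boundary case $\lambda = 0$, where no rational $\mu \in (0, \lambda)$ exists and the perturbation argument above is vacuous. In this case I would argue directly from the Birkar-Shokurov minimal model $\varphi \colon X \dashrightarrow Y$: comparing the successive MMP models $X_i$ to the fixed minimal model $Y$ via the Negativity lemma and monitoring the difference of discrepancies $a(\,\cdot\,, X_i, B_i + M_i) - a(\,\cdot\,, Y, B_Y + M_Y)$ should show that only finitely many divisorial contractions and flips are possible, in the spirit of \cite[Proof of Theorem 4.1]{HanLi18}. A secondary technical check is that the klt hypothesis on $(X, 0)$ is preserved at each step of the MMP by \cite[Corollaries 5.20, 5.21 and Theorem 6.3]{HaconLiu21}, which allows the above arguments to be iterated along the MMP.
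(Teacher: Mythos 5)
Your case split on $\lambda := \lim_i \lambda_i$ matches the paper, and your observation that for $\lambda>0$ the given MMP is simultaneously a $(K_X+B+\mu A+M)$-MMP for any $\mu\in(0,\lambda)$ is the correct first move. However, the decisive step for $\lambda>0$ is missing, and what you propose in its place does not close the argument. The paper's key idea is that the hypothesis $(X,0)$ $\Q$-factorial klt lets one invoke \cite[Lemma 3.5]{HanLi18} to produce a boundary $\Delta$ with $K_X+\Delta\sim_{\R,Z}K_X+B+M+\tfrac{\lambda}{2}A$, such that $(X,\Delta)$ is klt and $\Delta$ is \emph{big} over $Z$; termination is then immediate from \cite[Corollary 1.4.2]{BCHM10}, with no appeal to weak Zariski decompositions and no lower-dimensional hypothesis. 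You never actually use the klt-ness of $(X,0)$ in your $\lambda>0$ argument (only remarking on its preservation along the MMP), and the replacement you offer --- ``invoking the termination techniques \dots as developed in the proof of \cite[Theorem 4.4]{LT22} and in \cite[Section 4]{HanLi18}'' --- is not a self-contained reference. \cite[Theorem 4.4]{LT22} carries an inductive hypothesis on minimal models in dimension $n-1$, which is absent from the present lemma, and the analogous machinery developed in this paper (Theorems \ref{thm:HL18_term_with_scaling} and \ref{thm:HH19_thm_g}) depends on this very lemma through Lemma \ref{lem:lifting_scaling_g}, so a general appeal to those ``techniques'' risks circularity. In short, you have identified the reduction $\lambda>0 \Rightarrow$ the MMP is a $(K_X+B+\mu A+M)$-MMP, but not the reason that reduction helps, namely that $\mu A$ together with the klt-ness of $(X,0)$ allows absorbing $M$ and $\mu A$ into a big klt boundary and then applying BCHM.

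For $\lambda=0$ you propose a discrepancy-monitoring argument ``in the spirit of'' \cite[Theorem 4.1]{HanLi18}. The paper simply cites that theorem directly, which is legitimate since in the $\lambda=0$ regime with a Birkar--Shokurov minimal model available and $(X,0)$ $\Q$-factorial klt its hypotheses are exactly met; your sketch is a plausible outline of its proof but is not filled in, so as written it is not a complete verification. The two-case structure is right, but the $\lambda>0$ branch as you have it is a genuine gap.
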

	
	\begin{proof}
		Denote by $ \lambda_i $ the corresponding nef thresholds in the steps of the given MMP and set $ \lambda := \lim\limits_{i\to \infty} \lambda_i $. We distinguish two cases.
		
		Assume first that $ \lambda > 0 $. Then the given MMP is also a $(K_X + B + M + \frac{\lambda}{2} A)$-MMP. By \cite[Lemma 3.5]{HanLi18} there exists a boundary $ \Delta $ on $ X $ such that 
		$K_X + \Delta \sim_{\R,Z} K_X + B + M + \frac{\lambda}{2} A  $, 
		the pair $(X,\Delta)$ is klt and the divisor $ \Delta $ is big over $ Z $. By \cite[Corollary 1.4.2]{BCHM10} the $(K_X+\Delta)$-MMP over $ Z $ with scaling of $ A $ terminates, and therefore the original MMP terminates.
		
		Assume now that $ \lambda = 0 $. By assumption and by \cite[Theorem 4.1]{HanLi18} we deduce that the given MMP terminates.
	\end{proof}
	
	In the remainder of this subsection we prove an analogue of \cite[Proposition 3.2(5)]{Bir11} in the context of g-pairs, which plays a fundamental role in the proofs of Theorems \ref{thm:WZD_existence_g} and \ref{thm:HH19_thm_g} and is also of independent interest. It generalises both \cite[Lemma 3.17]{HanLi18} and \cite[Lemma 2.20]{LT22}.
	
	\begin{notation}\label{notation}
		Let $X$ be a $\Q$-factorial variety, let $F_1 ,\dots , F_k$ be prime divisors on $X$, and for each $ j \in \{ 1, \dots, \ell \} $ let $G_j$ be the pushforward to $X$ of a nef over $Z$ Cartier divisor on some high birational model of $ X $. Consider the vector space
		\[ 	V := \bigoplus_{i=1}^k \R F_i \oplus \bigoplus_{j=1}^\ell \R G_j. \] 
		We write the elements of $ V $ as sums $ N+P $, where $ N \in \bigoplus_{i=1}^k \R F_i $ and $ P \in \bigoplus_{j=1}^\ell \R G_j $. We define a norm on $ V $ as follows: given $ N = n_1 F_1 + \dots + n_k F_k$ and $ P = p_1 G_1 + \dots + p_\ell G_\ell $, we set
		\[ 	\|N+P\|:=	\max_{i,j} \big\{ |n_i|, |p_j|  \big\} . \] 
		If we consider the cone 
		$$V_{\geq0}:= \bigoplus_{i=1}^k \R_{\geq0} F_i \oplus \bigoplus_{j=1}^\ell \R_{\geq0} G_j$$
		in $V$, then for each $N+P\in V_{\geq0}$ we have that $ (X, N+P) $ is a $\Q$-factorial NQC g-pair. Moreover, the set
		$$ \mathcal{L}(V) := \big\{N + P \in V \mid (X, N + P ) \text{ is log canonical} \big\} $$
		is a rational polytope, which may be unbounded; see for instance \cite[Section 3.3]{HanLi18}.
	\end{notation}
			
	\begin{prop}\label{prop:Bir11_prop_g}
		Assume Notation \ref{notation}. Let $ (X/Z,B+M) \in \mathcal L(V)$ such that the divisor $ K_X+B+M $ is nef over $ Z $. Then there exists a positive real number $ \delta $ such that if $ N+P \in \mathcal{L}(V) $ with $ \| (N-B) + (P-M) \| < \delta $, then any $ (K_X+N+P) $-MMP over $ Z $ is $ (K_X+B+M) $-trivial.
	\end{prop}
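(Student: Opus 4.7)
\smallskip

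The plan is to proceed by induction on the number of steps of the MMP. Write $\varphi_i \colon X \dashrightarrow X_i$ for the MMP map corresponding to the first $i$ steps, let $B_i, N_i$ denote the strict transforms of $B, N$, and let $M_i, P_i$ denote the pushforwards of the nef parts of $M, P$ from the common high birational model. We will establish simultaneously that (i) $K_{X_i} + B_i + M_i$ is nef over $Z$, and (ii) the extremal ray $R_i$ contracted at step $i$ satisfies $(K_{X_i} + B_i + M_i) \cdot R_i = 0$. The base case $i = 0$ is the hypothesis. Granted (i), triviality (ii) implies (i) at the next step via a standard Negativity-lemma argument: any divisorial contraction or flip of a $(K_{X_i} + N_i + P_i)$-negative ray that is $(K_{X_i} + B_i + M_i)$-trivial preserves nefness of $K + B + M$.

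For the main inductive step, apply the Cone theorem for $\Q$-factorial NQC log canonical g-pairs from \cite{HaconLiu21} to produce a curve $C_i$ generating $R_i$ with
\[ 0 < -(K_{X_i} + N_i + P_i) \cdot C_i \leq 2n , \]
where $n = \dim X$. By (i) we have $(K_{X_i} + B_i + M_i) \cdot C_i \geq 0$, so it remains to exclude strict positivity. Assuming the contrary, the affine function
\[ t \mapsto \big( K_{X_i} + (1-t)(B_i + M_i) + t(N_i + P_i) \big) \cdot C_i \]
is positive at $t = 0$ and negative at $t = 1$, hence vanishes at a unique
\[ t_0 = \frac{(K_{X_i} + B_i + M_i) \cdot C_i}{(K_{X_i} + B_i + M_i) \cdot C_i - (K_{X_i} + N_i + P_i) \cdot C_i} \in (0, 1) , \]
and the corresponding divisor $\Delta_{t_0} := (1-t_0)(B+M) + t_0(N+P)$ lies in $\mathcal{L}(V)$ within distance $t_0\delta < \delta$ of $B+M$, with its pushforward to $X_i$ numerically trivial against $C_i$ after adding $K_{X_i}$.

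The concluding step is a Shokurov-type rationality argument: the rational polytope $\mathcal{L}(V)$ has finitely many faces, $B+M$ lies in the relative interior of a specific one, and combined with the length bound $-(K_{X_i}+N_i+P_i)\cdot C_i \leq 2n$ this forces a uniform positive lower bound on the quantity $\|(N-B)+(P-M)\|$ required for such a $t_0$ to exist. Choosing $\delta$ below this bound yields the contradiction. This is essentially the same polytope computation carried out in \cite[Lemma 3.17]{HanLi18} and \cite[Lemma 2.20]{LT22}, transferred to the NQC log canonical setting via the Cone theorem of \cite{HaconLiu21}. The main obstacle is making this quantitative step uniform across all models $X_i$ arising in the MMP and across all extremal rays that could be contracted; this requires carefully tracking how pushforwards of elements of $V$ behave under divisorial contractions and flips, and checking that the relevant polytopal combinatorics are controlled entirely by data on the original variety $X$.
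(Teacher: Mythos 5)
Your outline reproduces the correct skeleton of the paper's argument --- induction on the steps of the MMP, preservation of nefness of $K_{X_i}+B_i+M_i$ via a triviality and Negativity-lemma argument, and an appeal to the length bound from the Cone theorem of \cite{HaconLiu21} to force the triviality at each step --- but the quantitative core that makes the argument close is left unexecuted. You explicitly acknowledge it: \enquote{the main obstacle is making this quantitative step uniform across all models $X_i$ arising in the MMP and across all extremal rays that could be contracted.} That obstacle is precisely what the proposition is about; invoking a \enquote{Shokurov-type rationality argument} without carrying it out leaves a real gap. Two concrete ingredients of the paper's proof are missing.

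First, the paper begins by decomposing $B+M$ as a convex combination $B + M = \sum_j r_j (B^{(j)}+M^{(j)})$ of \emph{rational} log canonical g-pair structures with $K_X + B^{(j)}+M^{(j)}$ nef over $Z$ (via \cite[Proposition 2.6]{HanLiu20}). Fixing a common Cartier index $r$ for all $r(K_X+B^{(j)}+M^{(j)})$, one gets the constant $\alpha := \min_j r_j / r > 0$, and since Cartier-ness is preserved under $(K+B+M)$-trivial birational maps (\cite[Theorem 1.3(4)(c)]{HaconLiu21}), one obtains the uniform lower bound $(K_{X_i}+B_i+M_i)\cdot C \geq \alpha$ whenever this intersection number is positive. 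Your proposal never produces such an $\alpha$; without it, the positivity $(K_{X_i}+B_i+M_i)\cdot C_i > 0$ gives no quantitative handle to play against the length bound, and the quantity $a$ in your $t_0 = a/(a+b)$ could be arbitrarily small.

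Second, you apply the Cone theorem's length bound to the divisor $K_{X_i}+N_i+P_i$ itself, obtaining $b \leq 2n$. That bound is true but is the wrong direction: it does not control how negative $(N_i - B_i + P_i - M_i)\cdot C_i$ can be, because the intersection numbers $F_{k,i}\cdot C_i$ and $G_{\ell,i}\cdot C_i$ of the basis divisors with the extremal curve are not bounded. The paper instead writes $N+P = (1-t)(B+M) + t(N'+P')$ for a fixed reference point $N'+P' \in \mathcal{L}(V)$ at a distance comparable to the polytope's size, with $t < \alpha/(\alpha+2\dim X)$ small, and applies the length bound to the g-pair $(X_i, N_i'+P_i')$ --- after checking that the MMP is $(K+N'+P')$-negative so that this g-pair remains log canonical. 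The contradiction then reads
$$ 0 > (K_{X_i}+N_i+P_i)\cdot C = (1-t)(K_{X_i}+B_i+M_i)\cdot C + t(K_{X_i}+N_i'+P_i')\cdot C \geq (1-t)\alpha - 2t\dim X > 0 ,$$
which is uniform because both $\alpha$ and the length bound are uniform. Your proposal, applying the length bound to $K+N+P$, does not produce a uniform contradiction, and the divisor $\Delta_{t_0}$ you introduce along the segment does not enter the final inequality in any useful way. In short: the strategy is right, but the decisive step --- the rational decomposition with Cartier index $r$ and the choice of which divisor to bound via the length bound --- is the substance of the argument, and it is missing.
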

	
	\begin{proof}
		First, by \cite[Proposition 2.6]{HanLiu20} there exist positive real numbers $r_1,\dots,r_m$ and $\Q$-divisors $B^{(1)},\dots, B^{(m)}$ and $M^{(1)},\dots, M^{(m)}$ such that 
		\[ \sum_{j=1}^m r_j=1, \quad B = \sum_{j=1}^m r_j B^{(j)},\quad M = \sum_{j=1}^m r_j M^{(j)} , \]
		and for each $ j \in \{1, \dots, m\} $, $(X,B^{(j)}+M^{(j)})$ is a log canonical g-pair and the divisor $ K_X + B^{(j)} + M^{(j)} $ is nef over $ Z $. In particular, we have
		\begin{equation}\label{eq:1_Bir11_g}
			K_X + B + M = \sum_{j=1}^m r_j \big(K_X + B^{(j)} + M^{(j)} \big) . 
		\end{equation}
		Let $ r $ be a positive integer such that $ r \big(K_X + B^{(j)} + M^{(j)} \big) $ is Cartier for every $ j \in \{1, \dots, m\} $, and set
		\[ \alpha := \frac{ \min \{ r_1,\dots,r_m \}}{r} > 0 . \]
		
		Since $\mathcal{L}(V)$ is a polytope, there exists a positive real number $\delta$ with the following property: if $ N+P \in \mathcal{L}(V) $ with $ \| (N-B) + (P-M) \| < \delta $, then there exist a point $ N'+P' \in \mathcal{L}(V) $ and a real number
		\begin{equation}\label{eq:9b}
			0 \leq t < \frac{\alpha}{\alpha + 2 \dim X} , 
		\end{equation}
		such that 
		\begin{equation}\label{eq:2_Bir11_g}
			N+P = (1-t)(B+M) + t(N' + P') .
		\end{equation}
	
		Now, we fix $ N+P \in \mathcal{L}(V) $ with $ \| (N-B) + (P-M) \| < \delta $, and we also fix a number $t$ and a divisor $ N'+P' \in \mathcal{L}(V) $ as in \eqref{eq:9b} and \eqref{eq:2_Bir11_g}. Moreover, we run a $ (K_X + N + P) $-MMP over $ Z $:
		$$ (X,N+P) =: (X_1,N_1+P_1) \dashrightarrow (X_2,N_2+P_2) \dashrightarrow \cdots $$
		For every $ i $ denote by $ B_i $, $ B_i^{(j)} $, $ M_i $ and $ M_i^{(j)} $ the pushforwards on $ X_i $ of $ B $, $ B^{(j)} $, $ M $ and $ M^{(j)} $, respectively. Assume that we showed that the MMP is $ (K_X+B+M) $-trivial up to the variety $X_i$. We will now show that the map $X_i\dashrightarrow X_{i+1}$ is $ (K_{X_i}+B_i+M_i) $-trivial.
		
		Denote by $ N_i' $ and $ P_i' $ the pushforwards on $ X_i $ of $ N' $ and $ P' $, respectively. From \eqref{eq:2_Bir11_g} we have
		\[ K_X+N+P = (1-t)(K_X+B+M) + t(K_X+N' + P'), \]
		which implies that the MMP is $ (K_X+N'+P') $-negative up to variety $X_i$. In particular, the g-pair $(X_i,N_i'+P_i')$ is log canonical. 
		
		Since the MMP is $ (K_X+B+M) $-trivial up to variety $X_i$, by \cite[Theorem 1.3(4)(c)]{HaconLiu21} we deduce that the g-pair $(X_i,B_i+M_i)$ is log canonical and that the divisor $K_{X_i}+B_i+M_i$ is nef over $Z$. Consequently, by \eqref{eq:1_Bir11_g}, the g-pairs $(X_i , B_i^{(j)} + M_i^{(j)})$ are log canonical, the divisors $K_{X_i} + B_i^{(j)} + M_i^{(j)}$ are nef over $ Z $, and $r(K_{X_i} + B_i^{(j)} + M_i^{(j)})$ is Cartier for every $ j \in \{1, \dots, m\} $ again by \cite[Theorem 1.3(4)(c)]{HaconLiu21}.
		
		Let $ R $ be the $ (K_{X_i}+N_i+P_i) $-negative extremal ray over $ Z $ contracted at the $i$-th step of the MMP. As the divisor $ K_{X_i}+B_i+M_i $ is nef over $ Z $, by \eqref{eq:2_Bir11_g} we infer that $ (K_{X_i}+N_i'+P_i') \cdot R < 0 $, hence by \cite[Theorem 1.3(2)]{HaconLiu21} there exists a rational curve whose class belongs to $R$ and satisfies
		\begin{equation}\label{eq:9a}
			(K_{X_i}+N_i'+P_i') \cdot C \geq -2 \dim X .
		\end{equation}
		
		Assume that $ (K_{X_i}+B_i+M_i) \cdot C > 0 $. Since 
		$$ (K_{X_i}+B_i+M_i)\cdot C = \sum_{j=1}^m r_j \big(K_{X_i} + B_i^{(j)} + M_i^{(j)} \big)\cdot C $$
		by \eqref{eq:1_Bir11_g}, we infer that at least one number $ \big(K_{X_i} + B_i^{(j)} + M_i^{(j)} \big)\cdot C \in\frac{1}{r}\Z $ is non-zero. Thus,
		\begin{equation}\label{eq:9c}
			(K_{X_i}+B_i+M_i) \cdot C \geq \alpha .	
		\end{equation}
		Therefore, by \eqref{eq:9b}, \eqref{eq:2_Bir11_g}, \eqref{eq:9a} and \eqref{eq:9c} we obtain
		\begin{align*}
			0 &> (K_{X_i}+N_i+P_i) \cdot C \\
				&= (1-t)(K_{X_i}+B_i+M_i) \cdot C + t(K_{X_i}+N_i'+P_i') \cdot C \\
				&\geq (1-t) \alpha - 2t \dim X > 0 ,
		\end{align*}
		a contradiction. Consequently, $ (K_{X_i}+B_i+M_i) \cdot C = 0 $, which finishes the proof.
	\end{proof}

	\subsection{Lifting a sequence of flips with scaling}
	
	Let $ \big( X_1/Z, (B_1+N_1) + (M_1+P_1) \big) $ be a $\Q$-factorial NQC log canonical g-pair such that $ K_{X_1} + B_1 + N_1 + M_1 + P_1 $ is nef over $ Z $. Then we may run a $ (K_{X_1} + B_1 + M_1) $-MMP with scaling of $ P_1+N_1 $ over $Z$, and we assume now (for our purposes here) that it consists only of flips. Thus, we obtain the following diagram:
	
	\begin{center}
		\begin{tikzcd}[column sep = 0.8em, row sep = 2em]
			(X_1,B_1+M_1) \arrow[dr, "\theta_1" swap] \arrow[rr, dashed, "\pi_1"] && (X_2,B_2+M_2) \arrow[dl, "\theta_1^+"] \arrow[dr, "\theta_2" swap] \arrow[rr, dashed, "\pi_2"] && (X_3,B_3+M_3) \arrow[dl, "\theta_2^+"] \arrow[rr, dashed, "\pi_3"] && \cdots \\
			& Z_1 && Z_2
		\end{tikzcd}
	\end{center}
	We denote by $ P_i $ and $ N_i $ the pushforwards of $P_1$ and $ N_1 $ on $ X_i $, respectively, and by $ \lambda_i $ the corresponding \emph{nef thresholds}:
	\[ \lambda_i := \inf \{ t \in \R_{\geq 0} \mid K_{X_i} + (B_i + t N_i) + (M_i + t P_i) \text{ is nef over } Z \} . \]
	
	Let $ h_1 \colon (X_1',B_1'+M_1') \to (X_1,B_1+M_1) $ be a dlt blowup of $ (X_1,B_1+M_1) $. In particular, we have 
	\begin{equation}\label{eq:1_lifting}
		K_{X_1'} + B_1'+M_1' \sim_\R h_1^*(K_{X_1} + B_1 + M_1) . 
	\end{equation}
	Let $f_1\colon W\to X_1$ be a log resolution of all the divisors in sight on $X_1$ which dominates $X_1'$ and such that $P_1=(f_1)_*P_W$, where $P_W$ is a divisor on $W$ which is NQC over $ Z $. We may write 
	\[ f_1^*(P_1+N_1) = P_W + (f_1)_*^{-1} N_1 + E_1 , \]
	where $ E_1 $ is an $ f_1 $-exceptional $ \R $-divisor on $ W $ which is actually effective by the Negativity lemma and since $ N_1 \geq 0 $. We define $P_1'$ and $N_1'$ as the pushforwards of $P_W$ and $(f_1)_*^{-1} N_1+E_1$, respectively, to $X_1'$, and we note that
	\begin{equation}\label{eq:2_lifting}
		P_1' + N_1' = h_1^* (P_1 + N_1) . 
	\end{equation}
	
	\begin{lem}\label{lem:lifting_scaling_g}
		With notation and assumptions as above, there exists a diagram
		\begin{center}
			\begin{tikzcd}[column sep = 2em, row sep = large]
				(X_1',B_1'+M_1') \arrow[d, "h_1" swap] \arrow[rr, dashed, "\rho_1"] && (X_2',B_2'+M_2') \arrow[d, "h_2" swap]
				\\ 
				(X_1,B_1+M_1) \arrow[dr, "\theta_1" swap] \arrow[rr, dashed, "\pi_1"] && (X_2,B_2+M_2) \arrow[dl, "\theta_1^+"] 
				\\
				& Z_1
			\end{tikzcd}
		\end{center}
		where the map $ \rho_1 \colon X_1' \dashrightarrow X_2' $ is a $(K_{X_1'}+B_1'+M_1')$-MMP with scaling of $ P_1' + N_1' $ over $ Z_1 $ and the map $ h_2 \colon (X_2',B_2'+M_2') \to (X_2,B_2+M_2) $ is a dlt blowup of the g-pair $ (X_2,B_2+M_2) $. Moreover, this MMP is also a $(K_{X_1'}+B_1'+M_1')$-MMP with scaling of $ P_1' + N_1' $ over $ Z $, and we have
		$$ P_2' + N_2' = h_2^* (P_2 + N_2) ,$$
		where $ P_2' :=(\rho_1)_*P_1' $ and $N_2' :=(\rho_1)_*N_1'$.
	\end{lem}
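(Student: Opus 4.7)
The plan is to exploit the fact that the first step of the given scaling MMP is defined over the flipping base $Z_1$, so everything can be lifted to the dlt blowup $X_1'$ over $Z_1$. Since $\theta_1$ is a flipping contraction with $\rho(X_1/Z_1)=1$ and since, by definition of the nef threshold $\lambda_1$, the divisor $K_{X_1}+B_1+M_1+\lambda_1(P_1+N_1)$ is nef over $Z$ and trivial on the contracted ray, it follows that
\begin{equation*}
K_{X_1}+B_1+M_1+\lambda_1(P_1+N_1) \sim_{\R,Z_1} 0.
\end{equation*}
Pulling this relation back via $h_1$ and using \eqref{eq:1_lifting} and \eqref{eq:2_lifting}, I obtain $K_{X_1'}+B_1'+M_1'+\lambda_1(P_1'+N_1') \sim_{\R,Z_1} 0$. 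By Lemma \ref{lem:sing_smaller_boundary}, the g-pair $\big(X_1',(B_1'+\lambda_1 N_1')+(M_1'+\lambda_1 P_1')\big)$ is $\Q$-factorial log canonical, and its $K$-divisor is trivial over $Z_1$.

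Next, I would run a $(K_{X_1'}+B_1'+M_1')$-MMP over $Z_1$ with scaling of $P_1'+N_1'$; such an MMP exists step-by-step by \cite[Corollary 5.22]{HaconLiu21}. By the scaling construction and the above relation, at every step the extremal ray being contracted is $(K_{X_1'}+B_1'+\lambda_1 N_1'+M_1'+\lambda_1 P_1')$-trivial, so the triviality over $Z_1$ is preserved throughout. For termination, I would use that $(X_1',B_1'+M_1')$ admits a minimal model in the sense of Birkar-Shokurov over $Z_1$: indeed, since $\theta_1^+$ is the flip of $\theta_1$, the divisor $K_{X_2}+B_2+M_2$ is ample over $Z_1$, so any dlt blowup of $(X_2,B_2+M_2)$ is such a Birkar-Shokurov minimal model. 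Then Lemma \ref{lem:HL18_term_with_ample_scaling}, after a perturbation argument that replaces $P_1'+N_1'$ by a sum with a small ample divisor and invokes Proposition \ref{prop:Bir11_prop_g} to identify the perturbed and original MMPs near the initial scale $\lambda_1$, yields termination at some $\rho_1\colon X_1'\dashrightarrow X_2'$.

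Denote by $(X_2', B_2'+M_2')$ the resulting model. Since the MMP preserves the $\R$-linear equivalence over $Z_1$ of $K+B+M+\lambda_1(P+N)$ to zero, and since $K_{X_2'}+B_2'+M_2'$ is nef over $Z_1$ at termination, I would apply the base point free theorem for g-pairs (together with the negativity lemma and the fact that $X_2$ is the relative canonical model over $Z_1$) to obtain a morphism $h_2\colon X_2'\to X_2$ with $K_{X_2'}+B_2'+M_2' = h_2^*(K_{X_2}+B_2+M_2)$. A standard discrepancy computation, using that $\rho_1$ is $(K_{X_1'}+B_1'+M_1')$-nonpositive and that $(X_1',B_1'+M_1')$ is dlt, then shows that $(X_2',B_2'+M_2')$ is a dlt blowup of $(X_2,B_2+M_2)$.

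Finally, the fact that $\rho_1$ is also a $(K_{X_1'}+B_1'+M_1')$-MMP with scaling of $P_1'+N_1'$ over $Z$ follows since every extremal ray contracted over $Z_1$ is also extremal over $Z$ (as $Z_1\to Z$ is a morphism), and since the nef thresholds over $Z$ and over $Z_1$ coincide at each step by the triviality argument above. The identity $P_2'+N_2' = h_2^*(P_2+N_2)$ is obtained by comparing both divisors on a common log resolution dominating $W$, $X_1'$, $X_2'$, $X_1$ and $X_2$, using that $(\rho_1)_*(P_1'+N_1') = P_2'+N_2'$ and invoking the negativity lemma applied to the effective $f_1$-exceptional divisor $E_1$ and its analogue on $X_2'$. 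The main obstacle I anticipate is the termination step, since the scaling divisor $P_1'+N_1'$ is neither ample nor in general big over $Z_1$, so Lemma \ref{lem:HL18_term_with_ample_scaling} does not apply verbatim; the perturbation-plus-Proposition \ref{prop:Bir11_prop_g} reduction outlined above is the delicate point where care is needed.
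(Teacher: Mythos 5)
Your high-level structure is right (lift $\theta_1$ to the dlt blowup, conclude with the crepant morphism $h_2$ via canonical models, compare scaling divisors), and your observation that $K_{X_1'}+B_1'+M_1'+\lambda_1(N_1'+P_1')\equiv_{Z_1}0$ is indeed the key fact that drives everything. But the termination step is where you diverge from the paper, and the gap you flag there is real: the paper does \emph{not} try to run an MMP with scaling of $P_1'+N_1'$ over $Z_1$ and then repair the hypothesis of Lemma~\ref{lem:HL18_term_with_ample_scaling} by perturbing $P_1'+N_1'$ with an ample divisor. That route would force you to show that the perturbed MMP, the original MMP, and the MMP with scaling of $N_1'+P_1'$ over $Z$ all agree, and Proposition~\ref{prop:Bir11_prop_g} only gives you control in a small neighbourhood of one boundary at a time; you would need additional arguments to glue these controls along an unknown-length sequence of steps.

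The paper's actual argument is cleaner and sidesteps this entirely. Having established, as you do, that a dlt blowup of $(X_2,B_2+M_2)$ is a Birkar--Shokurov minimal model of $(X_1',B_1'+M_1')$ over $Z_1$, it runs a $(K_{X_1'}+B_1'+M_1')$-MMP \emph{with scaling of an ample divisor over $Z_1$}; this terminates directly by Lemma~\ref{lem:HL18_term_with_ample_scaling} with no perturbation or Proposition~\ref{prop:Bir11_prop_g} needed, giving $\rho_1\colon X_1'\dashrightarrow X_2'$ and, via \cite[Lemma 2.12]{LMT}, the morphism $h_2$ with $K_{X_2'}+B_2'+M_2'\sim_\R h_2^*(K_{X_2}+B_2+M_2)$. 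Only \emph{afterwards} does one observe, using \eqref{eq:3_lifting} and the $\equiv_{Z_1}0$ relation \eqref{eq:4_lifting}, that the nef thresholds $\nu_j$ of this ample-scaled MMP, computed over $Z$ with respect to $N_1'+P_1'$, are all equal to $\lambda_1$; hence the \emph{same} sequence of birational maps is automatically also a $(K_{X_1'}+B_1'+M_1')$-MMP with scaling of $N_1'+P_1'$ over $Z$ (and over $Z_1$). In other words, the scaling-by-$P_1'+N_1'$ structure is recovered a posteriori from a finite MMP already known to terminate, rather than imposed a priori as the scaling that drives the MMP. Your proposal, by contrast, tries to run the MMP with the ``wrong'' scaling divisor from the start and then patch termination, which is the source of the difficulty you yourself identify. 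If you replace that step by the ample-scaling-over-$Z_1$ argument and then verify $\nu_j=\lambda_1$ for all $j$, the rest of your proof (the identification $P_2'+N_2'=h_2^*(P_2+N_2)$ via \eqref{eq:77} and \eqref{eq:3_lifting}) goes through essentially as you describe.
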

	
	\begin{proof}
		Since $ (X_2,B_2+M_2) $ is the canonical model of $ (X_1,B_1+M_1) $ over $ Z_1 $ by definition, we infer that $ (X_2,B_2+M_2) $ is a minimal model of $ (X_1,B_1+M_1) $ over $ Z_1 $, and thus a dlt blowup of $ (X_2,B_2+M_2) $ is a minimal model in the sense of Birkar-Shokurov of $ (X_1,B_1+M_1) $ over $ Z_1 $. By \eqref{eq:1_lifting} and by \cite[Theorem 3.14]{HaconLiu21} we conclude that $ (X_1',B_1'+M_1') $ has a minimal model in the sense of Birkar-Shokurov of $ Z_1 $. Therefore, by Lemma \ref{lem:HL18_term_with_ample_scaling} there exists a $ (K_{X_1'} + B_1'+M_1') $-MMP with scaling of an ample divisor over $Z_1$ which terminates with a minimal model $ (X_2',B_2'+M_2') $ of $ (X_1',B_1'+M_1') $ over $Z_1$; we denote by $ \rho_1 \colon X_1' \dashrightarrow X_2' $ the induced map. Since $(X_2,B_2+M_2)$ is the canonical model of $(X_1',B_1'+M_1')$ over $ Z_1 $, by \cite[Lemma 2.12]{LMT} there exists a morphism $ h_2 \colon X_2' \to X_2 $ such that 
		\begin{equation}\label{eq:77}
			K_{X_2'} + B_2'+M_2' \sim_\R h_2^*(K_{X_2} + B_2 + M_2) .		
		\end{equation}
		In particular, the obtained g-pair $ (X_2',B_2'+M_2') $ is a dlt blowup of the g-pair $ (X_2,B_2+M_2) $.
		
		Now, by \eqref{eq:1_lifting} and \eqref{eq:2_lifting} we obtain 
		\begin{align}\label{eq:3_lifting}
			K_{X_1'} &+ (B_1' + \lambda_1 N_1') + (M_1' + \lambda_1 P_1') \\
			& \sim_\R h_1^* \big(  K_{X_1} + (B_1 + \lambda_1 N_1) + (M_1 + \lambda_1 P_1) \big) .\notag
		\end{align}
		Since $ K_{X_1} + (B_1 + \lambda_1 N_1) + (M_1 + \lambda_1 P_1) \equiv_{Z_1} 0 $ by construction of the $ (K_{X_1} + B_1 + M_1) $-MMP with scaling of $ P_1+N_1 $ over $Z_1$, by \eqref{eq:3_lifting} we infer that 
		\begin{equation}\label{eq:4_lifting}
			K_{X_1'} + (B_1' + \lambda_1 N_1') + (M_1' + \lambda_1 P_1') \equiv_{Z_1} 0 .
		\end{equation}
		Denote by $ Y^j \dashrightarrow Y^{j+1} $ the steps of this MMP, where $ Y^1 := X_1' $ and $ Y^k := X_2' $, and by $B^j$, $M^j$, $N^j$ and $P^j$ the pushforwards of $B_1'$, $M_1'$, $N_1'$ and $P_1'$, respectively, on $Y^j$. For each $ j \in \{1,\dots,k \} $ consider
		$$ \nu_j := \inf \big\{ t \in \R_{\geq 0} \mid K_{Y^j} + (B^j + t N^j) + (M^j + t P^j) \text{ is nef over } Z \big\} . $$
		Then by using \eqref{eq:3_lifting} and \eqref{eq:4_lifting} 
		 we can readily check that $ \nu_j = \lambda_1 $ for every $ j \in \{1,\dots,k-1 \} $.
		Therefore, the above $ (K_{X_1'} + B_1'+M_1') $-MMP with scaling of an ample divisor over $Z_1$ is automatically also a $ (K_{X_1'} + B_1'+M_1') $-MMP with scaling of $ N_1' + P_1' $ over $ Z $.
		
		If we set $ N_2' := N^k $ and $ P_2' := P^k $, then it follows from \eqref{eq:77} and \eqref{eq:3_lifting} that $ N_2' + P_2' = h_2^* (P_2 + N_2) $. This completes the proof.
	\end{proof}
	
	Therefore, by continuing this process analogously, we obtain:
	
	\begin{thm}
		\label{thm:lifting_scaling_g}
		Let $ \big( X_1/Z, (B_1+N_1) + (M_1+P_1) \big) $ be a $\Q$-factorial NQC log canonical g-pair such that $ K_{X_1} + B_1 + N_1 + M_1 + P_1 $ is nef over $ Z $. Assume that we have a sequence of flips for $(X_1,B_1+M_1)$ with scaling of $ P_1+N_1 $ over $Z$:
		\begin{center}
			\begin{tikzcd}[column sep = 0.8em, row sep = 2em]
				(X_1,B_1+M_1) \arrow[dr, "\theta_1" swap] \arrow[rr, dashed, "\pi_1"] && (X_2,B_2+M_2) \arrow[dl, "\theta_1^+"] \arrow[dr, "\theta_2" swap] \arrow[rr, dashed, "\pi_2"] && (X_3,B_3+M_3) \arrow[dl, "\theta_2^+"] \arrow[rr, dashed, "\pi_3"] && \cdots \\
				& Z_1 && Z_2
			\end{tikzcd}
		\end{center}
		Then there exists a diagram
		\begin{center}
			\begin{tikzcd}[column sep = 0.8em, row sep = large]
				(X_1',B_1'+M_1') \arrow[d, "h_1" swap] \arrow[rr, dashed, "\rho_1"] && (X_2',B_2'+M_2') \arrow[d, "h_2" swap] \arrow[rr, dashed, "\rho_2"] && (X_3',B_3'+M_3') \arrow[d, "h_3" swap] \arrow[rr, dashed, "\rho_3"] && \dots 
				\\ 
				(X_1,B_1+M_1) \arrow[dr, "\theta_1" swap] \arrow[rr, dashed, "\pi_1"] && (X_2,B_2+M_2) \arrow[dl, "\theta_1^+"] \arrow[dr, "\theta_2" swap] \arrow[rr, dashed, "\pi_2"] && (X_3,B_3+M_3) \arrow[dl, "\theta_2^+"] \arrow[rr, dashed, "\pi_3"] && \dots \\
				& Z_1 && Z_2
			\end{tikzcd}
		\end{center}
		where, for each $i \geq 1$, 
		\begin{enumerate}[\normalfont (a)]
			\item the map $ \rho_i \colon X_i' \dashrightarrow X_{i+1}' $ is a $(K_{X_i'}+B_i'+M_i')$-MMP with scaling of $ N_i' + P_i' $ over $ Z $, where the divisors $ N_i' $ and $ P_i' $ on $ X_i' $ are defined as in Lemma \ref{lem:lifting_scaling_g}, and
			
			\item the map $ h_i \colon (X_i',B_i'+M_i') \to (X_i,B_i+M_i) $ is a dlt blowup.
		\end{enumerate}
		In particular, the sequence on top of the above diagram is a $ (K_{X_1'}+B_1'+M_1') $-MMP with scaling of $ N_1' + P_1' $ over $ Z $.
	\end{thm}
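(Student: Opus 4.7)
The plan is to prove the theorem by induction on the step index $i$, with Lemma \ref{lem:lifting_scaling_g} providing both the base case ($i=1$) and the inductive step. The base case is exactly the content of Lemma \ref{lem:lifting_scaling_g}: it produces the dlt blowup $h_2\colon(X_2',B_2'+M_2')\to(X_2,B_2+M_2)$, the lifted MMP $\rho_1\colon X_1'\dashrightarrow X_2'$ with scaling of $N_1'+P_1'$ over $Z$, and crucially the identity $P_2'+N_2'=h_2^*(P_2+N_2)$.

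For the inductive step, assume the diagram has been constructed up to $X_i'$, so that $h_i\colon(X_i',B_i'+M_i')\to(X_i,B_i+M_i)$ is a dlt blowup satisfying $K_{X_i'}+B_i'+M_i'\sim_\R h_i^*(K_{X_i}+B_i+M_i)$ and $P_i'+N_i'=h_i^*(P_i+N_i)$. To apply Lemma \ref{lem:lifting_scaling_g} with index $i$ in place of $1$, I need to verify that $\bigl(X_i'/Z,(B_i'+N_i')+(M_i'+P_i')\bigr)$ is a $\Q$-factorial NQC log canonical g-pair whose associated divisor $K_{X_i'}+B_i'+N_i'+M_i'+P_i'$ is nef over $Z$. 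The $\Q$-factoriality and NQC log canonicity follow from the dlt blowup construction (for $B_i'+M_i'$) together with the fact that $N_i'+P_i'$ is the pullback of $N_i+P_i$ along $h_i$, whose image $\bigl(X_i,(B_i+N_i)+(M_i+P_i)\bigr)$ is log canonical because log canonicity is preserved by the scaling MMP (the scaling MMP for $(X_1,B_1+M_1)$ with scaling of $P_1+N_1$ is simultaneously an MMP with trivial effect on $K_{X_i}+B_i+N_i+M_i+P_i$ whenever we have not yet reached the next threshold). The nefness over $Z$ is immediate from the identity
\[
K_{X_i'}+B_i'+N_i'+M_i'+P_i'\sim_\R h_i^*(K_{X_i}+B_i+N_i+M_i+P_i),
\]
since the right-hand side is nef over $Z$: indeed, at each step of an MMP with scaling, the scaled divisor $K_{X_j}+B_j+\lambda_{j-1}N_j+M_j+\lambda_{j-1}P_j$ is nef over $Z$, and the original scaling parameter is $\lambda_0=1$ (or more precisely, the hypothesis that $K_{X_1}+B_1+N_1+M_1+P_1$ is nef over $Z$ is preserved throughout the MMP with scaling by construction).

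Once these hypotheses are verified, Lemma \ref{lem:lifting_scaling_g} applied to the $i$-th stage directly produces the next dlt blowup $h_{i+1}\colon(X_{i+1}',B_{i+1}'+M_{i+1}')\to(X_{i+1},B_{i+1}+M_{i+1})$ together with the map $\rho_i\colon X_i'\dashrightarrow X_{i+1}'$, which is a $(K_{X_i'}+B_i'+M_i')$-MMP with scaling of $N_i'+P_i'$ over $Z$ (the lemma gives this MMP over $Z_i$ and then upgrades it to an MMP over $Z$). It also yields the compatibility $P_{i+1}'+N_{i+1}'=h_{i+1}^*(P_{i+1}+N_{i+1})$, which is precisely what is needed to continue the induction.

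The only conceptual subtlety, and the point that requires slight care, is the final assertion that concatenating the sequences $\rho_1,\rho_2,\ldots$ forms a \emph{single} $(K_{X_1'}+B_1'+M_1')$-MMP with scaling of $N_1'+P_1'$ over $Z$ — not merely a concatenation of separate MMPs. This is the nontrivial part of Lemma \ref{lem:lifting_scaling_g} that is inherited here: the nef thresholds $\nu_j$ in consecutive blocks match up into a non-increasing sequence because the blowup identities $P_i'+N_i'=h_i^*(P_i+N_i)$ and $K_{X_i'}+B_i'+M_i'\sim_\R h_i^*(K_{X_i}+B_i+M_i)$ ensure that the scaling threshold on $X_i'$ coincides with $\lambda_i$, hence agrees with the threshold inherited from the end of the previous block. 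This bookkeeping is already carried out inside the proof of Lemma \ref{lem:lifting_scaling_g}, and it propagates through the induction without change.
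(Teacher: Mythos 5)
Your overall structure — induction on $i$, with Lemma \ref{lem:lifting_scaling_g} as both base case and inductive engine, and the observation that the nef thresholds glue consecutive blocks into a single scaling MMP — matches what the paper's one-line \enquote{by continuing this process analogously} intends, and most of the bookkeeping is right. However, the verification of the hypotheses at the inductive step contains a genuine error.

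You assert that $K_{X_i}+B_i+N_i+M_i+P_i$ is nef over $Z$ for all $i$ and that $\big(X_i,(B_i+N_i)+(M_i+P_i)\big)$ is log canonical for all $i$, on the grounds that \enquote{the hypothesis that $K_{X_1}+B_1+N_1+M_1+P_1$ is nef over $Z$ is preserved throughout the MMP with scaling by construction}. This is false whenever $\lambda_1<1$. Indeed, if $R_1$ is the flipping ray of $\theta_1$, then $(K_{X_1}+B_1+M_1)\cdot R_1<0$, $\big(K_{X_1}+B_1+M_1+\lambda_1(N_1+P_1)\big)\cdot R_1=0$, and hence $(N_1+P_1)\cdot R_1>0$, so $\big(K_{X_1}+B_1+N_1+M_1+P_1\big)\cdot R_1=(1-\lambda_1)(N_1+P_1)\cdot R_1>0$. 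The flip $\pi_1$ is therefore $\big(K_{X_1}+B_1+N_1+M_1+P_1\big)$-positive, and on the flip side $K_{X_2}+B_2+N_2+M_2+P_2$ is $\theta_1^+$-negative, hence not nef over $Z$; moreover, by the Negativity lemma applied over $Z_1$, the discrepancies of $\big(X_2,(B_2+N_2)+(M_2+P_2)\big)$ can be strictly smaller than those of $\big(X_1,(B_1+N_1)+(M_1+P_1)\big)$, so log canonicity of the coefficient-$1$ g-pair need not survive either. What is preserved at each step is nefness and log canonicity at the \emph{current threshold}: $K_{X_i}+B_i+M_i+\lambda_{i-1}(N_i+P_i)$ is nef over $Z$ and $\big(X_i,(B_i+\lambda_{i-1}N_i)+(M_i+\lambda_{i-1}P_i)\big)$ is log canonical, because the partial MMP $X_1\dashrightarrow X_i$ is $\big(K_{X_1}+B_1+M_1+\lambda_{i-1}(N_1+P_1)\big)$-nonpositive as $\lambda_{i-1}\leq\lambda_j$ for $j<i$.

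The fix is to apply Lemma \ref{lem:lifting_scaling_g} at step $i$ with the rescaled scaling divisors $\lambda_{i-1}N_i$ and $\lambda_{i-1}P_i$ (and correspondingly $\lambda_{i-1}N_i'$, $\lambda_{i-1}P_i'$ on $X_i'$). Since a $(K_{X_i'}+B_i'+M_i')$-MMP with scaling of $\lambda_{i-1}(N_i'+P_i')$ over $Z$ is the same sequence of steps as one with scaling of $N_i'+P_i'$, and the resulting compatibility identity $\lambda_{i-1}(P_{i+1}'+N_{i+1}')=h_{i+1}^*\big(\lambda_{i-1}(P_{i+1}+N_{i+1})\big)$ divides through by $\lambda_{i-1}$, the induction then closes. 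Everything else in your argument (in particular the matching of nef thresholds across blocks) goes through unchanged once this rescaling is inserted.
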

	
	The same construction as above appears in \cite[Section 3.5]{HanLi18}, where the underlying variety $ X_1 $ of the g-pair $ (X_1,B_1+M_1) $ is additionally assumed to be klt. As demonstrated above, this assumption can be removed due to \cite{HaconLiu21}. For similar constructions see \cite[Section 3]{LMT} and \cite[Lemma 2.16]{CT20}.

	\subsection{Nakayama-Zariski decomposition and the MMP with scaling}
	
	Given a pseudo-effective $\R$-divisor $D$ on a smooth projective variety $X$, Nakayama \cite{Nak04} defined a decomposition $ D = P_\sigma (D) + N_\sigma (D) $, known as \emph{the Nakayama-Zariski decomposition of $D$}. This decomposition can be extended both to the singular setting, see for instance \cite[Subsection 2.2]{Hu20}, and to the relative setting, see \cite[Subsection III.4]{Nak04} and \cite[Section 2]{Les16}.
	
	We use here the relative Nakayama-Zariski decomposition of a pseudoeffective $ \R $-divisor on a normal $\Q$-factorial variety. Note that by \cite{Les16} this decomposition is not always well-defined. However, in all the cases we consider in this paper, the decomposition exists and behaves as in the absolute case. Below we recall briefly the basic definitions.
	
	\begin{dfn}
		Let $ \pi \colon X \to S $ be a projective surjective morphism between normal varieties. Assume that $ X $ is $ \Q $-factorial and fix a prime divisor $ \Gamma $ on $ X $. If $ D $ is a $ \pi $-big $ \R $-divisor on $ X $, then set
		\[ \sigma_\Gamma (D ; X/S) := \inf \{ \mult_\Gamma \Delta \mid \Delta \geq 0 \text{ and } \Delta \equiv_S D \} . \]
		If $ D $ is a $ \pi $-pseudoeffective $ \R $-divisor on $ X $, then pick a $ \pi $-ample $ \R $-divisor $ A $ on $ X $ and set 
		\[ \sigma_\Gamma (D ; X/S) := \lim\limits_{\varepsilon \to 0^+} \sigma_\Gamma (D + \varepsilon A ; X/S) . \]
		Note that $ \sigma_\Gamma (D ; X/S) $ does not depend on the choice of $ A $. Set 
		\[ N_\sigma (D ; X/S) := \sum_\Gamma \sigma_\Gamma (D ; X/S) \cdot \Gamma , \]
		where this formal sum runs through all prime divisors $\Gamma$ on $X$.
	\end{dfn}
	
	We will need the following lemma in the proof of Theorem \ref{thm:HL18_term_with_scaling}.
	
	\begin{lem}\label{lem:druel}
		Let $\big(X/Z,(B+N)+(M+P)\big)$ be a $\Q$-factorial NQC log canonical g-pair such that $ K_X + B + M $ is pseudoeffective over $Z$ and $ K_X + B + N + M + P $ is nef over $Z$. Assume that $N_\sigma(K_X+B+M; X/Z)$ is an $\R$-divisor.\footnote{This assumption is satisfied if, for instance, $(X,B+M)$ has a minimal model over $Z$: this follows from the Negativity lemma and from the relative versions of \cite[Lemmas III.5.14 and III.5.15]{Nak04} as in the proof of \cite[Lemma 2.4(1)]{HX13}.} Consider a $ (K_X + B + M) $-MMP with scaling of $ P+N $ over $Z$ and denote by $ \lambda_i $ the corresponding nef thresholds in the steps of this MMP. If $ \lim\limits_{i\to \infty} \lambda_i = 0$, then this MMP contracts precisely the components of $N_\sigma(K_X+B+M; X/Z)$.
	\end{lem}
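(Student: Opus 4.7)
The plan is to prove both inclusions separately. For the easy direction---that every divisor contracted by the MMP is the strict transform of a component of $N_\sigma(K_X+B+M;X/Z)$---I use that a divisorial step $X_i\dashrightarrow X_{i+1}$ contracting a prime divisor $E_i$ produces a component of $N_\sigma(K_{X_i}+B_i+M_i;X_i/Z)$, and that $\sigma_\Gamma$ is preserved along a $(K_X+B+M)$-MMP for every non-contracted prime divisor $\Gamma$ (a short calculation using the Negativity lemma applied to each divisorial step and each flip). Combined, these give that the strict transform $E_0$ of $E_i$ on $X$ satisfies $\sigma_{E_0}(K_X+B+M;X/Z)>0$, so $E_0$ is a component of $N_\sigma(K_X+B+M;X/Z)$. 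Flipping steps contract no divisors.

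For the hard direction, I argue by contradiction: suppose $\Gamma\subseteq X$ is a component of $N_\sigma(K_X+B+M;X/Z)$ with $\epsilon:=\sigma_\Gamma(K_X+B+M;X/Z)>0$ whose strict transform $\Gamma_i$ on $X_i$ exists for every $i$. The central claim is that
\[ \sigma_\Gamma\bigl(K_X+B+M+\lambda_i(P+N);X/Z\bigr)=0 \quad \text{for every $i\geq 1$.} \]
Indeed, since $\lambda_j\geq\lambda_i$ for every $j\leq i$, a short intersection-number calculation on the extremal ray $R_j$ contracted at step $j$ shows that every step $X_j\dashrightarrow X_{j+1}$ with $j<i$ is also $\bigl(K_{X_j}+B_j+M_j+\lambda_i(P_j+N_j)\bigr)$-nonpositive. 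Hence the composite birational contraction $X\dashrightarrow X_i$ is $(K_X+B+M+\lambda_i(P+N))$-nonpositive, and the Negativity lemma yields on any common log resolution $(g_i,h_i)\colon W_i\to X\times X_i$ a relation
\[ g_i^*\bigl(K_X+B+M+\lambda_i(P+N)\bigr)\sim_{\R,Z} h_i^*\bigl(K_{X_i}+B_i+M_i+\lambda_i(P_i+N_i)\bigr)+F_i \]
with $F_i$ effective and $h_i$-exceptional. Since $K_{X_i}+B_i+M_i+\lambda_i(P_i+N_i)$ is nef over $Z$ and the strict transform of $\Gamma$ on $W_i$ is not a component of $F_i$, the claim follows.

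To conclude, fix an ample divisor $H$ over $Z$ on $X$ and $\varepsilon>0$. Since $\lambda_i\to 0$, for all sufficiently large $i$ the divisor $\varepsilon H-\lambda_i(P+N)$ is ample over $Z$, whence $\sigma_\Gamma\bigl(\varepsilon H-\lambda_i(P+N);X/Z\bigr)=0$. Subadditivity of $\sigma_\Gamma$ then gives
\[ \sigma_\Gamma(K_X+B+M+\varepsilon H;X/Z)\leq\sigma_\Gamma\bigl(K_X+B+M+\lambda_i(P+N);X/Z\bigr)+0=0, \]
and letting $\varepsilon\to 0^+$ forces $\sigma_\Gamma(K_X+B+M;X/Z)=0$, contradicting the hypothesis on $\Gamma$. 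The main obstacle I anticipate is making the Negativity lemma rigorous on $W_i$ when $P$ is only the pushforward of an NQC divisor from a higher birational model; this should be handled by choosing $W_i$ to dominate the model on which the nef part is NQC, in the spirit of the constructions in Lemma \ref{lem:lifting_scaling_g}.
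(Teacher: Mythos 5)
Your proposal is correct and follows essentially the same two-direction strategy as the paper: the easy inclusion uses that a partial MMP contracts only components of $N_\sigma(K_X+B+M;X/Z)$ (the paper cites \cite[Lemma 2.4(1)]{HX13} here), and the harder inclusion hinges on the observation that, because $K_{X_i}+B_i+M_i+\lambda_i(N_i+P_i)$ is nef over $Z$, the map $X\dashrightarrow X_i$ is $\big(K_X+B+M+\lambda_i(N+P)\big)$-nonpositive and therefore any non-contracted prime divisor $\Gamma$ satisfies $\sigma_\Gamma\big(K_X+B+M+\lambda_i(N+P);X/Z\big)=0$. The only real divergence is in the final step: the paper then invokes the inclusion $\Supp N_\sigma(K_X+B+M;X/Z)\subseteq\bigcup_i\Supp N_\sigma\big(K_X+B+M+\lambda_i(N+P);X/Z\big)$ from the relative version of \cite[Lemma III.1.7(2)]{Nak04}, whereas you reprove this from scratch via subadditivity of $\sigma_\Gamma$ together with the observation that $\sigma_\Gamma(\varepsilon H-\lambda_i(N+P);X/Z)=0$ once $\lambda_i$ is small enough and then letting $\varepsilon\to 0^+$ in the definition of $\sigma_\Gamma$ along an ample direction; this is a valid, somewhat more self-contained alternative that buys independence from the Nakayama citation at the cost of a few extra lines.
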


	\begin{proof}
		Let $X_i$ be the steps of the $(K_X+B+M)$-MMP with scaling of $N+P$ over $Z$ and let $B_i$, $M_i$, $N_i$ and $P_i$ be the pushforwards on $X_i$ of $B$, $M$, $N$ and $P$, respectively, where $X=X_1$. Since the map $X\dashrightarrow X_i$ is a partial MMP for every $i$, it can contract only components of $N_\sigma(K_X+B+M; X/Z)$, cf.\ \cite[Lemma 2.4(1)]{HX13}.
	
		On the other hand, since the divisor $K_{X_i}+B_i+M_i+\lambda_i(N_i+P_i)$ is nef over $ Z $, by the Negativity lemma the map $X\dashrightarrow X_i$ contracts precisely the components of $N_\sigma\big(K_X+B+M+\lambda_i(N+P); X/Z\big)$. Since
		$$\Supp N_\sigma(K_X+B+M; X/Z)\subseteq\bigcup_{i\in\N}\Supp N_\sigma\big(K_X+B+M+\lambda_i(N+P); X/Z\big)$$
		by the relative version of \cite[Lemma III.1.7(2)]{Nak04}, every component of $N_\sigma(K_X+B+M; X/Z)$ must be contracted by some map $X\dashrightarrow X_i$. This completes the proof.
	\end{proof}

	\section{Weak Zariski decompositions}
	
	We prove a variant of \cite[Theorem 3.2]{LT22}, which plays a crucial role in this paper and is also of independent interest. Even though the strategy of the proof is the same, there are several small technical issues and we provide all the details for the benefit of the reader.
	
	\begin{thm}\label{thm:WZD_existence_g}
		Assume the existence of minimal models for smooth varieties of dimension $n-1$.
		
		Let $\big(X/Z,(B+N)+(M+P)\big)$ be an NQC $\Q$-factorial dlt g-pair of dimension $n$. Assume that the divisor $ K_X+B+N+M+P $ is pseudoeffective over $ Z $ and that for each $\varepsilon>0$ the divisor $K_X+B+M+(1-\varepsilon)(N+P)$ is not pseudoeffective over $Z$. Then the g-pair $\big(X,(B+N)+(M+P)\big)$ admits an NQC weak Zariski decomposition over $Z$.
	\end{thm}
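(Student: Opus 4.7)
The plan is to emulate the strategy of the proof of \cite[Theorem 3.2]{LT22}, adapted to the g-pair setting via the canonical bundle formula of \cite{Fil20,HanLiu21} and the foundational MMP results of \cite{HaconLiu21}. I would argue by induction on $n$: under the hypothesis that minimal models exist for smooth varieties of dimension $n-1$, the results of this paper up to this point provide, for $\Q$-factorial NQC log canonical g-pairs of dimension less than $n$, the existence of minimal models in the sense of Birkar-Shokurov, and hence of NQC weak Zariski decompositions by Remark \ref{rem:MMimplWZD}.

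Taking $\varepsilon=1$ in the hypothesis shows at once that $K_X+B+M$ is not pseudoeffective over $Z$. I would run a $(K_X+B+M)$-MMP with scaling of $N+P$ over $Z$, available by \cite[Corollary 5.22]{HaconLiu21}, with nef thresholds $\lambda_i$. A short application of the Negativity lemma on a common resolution shows that in fact every $\lambda_i$ equals $1$: if $\lambda_{i_0}<1$ for some $i_0$, the pull-back of $K_{X_{i_0}}+B_{i_0}+M_{i_0}+\lambda_{i_0}(N_{i_0}+P_{i_0})$ along a common resolution would differ from the pull-back of $K_X+B+M+\lambda_{i_0}(N+P)$ by an effective MMP-exceptional divisor, forcing $K_X+B+M+\lambda_{i_0}(N+P)$ to be pseudoeffective and contradicting the hypothesis with $\varepsilon=1-\lambda_{i_0}$. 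Hence each step is simultaneously $(K_X+B+M)$-negative and $(K_X+B+N+M+P)$-trivial. Using the lifting construction of Theorem \ref{thm:lifting_scaling_g} on a dlt blowup of $(X,B+M)$, Proposition \ref{prop:Bir11_prop_g}, and Lemma \ref{lem:HL18_term_with_ample_scaling}, and exploiting the non-pseudoeffectiveness of $K_X+B+M$, I would arrange, after finitely many further steps and birational modifications, a Mori fibre space $g\colon Y \to T$ over $Z$ on which $K_Y+B_Y+M_Y+N_Y+P_Y \equiv_T 0$.

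The canonical bundle formula for g-pairs then produces a $\Q$-factorial NQC log canonical g-pair $(T,B_T+M_T)$ of dimension strictly less than $n$ with
\[ K_Y+B_Y+M_Y+N_Y+P_Y \sim_{\R,Z} g^*(K_T+B_T+M_T). \]
By the inductive hypothesis and Remark \ref{rem:MMimplWZD}, the g-pair $(T,B_T+M_T)$ admits an NQC weak Zariski decomposition over $Z$. I would pull this decomposition back along $g$ to obtain an NQC weak Zariski decomposition of $K_Y+B_Y+M_Y+N_Y+P_Y$ on a birational model of $Y$, and then transfer it to $X$ via a common resolution of the induced birational map $X\dashrightarrow Y$, letting the Negativity lemma absorb the $(K_X+B+M)$-negative MMP-exceptional corrections into the effective part. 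This would yield the desired NQC weak Zariski decomposition of $K_X+B+N+M+P$ on a birational model of $X$.

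The principal obstacle is producing the Mori fibre space $g\colon Y\to T$ after finitely many steps, since general termination of MMPs with scaling for $\Q$-factorial NQC log canonical g-pairs is not known. One has to leverage both the special equality $\lambda_i\equiv 1$ and the non-pseudoeffectiveness of $K_X+B+M$, for instance by perturbing to $K_X+B+M+(1-\varepsilon)(N+P)$ for small $\varepsilon>0$ (which remains non-pseudoeffective by hypothesis) and running the associated MMP with scaling on a dlt blowup, where klt-type termination applies. A secondary technical point is the careful bookkeeping needed to keep the effective and NQC components genuinely separated on a single birational model when the decomposition is transferred from $T$ back to $X$; this is precisely where Theorem \ref{thm:lifting_scaling_g} and the Negativity lemma are indispensable.
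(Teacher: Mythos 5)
Your reduction observations (taking $\varepsilon=1$, the nef thresholds $\lambda_i\equiv 1$, the plan to apply the canonical bundle formula and induct on $\dim T$, the transfer back to $X$ via the Negativity lemma) are all sound, and they match the paper's Step 4. The genuine gap is exactly where you flag it: producing the Mori fibre space. Your remedies do not close it. Lemma \ref{lem:HL18_term_with_ample_scaling} requires the scaling divisor to be ample over $Z$ (so that for $\lambda>0$ one can reduce to a big boundary and invoke \cite[Corollary 1.4.2]{BCHM10}) and requires $(X,B+M)$ to have a minimal model in the sense of Birkar--Shokurov; here $N+P$ is not ample, and $K_X+B+M$ is not pseudoeffective over $Z$, so neither hypothesis holds. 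The $(1-\varepsilon)$-perturbation still gives an NQC log canonical (dlt after a dlt blowup) g-pair — not a klt one — so there is no \enquote{klt-type termination} to invoke, and Theorem \ref{thm:lifting_scaling_g} only lifts a given sequence of flips without furnishing termination on its own.

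The paper sidesteps termination over $Z$ entirely, and the crucial ingredient you are missing is \cite[Lemma 4.3]{HanLiu20}. In Step 1, it produces (after passing to a resolution) a fibration $\xi\colon X\to Y$ with $\dim Y<n$ on whose very general fibre $F$ one has $\nu\big(F,(K_X+B+N+M+P)|_F\big)=0$ and $h^1(F,\OO_F)=0$, while $K_X+B+M+(1-\varepsilon)(N+P)$ remains non-$\xi$-pseudoeffective. When $\dim Y=0$ (forcing $\dim Z=0$), the conclusion follows directly from \cite[Proposition V.2.7(8)]{Nak04} — a degenerate case your argument does not address. When $\dim Y>0$, instead of running an MMP over $Z$, the paper runs a $(K_X+B+N+M+P)$-MMP over $Y$: this terminates by \cite[Theorem 4.4(ii)]{LT22} because the divisor is effective over the strictly lower-dimensional base $Y$ (via \cite[Corollary 2.18]{LT22}), and this is precisely where the inductive hypothesis in dimension $n-1$ is exploited. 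One then applies Proposition \ref{prop:Bir11_prop_g} together with \cite[Lemma 4.4(1)]{BZ16} to reach a Mori fibre space over $Y$ on which $K+B+N+M+P$ is relatively trivial, and proceeds via the canonical bundle formula as you do. In short, your observation $\lambda_i\equiv 1$ is correct but not the right lever; the termination must come from working relative to the auxiliary base $Y$, not relative to $Z$.
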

	
	\begin{proof}
		We proceed in four steps.
		
		\medskip
		
		\emph{Step 1.} In this step we show that we may assume the following:
		
		\medskip
		
		\emph{Assumption 1}.
		There exists a fibration $\xi \colon X \to Y$ over $Z$ to a normal quasi-projective variety $Y$ such that $\dim Y<\dim X$ and such that:
		\begin{enumerate}
			\item[(a$_1$)] $\nu\big(F,(K_X+B+N+M+P)|_F\big)=0$ and $h^1(F,\OO_F)=0$ for a very general fibre $F$ of $\xi$, 
			
			\item[(b$_1$)] $K_X+B+M+(1-\varepsilon) (N+P)$ is not $\xi$-pseudoeffective for any $\varepsilon>0$. 
		\end{enumerate}
		
		\medskip
		
		To this end, by \cite[Lemma 4.3]{HanLiu20} and its proof there exist a birational contraction $ \varphi \colon X \dashrightarrow S$ over $Z$ and a fibration $f \colon S \to Y$ over $Z$ such that:
		\begin{enumerate}
			\item[(a)] $ \big(S,(B_S+N_S)+(M_S+P_S)\big) $ is an NQC $\Q$-factorial log canonical g-pair, where $B_S$, $N_S$, $M_S$ and $P_S$ are pushforwards of $ B $, $N$, $M$ and $P$ on $S$, respectively,
			
			\item[(b)] $ Y $ is a normal quasi-projective variety with $\dim Y<\dim X$,
			
			\item[(c)] $ K_S + B_S + N_S + M_S + P_S \sim_{\R,Y} 0 $,
			
			\item[(d)] $\varphi$ is a $\big(K_X+B+M+(1-\zeta)(N+P)\big)$-MMP for some $0 < \zeta \ll 1$ and $f$ is the corresponding Mori fibre space,
			
			\item[(e)] $ S $ has klt singularities.
		\end{enumerate}
		
		Let $(p,q) \colon W \to X \times S$ be a resolution of indeterminacies of $\varphi$ such that $ W $ is a log resolution of all the divisors in sight on $X$ and there exist nef $ \R $-divisors $M_W$ and $P_W$ on $W$ such that $M=p_*M_W$ and $P=p_*P_W$. If $ B_W $ and $ N_W $ are the strict transforms of $ B $ and $ N $ on $ W $, respectively, then we may write
		\begin{equation}\label{eq:1_WZD_existence_g}
			K_W+B_W+N_W+M_W+P_W+G_W \sim_\R p^*(K_X+B+N+M+P)+E_W,
		\end{equation}
		where the divisors $G_W$ and $E_W$ are effective, $ p $-exceptional and have no common components. 		
		\begin{center}
			\begin{tikzcd}[column sep = large]
				& W \arrow[dl, "p" swap] \arrow[dr, "q"] && \\
				X \arrow[rr, dashed, "\varphi" ] && S \arrow[r, "f"] & Y 
			\end{tikzcd}
		\end{center}
	
		Let $F$ be a very general fibre of $f$ and set $F_W := q^{-1}(F) \subseteq W$. In addition, set 
		\[ N_F:= N_S|_F , \quad B_F := B_S|_F , \quad M_F := M_S |_F , \quad P_F := P_S |_F \]
		and similarly for $N_{F_W}$, $B_{F_W}$, $M_{F_W}$, $P_{F_W}$ and $G_{F_W}$. Then both the divisors $K_F+B_F+N_F+M_F+P_F$ and $K_{F_W}+B_{F_W}+N_{F_W}+M_{F_W}+P_{F_W}+G_{F_W}$ are pseudoeffective and we have 
		\[ (q|_{F_W})_*(K_{F_W}+B_{F_W}+N_{F_W}+M_{F_W}+P_{F_W}+G_{F_W}) = K_F+B_F+N_F+M_F+P_F \]
		by \eqref{eq:1_WZD_existence_g}. By \cite[Lemma 2.8]{LP18} and by (c), we obtain
		\begin{align*}
			\nu(F_W,K_{F_W}&+B_{F_W}+N_{F_W}+M_{F_W}+P_{F_W}+G_{F_W}) \\
			& \leq \nu(F,K_F+B_F+N_F+M_F+P_F)=0 ,
		\end{align*}
		hence $\nu(F_W,K_{F_W}+B_{F_W}+N_{F_W}+M_{F_W}+P_{F_W}+G_{F_W})=0$.
		
		For every $\varepsilon>0$, the divisor $K_{F_W}+B_{F_W}+M_{F_W}+(1-\varepsilon)(N_{F_W}+P_{F_W})+G_{F_W}$ is not pseudoeffective, since otherwise the divisor 
		\begin{align*}
		K_F&+B_F+M_F+(1-\varepsilon)(N_F+P_F)\\
		& =(q_{F_W})_*\big(K_{F_W}+B_{F_W}+M_{F_W}+(1-\varepsilon)(N_{F_W}+P_{F_W})+G_{F_W}\big)
		\end{align*}
		would be pseudoeffective for some $\varepsilon>0$, a contradiction to (c) and (d).
		
		Since $ S $ has klt singularities by (e), so does $F$, and hence $F$ has rational singularities. Additionally, $h^1(F,\OO_F)=0$ by (d) and by the Kodaira vanishing theorem. It follows that $h^1(F_W,\OO_{F_W})=0$.
		
		If $K_W+B_W+N_W+M_W+P_W+G_W$ admits an NQC weak Zariski decomposition over $ Z $, then $K_X+B+N+M+P$ admits an NQC weak Zariski decomposition over $ Z $ by \eqref{eq:1_WZD_existence_g} and by \cite[Lemma 2.14]{LT22}.
		
		In conclusion, by replacing $ \big(X,(B+N)+(M+P)\big) $ with $\big(W,(B_W+N_W+G_W)+(M_W+P_W)\big)$ and by setting $\xi: = f \circ q$, we achieve Assumption 1.
		
		\medskip
		
		\emph{Step 2.} If $\dim Y=0$ (and thus necessarily $\dim Z=0$), then 
		\[K_X+B+N+M+P \equiv N_\sigma(K_X+B+N+M+P)\]
		by \cite[Proposition V.2.7(8)]{Nak04} and by (a$_1$). Hence, $K_X+B+N+M+P$ admits an NQC weak Zariski decomposition, and we are done.
		
		\medskip
				
		\emph{Step 3.} Assume from now on that $\dim Y>0$.
		In this step we show that we may assume the following:
		
		\medskip
		
		\emph{Assumption 2}.
		There exists a fibration $g \colon X \to T$ to a normal quasi-projective variety $T$ over $Z$ such that:
		\begin{enumerate}
			\item[(a$_2$)] $0<\dim T<\dim X$,
			
			\item[(b$_2$)] $K_X+B+N+M+P\sim_{\R,T} 0$.
			
%
%
		\end{enumerate} 
		\emph{However, instead of the g-pair $\big(X,(B+N)+(M+P)\big)$ being $\Q$-factorial dlt, we may only assume that it is an NQC log canonical g-pair such that $(X,0)$ is $\Q$-factorial klt.}
		
		\medskip
		
		Note that $\big(X,(B+N)+(M+P)\big)$ is also a g-pair over $Y$. It follows from (a$_1$) and \cite[Corollary 2.18]{LT22} that the divisor $K_X+B+N+M+P$ is effective over $Y$. Hence, by \cite[Theorem 4.4(ii)]{LT22} we may run a $(K_X+B+N+M+P)$-MMP with scaling of an ample divisor over $Y$ which terminates. We obtain thus a birational contraction $ \theta \colon X \dashrightarrow X' $ over $ Y $ and a g-pair $ \big(X',(B'+N')+(M'+P')\big) $ such that the divisor $ K_{X'} + B' + N' + M' + P' $ is nef over $ Y $, where $B'$, $ N'$, $M'$ and $P'$ are pushforwards of $B$, $N$, $M$ and $P$ on $X'$, and we denote by $\xi' \colon X' \to Y$ the induced morphism.
		
		By Proposition \ref{prop:Bir11_prop_g} there exists $\delta>0$ such that, if we run a $ \big(K_{X'}+B'+M'+(1-\delta)(N'+P') \big) $-MMP with scaling of an ample divisor over $Y$, then this MMP is $(K_{X'}+B'+N'+M'+P')$-trivial. In addition, note that $ K_{X'}+B'+M'+(1-\delta)(N'+P')$ is not $\xi'$-pseudoeffective: indeed, by possibly choosing $\delta$ smaller, we may assume that the map $\theta$ is $ \big(K_{X'}+B'+M'+(1-\delta)(N'+P')\big)$-negative, and the claim follows since $K_X+B+M+(1-\delta)(N+P)$ is not $\xi$-pseudoeffective by (b$_1$). Therefore, by \cite[Lemma 4.4(1)]{BZ16} this relative $ \big(K_{X'}+B'+M'+(1-\delta)(N'+P') \big) $-MMP terminates with a Mori fibre space $f'' \colon X'' \to Y''$ over $Y$. We obtain a birational contraction $\theta' \colon X' \dashrightarrow X''$ and a g-pair $ \big(X'',(B''+N'')+(M''+P'')\big) $, where $B''$, $ N''$, $M''$ and $P''$ are the appropriate pushforwards on $X''$, and we denote by $\xi'' \colon X'' \to Y$ the induced morphism.
		
		\begin{center}
			\begin{tikzcd}[column sep = large, row sep = 2.5em]
				X \arrow[dr, "\xi" swap] \arrow[r, dashed, "\theta"] & X' \arrow[r, dashed, "\theta'"] \arrow[d, "\xi'"] & X'' \arrow[d, "f''"] \arrow[dl] \\
				& Y & Y'' \arrow[l, "\xi''"] 
			\end{tikzcd}
		\end{center}
		
		Then the variety $ X'' $ is $ \Q $-factorial, the g-pair $ \big(X'',(B''+N'')+(M''+P'')\big) $ is NQC log canonical, the pair $(X'',0)$ is klt by Lemma \ref{lem:sing_smaller_boundary} since the g-pair $ \big(X'',B''+M''+(1-\delta)(N''+P'')\big) $ is dlt, and by Proposition \ref{prop:Bir11_prop_g} we have
		\[ K_{X''} + B'' + N'' + M'' + P'' \equiv_{Y''} 0. \]
		Furthermore, the numerical equivalence over $Y''$ coincides with the $\R$-linear equivalence over $Y''$, since $f''$ is an extremal contraction, see \cite[Theorem 1.3(4)(c)]{HaconLiu21}.
		
		Since the composite map $\theta' \circ \theta \colon X \dashrightarrow X'' $ is $ (K_X+B+N+M+P)$-non\-positive by construction, it follows from \cite[Lemma 2.14]{LT22} that if $K_{X''} + B'' + N'' + M'' + P''$ admits an NQC weak Zariski decomposition over $ Z $, then so does $K_X+B+N+M+P$.
		
		In conclusion, by replacing $ \big(X,(B+N)+(M+P)\big) $ with $\big(X'',(B''+N'')+(M''+P'')\big)$ and by setting $T:=Y''$ and $ g := f'' $, we achieve Assumption 2.
		
		\medskip
		
		\emph{Step 4.} By (b$_2$) and by \cite[Theorem 1.2]{HanLiu21} there exists an NQC log canonical g-pair $ (T/Z, B_T + M_T) $ such that
		\begin{equation}\label{eq:3_WZD_existence_g}
			K_X + B + N + M + P \sim_\R g^* (K_T + B_T + M_T).
		\end{equation}
		The divisor $ K_T + B_T + M_T $ is pseudoeffective over $ Z $ by \eqref{eq:3_WZD_existence_g}. Therefore, by (a$_2$) and by the assumptions of the theorem in lower dimensions together with Remark \ref{rem:MMimplWZD} and \cite[Theorem E]{LT22}, the g-pair $ (T/Z, B_T + M_T) $ admits an NQC weak Zariski decomposition over $Z$. Then \eqref{eq:3_WZD_existence_g} and \cite[Remark 2.11]{LT22} imply that the g-pair $\big(X/Z,(B+N)+(M+P)\big)$ admits an NQC weak Zariski decomposition over $Z$, as desired.
	\end{proof}
		
	The following consequence of Theorem \ref{thm:WZD_existence_g} plays an important role in the proofs of Proposition \ref{prop:HH19_prop_g} and Theorem \ref{thm:HH19_thm_g}.
	
	\begin{lem}\label{lem:MM_existence_HH19}
		Assume the existence of minimal models for smooth varieties of dimension $ n-1 $.
		
		Let $\big(X/Z,(B+N)+(M+P)\big)$ be an NQC $ \Q $-factorial log canonical g-pair of dimension $n$ such that $ K_X+B+N+M+P$ is pseudoeffective over $ Z $. Assume that the g-pair $ (X,B+M) $ admits an NQC weak Zariski decomposition over $ Z $ or that $ K_X+B+M $ is not pseudoeffective over $ Z $. Then $ \big(X/Z,(B+N)+(M+P)\big) $ has a minimal model in the sense of Birkar-Shokurov over $ Z $.
	\end{lem}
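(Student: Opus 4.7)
The plan is to reduce both hypotheses to a single application of Lemma \ref{lem:MM_bigger_boundary_g}, which converts an NQC weak Zariski decomposition on a smaller g-pair into a minimal model in the sense of Birkar-Shokurov on the larger one.

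In case (a), Lemma \ref{lem:MM_bigger_boundary_g} applies directly with $(B_1,M_1):=(B,M)$ and $(B_2,M_2):=(B+N,M+P)$: we have $B_2\geq B_1$ since $N\geq 0$, and $M_2-M_1=P$ is the pushforward of an NQC divisor by the standing convention on $\big(X,(B+N)+(M+P)\big)$, so the desired minimal model follows at once.

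In case (b), I set
\[ \lambda := \inf\{ t \in [0,1] \mid K_X+B+tN+M+tP \text{ is pseudoeffective over } Z\} \in (0,1]. \]
By closedness of the pseudoeffective cone, $K_X+B+\lambda N+M+\lambda P$ is pseudoeffective, while $K_X+B+M+(1-\varepsilon)\lambda(N+P)$ is not for any $\varepsilon>0$. I then pass to a dlt blowup $h\colon X'\to X$ of the g-pair $(X,B+N+M+P)$, placing all $h$-exceptional prime divisors into $B'$ and setting $N':=h_*^{-1}N$, with $M',P'$ being the pushforwards to $X'$ of the nef lifts of $M$ and $P$ to a common high birational model. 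A brief Negativity lemma computation, as in the proof of Lemma \ref{lem:MM_bigger_boundary_g}, shows that pseudoeffectivity over $Z$ of $K_{X'}+B'+tN'+M'+tP'$ is equivalent to that of $K_X+B+tN+M+tP$ for every $t\in[0,1]$, so the two pseudoeffectivity conditions above transfer to $X'$. By Lemma \ref{lem:sing_smaller_boundary}, $\big(X',(B'+\lambda N')+(M'+\lambda P')\big)$ is $\Q$-factorial NQC dlt, so Theorem \ref{thm:WZD_existence_g} applies and produces an NQC weak Zariski decomposition of $K_{X'}+B'+\lambda N'+M'+\lambda P'$ over $Z$. A second application of Lemma \ref{lem:MM_bigger_boundary_g} on $X'$, now with $(B_1,M_1):=(B'+\lambda N',M'+\lambda P')$ and $(B_2,M_2):=(B'+N',M'+P')$, yields a minimal model in the sense of Birkar-Shokurov of $(X',B'+N'+M'+P')$ over $Z$. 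A routine discrepancy check using $K_{X'}+B'+N'+M'+P'\sim_\R h^*(K_X+B+N+M+P)$ shows that this descends to a minimal model in the sense of Birkar-Shokurov of $\big(X,(B+N)+(M+P)\big)$ over $Z$.

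The main obstacle is the bookkeeping in case (b): verifying that the pseudoeffectivity threshold is preserved under the dlt blowup, that the intermediate g-pair inherits dlt singularities from the total one, and that the final minimal model on $X'$ descends to a minimal model of the original g-pair. Each of these is routine once the decomposition of boundary and nef parts is set up correctly; the substantive input is concentrated in Theorem \ref{thm:WZD_existence_g} and Lemma \ref{lem:MM_bigger_boundary_g}.
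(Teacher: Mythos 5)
Your proof is correct and follows essentially the same strategy as the paper's: split into the two cases, handle the first by Lemma \ref{lem:MM_bigger_boundary_g} directly, and in the second introduce the pseudoeffective threshold, pass to a dlt blowup, invoke Theorem \ref{thm:WZD_existence_g}, and conclude with Lemma \ref{lem:MM_bigger_boundary_g}.

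The one organisational difference is which g-pair you dlt-blow-up and, consequently, where the final descent occurs. The paper blows up the intermediate pair $\big(X,(B+\tau N)+(M+\tau P)\big)$; the crepant relation at level $\tau$ then lets it transport the NQC weak Zariski decomposition back down from $X'$ to $X$ (via \cite[Remark 2.11]{LT22}), so the closing application of Lemma \ref{lem:MM_bigger_boundary_g} happens on $X$ and no further descent is needed. You instead blow up the total pair $\big(X,(B+N)+(M+P)\big)$, run Theorem \ref{thm:WZD_existence_g} and Lemma \ref{lem:MM_bigger_boundary_g} entirely on $X'$, and finish by descending the resulting minimal model in the sense of Birkar--Shokurov from $X'$ to $X$. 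Both descent steps are routine; your threshold-preservation argument (using $K_{X'}+B'+tN'+M'+tP' \sim_\R h^*(K_X+B+tN+M+tP)+(1-t)\Theta'$ with $\Theta'\geq 0$ exceptional, plus pushforward for the converse) is correct, as is the observation that a minimal model in the sense of Birkar--Shokurov of a dlt blowup is one of the original g-pair. Descending at the WZD level as the paper does is marginally cleaner since it avoids re-checking the Birkar--Shokurov conditions across $h$, but the mathematical content is the same.
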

	
	\begin{proof}
		We consider the two cases separately.
		
		First, if the g-pair $ (X,B + M) $ admits an NQC weak Zariski decomposition over $ Z $, then the conclusion follows from Lemma \ref{lem:MM_bigger_boundary_g}.
		
		Assume now that $ K_X + B + M $ is not pseudoeffective over $ Z $. Set
		$$ \tau := \inf \big\{ t \in \R_{\geq 0} \mid K_X+B+M+t(N+P) \text{ is pseudoeffective over } Z \big\} , $$
		and observe that $ \tau \in (0,1] $. Take a dlt blowup 
		\[ h \colon \big(X', (B' + \tau h_*^{-1} N) +(M'+\tau P')\big) \to \big(X,(B+\tau N)+(M+\tau P) \big) \]
		of the g-pair $ \big(X,(B+\tau N)+(M+\tau P) \big) $, where $ B' := h_*^{-1} B + E $ and $ E $ is the sum of all $ h $-exceptional prime divisors. Then the g-pair $ \big(X', B' + M' \big) $ is dlt by Lemma \ref{lem:sing_smaller_boundary} and the divisor $ K_{X'} + B' + M' $ is not pseudoeffective over $ Z $, since the divisor $ K_X + B + M = h_*(K_{X'} + B' + M')$ is not pseudoeffective over $ Z $. Additionally, since 
		\[ K_{X'} + B' + \tau h_*^{-1} N + M'+\tau P' \sim_\R  h^* \big( K_X + B + \tau N + M +\tau P \big) \]
		is pseudoeffective over $ Z $, we infer similarly that
		\[ \tau = \inf \{ t \in \R_{\geq 0} \mid K_{X'} + B' + M' + t (h_*^{-1} N+P') \text{ is pseudoeffective over } Z \}. \]
		By Theorem \ref{thm:WZD_existence_g} the g-pair $ \big(X', (B' + \tau h_*^{-1} N) +(M'+\tau P')\big) $ admits an NQC weak Zariski decomposition over $ Z $, hence so does the g-pair $\big(X,(B+\tau N)+(M+\tau P) \big)$ by \cite[Remark 2.11]{LT22}. We conclude by Lemma \ref{lem:MM_bigger_boundary_g}.
	\end{proof}

	\section{On termination of the MMP with scaling}
	\label{section:Term_with_Scaling}
	
	The following result improves on \cite[Theorem 4.1]{HanLi18} by removing the assumption that the underlying variety is klt. Its proof follows the same strategy as the proofs of \cite[Theorem 4.1(iii)]{Bir12a} and \cite[Theorem 4.1]{HanLi18}. The organisation of the proof is somewhat different and occasionally streamlined. We make an extra effort to provide all the details, since the arguments are quite involved.
	
	\begin{thm}\label{thm:HL18_term_with_scaling}
		Let $ (X/Z,B+M) $ be a $\Q$-factorial NQC g-pair, let $ P $ be the pushforward of an NQC divisor on a high birational model of $ X $, and let $ N $ be an effective $ \R $-divisor on $ X $. Assume that the g-pair $ \big( X, (B+N) + (M+P) \big) $ is log canonical and that the divisor $ K_X + B + N + M + P $ is nef over $ Z $. Consider a $ (K_X + B + M) $-MMP over $Z$ with scaling of $ P+N $, denote by $ \lambda_i $ the corresponding nef thresholds in the steps of this MMP and set $ \lambda := \lim\limits_{i\to \infty} \lambda_i $.
		
		If $ \lambda \neq \lambda_i $ for every $ i $ and if $\big(X,(B+\lambda N)+(M+\lambda P)\big)$ has a minimal model in the sense of Birkar-Shokurov over $Z$, then the given MMP terminates.
	\end{thm}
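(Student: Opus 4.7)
The strategy is to adapt the proofs of \cite[Theorem 4.1(iii)]{Bir12a} and \cite[Theorem 4.1]{HanLi18}, both written in the klt case, to the $\Q$-factorial NQC log canonical generalised setting, leveraging the foundational results of \cite{HaconLiu21} that remove the klt hypothesis on the underlying variety.

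First, I would reduce to the case $\lambda=0$. Since $\lambda_i$ strictly decreases to $\lambda$, at the $i$-th step the contracted extremal ray $R_i$ satisfies $(K_{X_i}+B_i+M_i)\cdot R_i<0$ and $(K_{X_i}+B_i+M_i+\lambda_i(N_i+P_i))\cdot R_i=0$, forcing $(N_i+P_i)\cdot R_i>0$, hence
\[
\bigl(K_{X_i}+(B_i+\lambda N_i)+(M_i+\lambda P_i)\bigr)\cdot R_i=(\lambda-\lambda_i)(N_i+P_i)\cdot R_i<0.
\]
Thus the given sequence is also a $\bigl(K_X+(B+\lambda N)+(M+\lambda P)\bigr)$-MMP over $Z$ with scaling of $(1-\lambda)(N+P)$; this makes sense because $\lambda<1$ (as $\lambda_1\leq 1$ and $\lambda_i$ is strictly decreasing), and its new nef thresholds $\mu_i=(\lambda_i-\lambda)/(1-\lambda)$ tend to $0$. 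By hypothesis the new scaled g-pair admits a minimal model in the sense of Birkar-Shokurov over $Z$, so after relabelling we may assume $\lambda=0$ and that $(X,B+M)$ itself admits such a minimal model.

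Second, in the case $\lambda=0$, pushforwards on $X$ of the nef-over-$Z$ divisors $K_{X_i}+B_i+M_i+\lambda_i(N_i+P_i)$ show that $K_X+B+M$ is pseudoeffective over $Z$. Since $(X,B+M)$ has a Birkar-Shokurov minimal model, the footnote to Lemma \ref{lem:druel} guarantees that $N_\sigma(K_X+B+M; X/Z)$ is an $\R$-divisor, and Lemma \ref{lem:druel} then implies the MMP contracts exactly the finitely many components of this Nakayama-Zariski negative part. Hence only finitely many divisorial contractions occur, and after truncating we may assume every step is a flip. I would then apply Theorem \ref{thm:lifting_scaling_g} to a dlt blowup $(X_1',B_1'+M_1')\to(X,B+M)$ to lift the flipping sequence to a $(K_{X_1'}+B_1'+M_1')$-MMP with scaling of $N_1'+P_1'$ over $Z$, whose termination is equivalent to termination downstairs; by \cite[Theorem 3.14]{HaconLiu21} the dlt blowup inherits a Birkar-Shokurov minimal model over $Z$.

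Finally, I would close the argument by a standard Birkar-Shokurov discrepancy comparison: on a common log resolution of $(X_1',B_1'+M_1')$ and its Birkar-Shokurov minimal model, the discrepancies along the lifted MMP are non-decreasing and bounded above by those of the minimal model, so only finitely many valuations can have strictly increasing discrepancy. Combined with special termination for dlt g-pairs and an eventual reduction of the tail of the MMP (after it has stabilised on the non-klt locus) to Lemma \ref{lem:HL18_term_with_ample_scaling}, this forces termination. The main obstacle is precisely this last step: one must execute the special-termination and discrepancy-comparison arguments directly for NQC log canonical g-pairs, since the klt reduction used freely in \cite{HanLi18} is no longer available; here one must rely carefully on the Cone and Contraction theorems, the existence of flips and the monotonicity of discrepancies from \cite{HaconLiu21}, together with the compatibility of the scaling structure provided by Theorem \ref{thm:lifting_scaling_g}.
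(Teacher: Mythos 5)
Your opening reduction to $\lambda=0$ and the subsequent reduction to a sequence of flips are correct and match the paper, as does the idea of lifting to a dlt blowup via Theorem \ref{thm:lifting_scaling_g} and invoking Lemma \ref{lem:druel}. (A minor remark: you do not need Lemma \ref{lem:druel} to reduce to flips; Picard number already drops under divisorial contractions. The paper instead deploys Lemma \ref{lem:druel} \emph{after} lifting, to show that the lifted models $X_i'$ and the dlt blowup $Y'$ of the Birkar--Shokurov minimal model become isomorphic in codimension one for $i\gg 0$.)

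The genuine gap is in your final paragraph, where the entire weight of the theorem lies. You propose to finish by a ``standard Birkar--Shokurov discrepancy comparison'' together with ``special termination for dlt g-pairs'' and a reduction to Lemma \ref{lem:HL18_term_with_ample_scaling}. This does not work as stated. Special termination for $\Q$-factorial NQC log canonical g-pairs is not available (it is essentially equivalent to the kind of termination statement you are trying to prove), and a bare discrepancy-monotonicity argument only bounds the number of valuations whose discrepancy jumps --- it does not by itself force the flipping sequence to stop. You also omit the crucial intermediate facts established in the paper's Steps 3 and 4: that the map to the Birkar--Shokurov minimal model contracts no divisor, and that after passing to dlt blowups the lifted model is a minimal model \emph{in the usual sense} (i.e.\ a birational contraction) of $(X',B'+M')$; these are what make Lemma \ref{lem:druel} applicable to the lifted sequence.

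What the paper actually does in Steps 6--9 is quite different from special termination. It fixes an ample basis $A$ of $N^1(\cdot/Z)$, truncates, and, for a suitable index $i$ and a small $\varepsilon>0$, runs an auxiliary $\big(K_{X_i'}+B_i'+M_i'+\lambda_{i-1}(N_i'+P_i')+\varepsilon A_i'\big)$-MMP over $Z$; this auxiliary pair is $\R$-linearly equivalent to a genuine klt pair by \cite[Lemma 3.5]{HanLi18}, so BCHM applies, and by Proposition \ref{prop:Bir11_prop_g} the auxiliary MMP is $\big(K_{X_i'}+B_i'+M_i'+\lambda_{i-1}(N_i'+P_i')\big)$-trivial. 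Comparing the resulting small $\Q$-factorial modification $T$ both with $X_i'$ and with a further model $Y''$ of $Y'$ (via the Negativity lemma and a uniqueness-of-ample-models argument) yields that $K_{X_i'}+B_i'+M_i'$ is already nef over $Z$, contradicting the assumption that the MMP continues. This model-comparison argument, rather than special termination, is the missing core of the proof.
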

	
	\begin{proof}
		We proceed in several steps.
			
		\medskip
				
		\emph{Step 1.}
		First, by replacing $(X,B+M)$ with $\big(X,B+M+\lambda (P+N)\big)$, we may assume that $\lambda=0$. We may also assume that the MMP consists only of flips. Next, by arguing as in the first paragraphs of the proof of \cite[Theorem 5.1]{LMT} we may further assume that each birational map $ X_i \dashrightarrow X_{i+1} $ in the MMP is an isomorphism at the generic point of each log canonical centre of $(X_i,B_i+M_i)$. Pick an index $i$ such that $\lambda_i>\lambda_{i+1}$ and note that the g-pair $\big(X_{i+1}, B_{i+1}+M_{i+1}+\lambda_i(N_{i+1}+P_{i+1})\big)$ is log canonical. Therefore, by replacing $(X,B+M)$ with $(X_{i+1},B_{i+1}+M_{i+1})$ and $N+P$ with $\lambda_{i+1}(N_{i+1}+P_{i+1})$, we may assume that
		$$\lambda_1=1$$
		and
		\begin{equation}\label{eq:5}
			\big(X,B+M+(1+\varepsilon) (N+P)\big)\text{ is log canonical for }0\leq\varepsilon\ll 1.
		\end{equation}

		\medskip
		
		\emph{Step 2.} 
		By assumption, there exists a birational map $\varphi\colon X \dashrightarrow Y$ to a minimal model in the sense of Birkar-Shokurov $(Y,B_Y+M_Y)$ of $(X,B+M)$ over $ Z $.  Let $(f,g)\colon W \to X\times Y$ be a resolution of indeterminacies of the map $\varphi$ which is a log resolution of all relevant divisors in sight and such that there exists divisors $M_W$ and $P_W$ on $W$ which are nef over $Z$ such that $f_*M_W=M$ and $f_*P_W=P$. Set $N_W:=f_*^{-1}N$ and let $B_W$ be the sum of $f_*^{-1}B$ and of all $f$-exceptional prime divisors on $W$. 
	
		\medskip
		
		\emph{Step 2a.} In this step we construct a dlt blowup of $(Y',B_{Y'}+M_{Y'})$ of $(Y, B_Y+M_Y)$.
		
		The divisor
		$$ F := f^*(K_{X}+B+M) -g^*(K_Y+B_Y+M_Y) $$
		is effective and $g$-exceptional and the divisor
		$$F' := K_W+B_W+M_W - f^*(K_{X}+B+M) $$
		is effective and both $f$-exceptional and $g$-exceptional by Remark \ref{rem:1}. We have
		$$ K_W+B_W+M_W \equiv_Y F+F'. $$
		By \cite[Proposition 3.8]{HanLi18} we can run a $(K_W+B_W+M_W)$-MMP over $Y$ with scaling of an ample divisor which terminates with a model $(Y',B_{Y'}+M_{Y'})$ on which the divisor $F+F'$ is contracted. We denote by $\theta \colon W \dashrightarrow Y'$ the induced birational contraction.
	
		\medskip
		
		\emph{Step 2b.} In this step we construct a dlt blowup of $(X',B'+M')$ of $(X,B+M)$ with certain additional properties that we will need later.
	
		Set
		$$ F'' := K_W+(B_W+N_W)+(M_W+P_W) -f^*\big(K_X+(B+N)+(M+P)\big).$$
		Since the g-pair $ \big( X, (B+N) + (M+P) \big) $ is log canonical, $F''$ is effective and $f$-exceptional. By \cite[Proposition 3.8]{HanLi18} we can run a $\big(K_W+(B_W+N_W)+(M_W+P_W)\big)$-MMP over $X$ with scaling of an ample divisor which terminates with a model $\big(X',(B'+N')+(M'+P')\big)$ on which the divisor $F''$ is contracted. We denote by $\xi \colon W \dashrightarrow X'$ the induced birational contraction and by $h \colon X' \to X$ the induced map. 

		\begin{center}
			\begin{tikzcd}[column sep = 2em, row sep = large]
				(X',B'+M') \arrow[rrd, "h"] && (W,B_W+M_W) \arrow[d, "f" swap] \arrow[ll, dashed, "\xi" swap] \arrow[rr, dashed, "\theta"] \arrow[rrd, "g"] && (Y',B_{Y'}+M_{Y'}) \arrow[d]
				\\ 
				&& (X,B+M) \arrow[rr, dashed, "\varphi"] && (Y,B_Y+M_Y) 
			\end{tikzcd}
		\end{center}
	
		Then 
		$$ K_{X'}+(B'+N')+(M'+P') \sim_\R h^*\big(K_X+(B+N)+(M+P)\big). $$
		Write
		$$ h^*(N+P)=N'+P'+\Theta'. $$
		Since $N'=h_*^{-1}N$, the divisor $h^*N-N'$ is effective and $h$-exceptional, and the divisor $h^*P-P'$ is effective and $h$-exceptional by the Negativity lemma. Therefore, $\Theta'$ is an effective and $h$-exceptional divisor, hence every component of $\Theta'$ is a log canonical place of $\big(X,(B+N)+(M+P)\big)$ on $X'$. This and \eqref{eq:5} imply that $\Theta'=0$, and consequently
		$$ h^*(N+P)=N'+P' $$
		and
		\[ K_{X'}+B'+M' \sim_\R h^*(K_X+B+M). \]
		The map $h$ is therefore a dlt blowup of $(X,B+M)$. Note that an $f$-exceptional prime divisor on $W$ is contracted by $\xi$ if and only if it is not a log canonical place of $\big(X,(B+N)+(M+P)\big)$ on $W$ by construction. Thus, an $f$-exceptional prime divisor on $W$ is contracted by $\xi$ if and only if it is not a log canonical place of $(X,B+M)$ on $W$ by \eqref{eq:5}.
		
		\medskip
		
		\emph{Step 3.} 
		In this step we show that $\varphi$ does not contract any divisor.
		
		Assume for contradiction that $D$ is a $ \varphi $-exceptional prime divisor on $ X $ and let $D_W:=f_*^{-1}D$. Then $a(D,X,B+M)<a(D,Y,B_Y+M_Y)$ by the definition of minimal models in the sense of Birkar-Shokurov, and thus $D_W\subseteq\Supp F$. Therefore, the map $\theta$ contracts $D_W$ by Step 2a.
		
		In the remainder of this step we will derive a contradiction by showing that $D_W$ cannot be contracted by $\theta$.
		
		Pick $0<\lambda_i\ll1$ such that the map $\theta$ is $\big(K_W+B_W+M_W+\lambda_i (N_W+P_W)\big)$-negative. Since $\big(X,(B+M)+\lambda_i(N+P)\big)$ is log canonical and by the definition of $B_W$, the divisor
		$$G:=K_W+B_W+M_W+\lambda_i (N_W+P_W)-f^{*}\big(K_X+B+M+\lambda_i(N+P)\big)$$
		is effective and $f$-exceptional.
		
		Next, let $(p,q)\colon W'\to X\times X_i$ be a resolution of indeterminacies of the birational contraction $X\dashrightarrow X_i$, which dominates $W$. Then
		\begin{equation}\label{eq:4}
				p^*\big(K_X+B+M+\lambda_i(N+P)\big)\sim_\R q^*(K_{X_{i}}+B_i+M_i+\lambda_i(N_i+P_i)\big)+E,	
		\end{equation}
		where $E$ is effective and $p$-exceptional, since the map $X\dashrightarrow X_i$ is an isomorphism in codimension $1$, see Step 1. We denote by $Q$ the pushforward of $q^*(K_{X_{i}}+B_i+M_i+\lambda_i(N_i+P_i)\big)$ to $W$. Then $Q$ is movable over $ Z $, since the divisor $K_{X_{i}}+B_i+M_i+\lambda_i(N_i+P_i)$ is nef over $ Z $. Pushing forward the relation \eqref{eq:4} to $W$, we obtain
		$$f^*\big(K_X+B+M+\lambda_i(N+P)\big)\sim_\R Q+E',$$ 
		where $E'\geq0$ is $f$-exceptional. This yields
		\begin{equation}\label{eq:90}
			K_W+B_W+M_W+\lambda_i (N_W+P_W)\sim_\R Q+G+E'. 
		\end{equation}
		Since the divisor $G+E'$ is $f$-exceptional,
		\begin{equation}\label{eq:90_1}
			\text{the prime divisor $D_W = f_*^{-1}D$ is not a component of $G+E'$.}
		\end{equation}

		Now, assume that $D_W$ is contracted at the $\ell$-th step $ \mu_\ell \colon W_\ell \dashrightarrow W_{\ell+1} $ of the MMP $\theta \colon W \dashrightarrow Y'$. In particular, $ \mu_\ell  $ is a divisorial contraction. Denote by $ B_{W_\ell} $, $ D_{W_\ell} $, $ E'_{W_\ell} $, $ G_{W_\ell} $, $ M_{W_\ell} $, $ N_{W_\ell} $, $ P_{W_\ell} $ and $ Q_{W_\ell} $ the pushforwards on $ W_\ell $ of the divisors $ B_W $, $ D_W $, $ E' $, $ G $, $ M_W $, $ N_W $, $ P_W $ and $ Q $, respectively. From \eqref{eq:90} we have
		\begin{equation}\label{eq:90_interm}
			K_{W_\ell}+B_{W_\ell}+M_{W_\ell}+\lambda_i (N_{W_\ell}+P_{W_\ell})\sim_\R Q_{W_\ell}+G_{W_\ell}+E'_{W_\ell} .
		\end{equation}
		Since $ D_{W_\ell} $ is not a component of $ G_{W_\ell} + E'_{W_\ell} $ by \eqref{eq:90_1}, for a general curve $C_\ell$ contracted by $\mu_\ell$ we would have $(G_{W_\ell} + E'_{W_\ell}) \cdot C_\ell\geq 0$, as well as $Q_{W_\ell} \cdot C_\ell \geq0$, since $ Q_{W_\ell} $ is movable over $ Z $. On the other hand, we have $\big( K_{W_\ell}+B_{W_\ell}+M_{W_\ell}+\lambda_i (N_{W_\ell}+P_{W_\ell}) \big) \cdot C_\ell <0$ by the choice of $\lambda_i$, which contradicts \eqref{eq:90}. This proves the claim.
		
		\medskip
	
		\emph{Step 4.} In this step we show that $(Y',B_{Y'}+M_{Y'})$ is a minimal model in the usual sense of $(X',B'+M')$ over $Z$.
		
		Assume first that $S$ is a prime divisor on $ Y' $ which is contracted to $X'$. Let $S_W$ be the strict transform of $S$ on $W$. If $S$ is not exceptional over $Y$, then $a(S_W,X,B+M)=-1$ by the definition of minimal models in the sense of Birkar-Shokurov. If $S$ is exceptional over $Y$, then $a(S_W,Y,B_Y+M_Y)=-1$ since $(Y',B_{Y'}+M_{Y'})$ is a dlt blowup of $(Y, B_Y+M_Y)$, hence we have again $a(S_W,X,B+M)=-1$ by Remark \ref{rem:1}. But this contradicts the last sentence in Step 2b. Therefore, the map $X' \dashrightarrow Y'$ is a birational contraction.
		
		Assume now that $S$ is a prime divisor on $X'$ which is contracted to $Y'$. Then $S$ is contracted to $Y$, and since $\varphi$ does not contract any divisor by Step 3, the divisor $S$ must be contracted to $X$. Since $(X',B'+M')$ is a dlt blowup of $(X,B+M)$, we must have
		\begin{equation}\label{eq:91}
			a(S, X', B'+M')=a(S, X, B+M)=-1.	
		\end{equation}
		Therefore, we need to show that $a(S,Y',B_{Y'}+M_{Y'})>-1$. 
		
		Assume on the contrary that $a(S,Y',B_{Y'}+M_{Y'})=-1$. Then we also have $a(S,Y,B_Y+M_Y)=-1$, since $(Y',B_{Y'}+M_{Y'})$ is a dlt blowup of $(Y,B_Y+M_Y)$. Recalling the definitions of the divisors $F$ and $F'$ from Step 2a, if $S_W$ is the strict transform of $S$ on $W$, then the previous sentence implies that $S_W\nsubseteq \Supp F$. Moreover, \eqref{eq:91} implies that $S_W\nsubseteq \Supp F'$. Hence, $S_W$ is not contracted by $\theta$ by Step 2a, a contradiction which shows the claim.
		
		\medskip
	
		\emph{Step 5.}
		By Theorem \ref{thm:lifting_scaling_g} we can lift the sequence $X_i\dashrightarrow X_{i+1}/Z_i$ to a $(K_{X'}+B'+M')$-MMP over $Z$ with scaling of $N'+P'$, where each g-pair $({X_i'}, B_i'+M_i')$ is $\Q$-factorial and dlt. 
	
		\begin{center}
			\begin{tikzcd}[column sep = 2em, row sep = large]
				(X',B'+M') \arrow[rrd, "h"] \arrow[d, dashed] && (W,B_W+M_W) \arrow[d, "f" swap] \arrow[ll, dashed, "\xi" swap] \arrow[rr, dashed, "\theta"] \arrow[rrd, "g"] && (Y',B_{Y'}+M_{Y'}) \arrow[d]
				\\ 
				(X_i', B_i'+M_i') \arrow[rrd] && (X,B+M) \arrow[rr, dashed, "\varphi"] \arrow[d, dashed] && (Y,B_Y+M_Y) 
				\\
				&& (X_i, B_i+M_i) && 
			\end{tikzcd}
		\end{center}
	
		By Step 4 and by Lemma \ref{lem:druel} the varieties $X_i'$ and $Y'$ are isomorphic in codimension $1$ for all $i\gg0$.
	
		\medskip
		
		\emph{Step 6.} To finish the proof of the theorem, it suffices to show that the MMP 
		\[ (X_1', B_1'+M_1') \dashrightarrow (X_2', B_2'+M_2') \dashrightarrow \dots \dashrightarrow (X_i', B_i'+M_i') \dashrightarrow \cdots \]
		terminates, where $(X_1', B_1'+M_1'):=(X', B'+M')$.
	
		Suppose that this MMP does not terminate. After relabelling we may assume that this MMP consists only of flips and that all $X_i'$ and $Y'$ are isomorphic in codimension $1$ by Step 5. 
			
		Let $A$ be a reduced effective divisor on $W$ whose components are divisors which are ample over $Z$ and whose classes generate $N^1(W/Z)_\R$. Since the map $\xi\colon W\dashrightarrow X'$ is obtained by running a $(K_W+B_W+M_W+N_W+P_W)$-MMP over $ X $ by Step 2b, there exists $0<\varepsilon_0\ll1$ such that this map is also $(K_W+B_W+M_W+N_W+P_W+\varepsilon A)$-negative for each $0<\varepsilon\leq\varepsilon_0$, so that the g-pair $\big(X'/Z,(B'+N'+\varepsilon A')+(M'+P')\big)$ is dlt, where $A'$ is the strict transform of $A$ on $X'$. 
		
		Similarly we may assume that $\big(Y'/Z,(B_{Y'}+\varepsilon A_{Y'})+M_{Y'})$ is dlt for each $0<\varepsilon\leq\varepsilon_0$, where $A_{Y'}$ is the strict transform of $A$ on $Y'$.
	
		Pick an index $i$ and $0<\varepsilon_0\ll1$ such that additionally:
		\begin{enumerate}
			\item[(a)] $\lambda_{i-1} >\lambda_i$,
			
			\item[(b)] any $\big(K_{Y'}+B_{Y'}+M_{Y'}+\lambda_{i-1}(N_{Y'}+P_{Y'})+R\big)$-MMP over $Z$ is $(K_{Y'}+B_{Y'}+M_{Y'})$-trivial for any divisor $R$ on $ Y' $ with $\Supp R=\Supp A_{Y'}$ and whose coefficients are smaller than $\varepsilon_0$; this is possible by Proposition \ref{prop:Bir11_prop_g}, and
			
			\item[(c)] any $\big(K_{X_i'}+B_i'+M_i'+\lambda_{i-1}(N_i'+P_i')+\varepsilon A_i'\big)$-MMP over $Z$ is $\big(K_{X_i'}+B_i'+M_i'+\lambda_{i-1}(N_i'+P_i')\big)$-trivial for any $0<\varepsilon\leq \varepsilon_0$; this is again possible by Proposition \ref{prop:Bir11_prop_g}.
		\end{enumerate}
		
		\medskip
	
		\emph{Step 7.} Since the map $X'\dashrightarrow X_i'$ is $\big(K_{X'}+B'+M'+\lambda_{i-1}(N'+P')\big)$-negative, it is also $\big(K_{X'}+B'+M'+\lambda_{i-1}(N'+P')+\varepsilon A'\big)$-negative for $0<\varepsilon\leq\varepsilon_0$, possibly by taking a smaller $\varepsilon_0$. In particular, the g-pair $\big(X_i'/Z,B_i'+M_i'+\lambda_{i-1}(N_i'+P_i')+\varepsilon A_i'\big)$ is $\Q$-factorial dlt, where $A_i'$ is the strict transform of $A'$ on $X_i'$. 
					
		Pick $0<\varepsilon'<\varepsilon_0$. Let $\|\cdot\|$ by the maximum componentwise norm on the space of divisors on $X_i'$. Since the classes of the components of $A_i'$ generate $N^1(X_i'/Z)$, there exist an $\R$-divisor $H$ with $\Supp H=\Supp A_i'$ and $\|H\|\ll \varepsilon_0-\varepsilon'$ which is ample over $Z$. Set $H':=\varepsilon'A_i'-H$. Then $0\leq H'\leq\varepsilon_0 A_i'$. By replacing now $H$ with a general divisor $\R$-linearly equivalent to $H$ and by replacing $\varepsilon_0$ with $\varepsilon'$, we may assume that $H+H'\sim_\R\varepsilon_0A_i'$ and that the g-pair $\big(X_i'/Z, B_i'+M_i'+\lambda_{i-1}(N_i'+P_i')+\varepsilon (H+H')\big)$ is dlt for all $0\leq\varepsilon\leq1$. 
		
		By \cite[Lemma 3.5]{HanLi18} there exists a klt pair $(X_i', \Delta_i')$ such that
		\begin{align*}
			K_{X_i'}+\Delta_i' & \sim_\R K_{X_i'}+B_i'+M_i'+\lambda_{i-1}(N_i'+P_i')+H+H'\\
			& \sim_\R K_{X_i'}+B_i'+M_i'+\lambda_{i-1}(N_i'+P_i')+\varepsilon_0 A_i'.
		\end{align*}
		By \cite{BCHM10} we may run a $(K_{X_i'}+\Delta_i')$-MMP over $Z$ with scaling of an ample divisor: this is clearly a $(K_{X_i'}+B_i'+M_i'+\lambda_{i-1}(N_i'+P_i')+\varepsilon_0 A_i')$-MMP, which terminates with a minimal model $\big(T/Z, B_T+M_T+\lambda_{i-1} (N_T+P_T)+\varepsilon_0 A_T\big)$. Since the divisor $A_i'$ is movable over $Z$, then so is the divisor $K_{X_i'}+B_i'+M_i'+\lambda_{i-1}(N_i'+P_i')+\varepsilon_0 A_i'$, hence
		\begin{equation}\label{eq:7}
			\text{$X_i'$ and $T$ are isomorphic in codimension $1$.}
		\end{equation}
		By (c) we have that
		\begin{equation}\label{eq:8}
			K_T+B_T+M_T+\lambda_{i-1} (N_T+P_T)\text{ is nef over }Z,	
		\end{equation}
		as well as $K_T+B_T+M_T+\lambda_{i-1} (N_T+P_T)+\varepsilon_0 A_T$. Therefore, since the classes of the components of $A_T$ generate $N^1(T/Z)$, there exists a divisor $0 < D_T \leq\varepsilon_0 A_T$ such that 
		\begin{equation}\label{eq:6a}
			K_T+B_T+M_T+\lambda_{i-1} (N_T+P_T)+D_T\text{ is ample over }Z.	
		\end{equation}
		
		\medskip
	
		\emph{Step 8.} 
		Note that $Y'$ and $T$ are isomorphic in codimension $1$ by the assumption in Step 6 and by \eqref{eq:7}. If $D_{Y'}$ is the strict transform of $D_T$ on $Y'$, then
		\begin{equation}\label{eq:88a}
			K_{Y'}+B_{Y'}+M_{Y'}+\lambda_{i-1} (N_{Y'}+P_{Y'})+D_{Y'}\text{ is movable over }Z
		\end{equation}
		by \eqref{eq:6a}.
		
		 By (b) in Step 6 we can run a $\big(K_{Y'}+B_{Y'}+M_{Y'}+\lambda_{i-1}(N_{Y'}+P_{Y'})+D_{Y'}\big)$-MMP over $Z$ with scaling of an ample divisor which is $\big(K_{Y'}+B_{Y'}+M_{Y'}\big)$-trivial. By \eqref{eq:88a}, this MMP consists only of flips. Hence, if $\big(Y'', B_{Y''}+M_{Y''}+\lambda_{i-1} (N_{Y''}+P_{Y''})+D_{Y''})$ is the resulting minimal model over $ Z $, then $Y''$ and $T$ are $\Q$-factorial varieties which are isomorphic in codimension $1$,
	 	\begin{equation}\label{eq:9}
			K_{Y''}+B_{Y''}+M_{Y''}\text{ is nef over }Z,
		\end{equation}
		and
		\begin{equation}\label{eq:88b}
			K_{Y''}+B_{Y''}+M_{Y''}+\lambda_{i-1} (N_{Y''}+P_{Y''})+D_{Y''}\text{ is nef over }Z.	
		\end{equation}
	 	By \cite[Lemma 1.7]{HK00}, \eqref{eq:6a} and \eqref{eq:88b}, the map $T\dashrightarrow Y''$ is a morphism, hence an isomorphism by \cite[Lemma 2.1.4]{Fuj17}. Hence, by \eqref{eq:8} and \eqref{eq:9},
	 	\begin{equation}\label{eq:10}
			K_T+B_T+M_T+\lambda_{i-1} (N_T+P_T) \text{ and } K_T+B_T+M_T \text{ are nef over }Z.
		\end{equation}
		Since $\lambda_{i-1}>\lambda_i>0$ by (a) in Step 6, \eqref{eq:10} implies that also
	 	\begin{equation}\label{eq:11}
			K_T+B_T+M_T+\lambda_i(N_T+P_T)\text{ is nef over }Z.
		\end{equation}
		
		\medskip
	
		\emph{Step 9.} 
		Let $(r,s)\colon V \to X_i'\times T$ be a resolution of indeterminacies. By the Negativity lemma and by \eqref{eq:10} we have
		$$ r^*\big(K_{X_i'}+B_i'+M_i'+\lambda_{i-1} (N_i'+P_i')\big) \sim_\R s^*\big(K_T+B_T+M_T+\lambda_{i-1} (N_T+P_T)\big),$$
		whereas the Negativity lemma and \eqref{eq:11} imply
		$$ r^*\big(K_{X_i'}+B_i'+M_i'+\lambda_i (N_i'+P_i')\big) \sim_\R s^*\big(K_T+B_T+M_T+\lambda_i (N_T+P_T)\big).$$
		Since $\lambda_{i-1}>\lambda_i>0$ by (a) in Step 6, the two relations above yield
		$$ r^*(K_{X_i'}+B_i'+M_i') \sim_\R s^*(K_T+B_T+M_T),$$
		hence $K_{X_i'}+B_i'+M_i'$ is nef over $Z$. This is a contradiction which finishes the proof.
	\end{proof}

	\section{Proofs of main results}
	\label{section:Proofs}
	
	In this section we first obtain analogues of \cite[Proposition 6.2 and Theorem 1.7]{HH20} in the setting of g-pairs, see Proposition \ref{prop:HH19_prop_g} and Theorem \ref{thm:HH19_thm_g}. We prove afterwards the results announced in the introduction.
	
	\begin{prop}\label{prop:HH19_prop_g}
		Assume the existence of minimal models for smooth varieties of dimension $ n-1 $.
		
		Let $ (X/Z,B+M) $ be a $ \Q $-factorial NQC log canonical g-pair of dimension $ n $. Assume that $ (X,B+M) $ has a minimal model in the sense of Birkar-Shokurov over $ Z $ or that $ K_X+B+M $ is not pseudoeffective over $ Z $. Then there exists a $ (K_X + B + M) $-MMP over $Z$ which terminates.
	\end{prop}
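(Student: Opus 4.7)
The approach follows the strategy of \cite{HH20}, adapted to the setting of generalised pairs by means of Theorem \ref{thm:HL18_term_with_scaling} and Lemma \ref{lem:MM_existence_HH19}. Concretely, I would first choose an effective $\R$-divisor $A$ on $X$ which is ample over $Z$, sufficiently general so that the g-pair $\big(X,(B+A)+M\big)$ is log canonical (in fact dlt), and scaled up so that $K_X+B+A+M$ is nef over $Z$. Then I would run the $(K_X+B+M)$-MMP over $Z$ with scaling of $A$, denote by $\lambda_i$ the associated nef thresholds and set $\lambda := \lim_{i\to\infty}\lambda_i$, aiming to verify the hypotheses of Theorem \ref{thm:HL18_term_with_scaling} (applied with $N=A$ and $P=0$) so that termination follows.

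The central verification is that the g-pair $\big(X,(B+\lambda A)+M\big)$ has a minimal model in the sense of Birkar-Shokurov over $Z$. Using a common resolution of the birational contraction $X\dashrightarrow X_i$ together with the Negativity lemma, the nef over $Z$ divisor $K_{X_i}+B_i+M_i+\lambda_i A_i$ pulls back to show that $K_X+B+\lambda_i A+M$ is pseudoeffective over $Z$ for every $i$; passing to the limit, so is $K_X+B+\lambda A+M$. If $\lambda>0$, the NQC log canonical g-pair $\big(X,(B+\lambda A)+M\big)$ satisfies the hypotheses of Lemma \ref{lem:MM_existence_HH19} applied with $N=\lambda A$ and $P=0$ (using either branch of our hypothesis on $(X,B+M)$), which yields the desired minimal model in the sense of Birkar-Shokurov. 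If $\lambda=0$, then the pseudoeffectivity of $K_X+B+M=K_X+B+\lambda A+M$ rules out the non-pseudoeffective branch of the hypothesis, so $(X,B+M)$ itself must have a minimal model in the sense of Birkar-Shokurov by assumption.

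The remaining hypothesis of Theorem \ref{thm:HL18_term_with_scaling}, namely that $\lambda\neq\lambda_i$ for every $i$, encapsulates the \emph{careful choice of $A$} alluded to in the introduction. I expect this to be the main obstacle: following the beautiful idea of \cite{HH20}, one uses an NQC weak Zariski decomposition of $K_X+B+M$, which is available by Remark \ref{rem:MMimplWZD} in the case where $(X,B+M)$ has a minimal model in the sense of Birkar-Shokurov, and which in the non-pseudoeffective case can be produced on a suitable birational model via an auxiliary reduction to a Mori fibre space combined with Theorem \ref{thm:WZD_existence_g}, in order to pick $A$ so that the strict inequality $\lambda_i>\lambda$ persists throughout the MMP. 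In the borderline situation where $\lambda$ would be achieved at some step $i_0$, an extra argument (perturbing $A$ by a second ample divisor, or applying BCHM-type results on a dlt blowup when $\lambda>0$) is needed to either terminate the MMP directly or to reduce to the strict-inequality case. Once both conditions are met, Theorem \ref{thm:HL18_term_with_scaling} delivers the termination of the chosen MMP.
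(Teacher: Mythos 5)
Your overall framework is the same as the paper's: run a $(K_X+B+M)$-MMP with scaling of a carefully chosen ample divisor $A$, observe that $K_X+B+\lambda A+M$ is pseudoeffective over $Z$, invoke Lemma \ref{lem:MM_existence_HH19} (together with Remark \ref{rem:MMimplWZD}, or directly the hypothesis when $\lambda=0$) to get a minimal model in the sense of Birkar-Shokurov for $\big(X,(B+\lambda A)+M\big)$, and conclude by Theorem \ref{thm:HL18_term_with_scaling}. Your case distinction on $\lambda>0$ versus $\lambda=0$ is also correctly handled.

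However, you misidentify the mechanism for the crucial step, namely that the nef thresholds satisfy $\lambda_i>\lambda_{i+1}$ for every $i$ (so that in particular $\lambda\neq\lambda_i$). You attribute this to a choice of $A$ guided by an NQC weak Zariski decomposition of $K_X+B+M$, but weak Zariski decompositions play no role whatsoever in that step. The key idea from \cite{HH20}, which the paper follows, is arithmetic: after first reducing (by a preliminary MMP) to the case where any $(K_X+B+M)$-MMP consists only of flips, one decomposes
$K_X+B+M=\sum_{j=1}^m r_j\big(K_X+B^{(j)}+M^{(j)}\big)$ with $\Q$-g-pairs $\big(X,B^{(j)}+M^{(j)}\big)$ and $\sum r_j=1$, chooses $\Q$-divisors $A^{(1)},\dots,A^{(d)}$ ample over $Z$ whose classes form a basis of $N^1(X/Z)$, and sets $A=\sum_k\alpha_kA^{(k)}$ where the $\alpha_k$ are positive reals \emph{linearly independent over $\Q(r_1,\dots,r_m)$}. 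If $\lambda_i=\lambda_{i+1}$ at some step, then taking curves $C$ and $C'$ contracted by the flipped and the next flipping contraction respectively, the ratio $\beta:=\dfrac{(K_{X_{i+1}}+B_{i+1}+M_{i+1})\cdot C}{(K_{X_{i+1}}+B_{i+1}+M_{i+1})\cdot C'}$ is a negative element of $\Q(r_1,\dots,r_m)$, each $A^{(k)}_{i+1}\cdot(C-\beta C')\in\Q(r_1,\dots,r_m)$, and $A_{i+1}\cdot(C-\beta C')=0$; the linear independence of the $\alpha_k$ then forces $A^{(k)}_{i+1}\cdot(C-\beta C')=0$ for all $k$, giving $C\equiv\beta C'$ in $N_1(X_{i+1}/Z)$, which is absurd since $\beta<0$. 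This is what the \enquote{careful choice of $A$} means; neither a second generic perturbation of $A$ nor a BCHM argument on a dlt blowup as you suggest would produce the strict inequality.
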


	\begin{proof}
		We follow closely the proof of \cite[Proposition 6.2]{HH20}.
		
		\medskip
		
		First, we may run a $ (K_X + B + M) $-MMP over $Z$ such that we obtain a g-pair $(Y,B_Y+M_Y)$ with the property that any $ (K_Y + B_Y + M_Y) $-MMP over $Z$ consists only of flips. In addition to this, either $ (Y,B_Y+M_Y) $ has a minimal model in the sense of Birkar-Shokurov over $ Z $ by \cite[Corollary 3.20]{HaconLiu21} or $ K_Y+B_Y+M_Y $ is not pseudoeffective over $ Z $. 
		Therefore, by replacing $(X,B+M)$ with $(Y,B_Y+M_Y)$, we may assume that any $ (K_X + B + M) $-MMP over $Z$ consists only of flips.
				
		By \cite[Section 3.3]{HanLi18}, we may find positive real numbers $r_1,\dots,r_m$ and $\Q$-divisors $B^{(1)},\dots, B^{(m)}$ and $M^{(1)},\dots, M^{(m)}$ such that 
		$$\sum_{j=1}^m r_j=1, \quad B = \sum_{j=1}^m r_j B^{(j)},\quad M = \sum_{j=1}^m r_j M^{(j)}, $$
		and each g-pair $(X,B^{(j)}+M^{(j)})$ is log canonical. In particular, each divisor $ K_X + B^{(j)} + M^{(j)} $ is $ \Q $-Cartier and we have
		\begin{equation}\label{eq:1_HH19_prop_g}
			K_X + B + M = \sum_{j=1}^m r_j \big(K_X + B^{(j)} + M^{(j)} \big) . 
		\end{equation}
		
		Next, set $d := \dim_\R N^1(X/Z)$ and fix positive real numbers $\alpha_1,\dots,\alpha_d$ which are linearly independent over the field $\Q(r_1,\dots, r_m)$. Pick $\Q$-divisors $A^{(1)},\dots,A^{(d)}$ on $ X $ which are ample over $ Z $ such that their classes form a basis of $N^1(X/Z)$ and such that, setting
		\begin{equation}\label{eq:33}
			A := \alpha_1 A^{(1)}+\dots + \alpha_d A^{(d)},
		\end{equation}
		the g-pair $ \big(X/Z, (B+A)+M \big) $ is log canonical and the divisor $K_X+B+M+A$ is nef over $Z$.
		
		Now, we run a $ (K_X + B + M) $-MMP over $Z$ with scaling of $ A $ -- recall that it consists only of flips:
		\begin{center}
			\begin{tikzcd}[column sep = 0.8em, row sep = 2em]
				(X_1,B_1+M_1) \arrow[dr, "\theta_1" swap] \arrow[rr, dashed, "\pi_1"] && (X_2,B_2+M_2) \arrow[dl, "\theta_1^+"] \arrow[dr, "\theta_2" swap] \arrow[rr, dashed, "\pi_2"] && (X_3,B_3+M_3) \arrow[dl, "\theta_2^+"] \arrow[rr, dashed, "\pi_3"] && \cdots \\
				& Z_1 && Z_2
			\end{tikzcd}
		\end{center}
		where $(X_1,B_1+M_1) := (X,B+M)$. We assume that this MMP does not terminate and we will derive a contradiction. 
		
		To this end, note first that each $X_i$ is $\Q$-factorial, and let $B_i$, $ B_i^{(j)} $, $M_i$, $ M_i^{(j)} $, $A_i$ and $ A_i^{(j)} $ denote the pushforwards on $X_i$ of $B$, $ B^{(j)} $, $M$, $ M^{(j)} $, $A$ and $ A^{(j)} $, respectively. For each positive integer $ i $ set 
		\[ \lambda_i := \inf  \{ t \in \R_{\geq 0} \mid K_{X_i}+B_i + M_i + t A_i \text{ is nef over } Z  \} \]
		and note that $ \lambda_i \geq \lambda_{i+1} $ for every $ i $.
		
		\medskip
		
		We claim that
		\begin{equation}\label{eq:HH19_prop_g}
			\lambda_i > \lambda_{i+1}\quad\text{for every }i.
		\end{equation}
		This will immediately imply the proposition. Indeed, set $\lambda := \lim\limits_{i\to \infty}\lambda_i$ and note that $\lambda < \lambda_i$ for every $ i $ by \eqref{eq:HH19_prop_g}. Since each divisor $ K_{X_i}+B_i + M_i + \lambda_i A_i $ is nef over $ Z $, the divisor $K_X+B+M+\lambda A$ is pseudoeffective over $Z$. It follows from the assumptions of the proposition and from Remark \ref{rem:MMimplWZD} and Lemma \ref{lem:MM_existence_HH19} that the g-pair $\big(X,(B+\lambda A)+M\big)$ has a minimal model in the sense of Birkar-Shokurov over $Z$, hence the above MMP terminates by Theorem \ref{thm:HL18_term_with_scaling}, a contradiction.
		
		\medskip
		
		It remains to prove \eqref{eq:HH19_prop_g}. Assume, for contradiction, that $\lambda_{i}=\lambda_{i+1}$ for some $i$. Pick a curve $C$ on $X_{i+1}$ which is contracted by the flipped contraction $ \theta_i^+\colon X_{i+1}\to Z_i $ and a curve $C'$ on $X_{i+1}$ which is contracted by the flipping contraction $ \theta_{i+1} \colon X_{i+1}\to Z_{i+1}$. The divisors $K_{X_{i+1}}+B_{i+1}^{(j)}+M_{i+1}^{(j)}$ are $\Q$-Cartier, and we have
		\begin{equation}\label{eq:3b}
			(K_{X_{i+1}}+B_{i+1}+M_{i+1})\cdot C > 0\quad\text{and}\quad (K_{X_{i+1}}+B_{i+1}+M_{i+1})\cdot C' < 0.
		\end{equation}
		Hence, by \eqref{eq:1_HH19_prop_g} we obtain
		\begin{equation}\label{eq:3}
			\beta := \frac{(K_{X_{i+1}}+B_{i+1}+M_{i+1})\cdot C}{(K_{X_{i+1}}+B_{i+1}+M_{i+1})\cdot C'}\in \Q(r_1,\dots, r_m)\cap(-\infty,0).
		\end{equation}						
		Since the divisors $A_{i+1}^{(k)}$ are $ \Q $-Cartier, we have 
		\begin{equation}\label{eq:333}
			A_{i+1}^{(k)}\cdot (C-\beta C')\in\Q(r_1,\dots, r_m)\quad\text{for every } k \in \{1,\dots,d\}.
		\end{equation}
			
		By construction of the $ (K_X + B + M) $-MMP over $Z$ with scaling of $ A $ and by \cite[Theorem 1.3(4)(c)]{HaconLiu21}, we have
			$$ (K_{X_{i+1}}+B_{i+1}+M_{i+1}+\lambda_{i}A_{i+1})\cdot C=0 $$
			and
			$$ (K_{X_{i+1}}+B_{i+1}+M_{i+1}+\lambda_{i+1}A_{i+1})\cdot C'=0 . $$
			As $\lambda_{i}=\lambda_{i+1}$, the last two equations and \eqref{eq:3} yield
			$$ A_{i+1} \cdot (C - \beta C') = 0 ,$$
			which, together with \eqref{eq:33}, implies
			$$ \alpha_1A_{i+1}^{(1)}\cdot(C-\beta C')+\dots+\alpha_dA_{i+1}^{(d)}\cdot(C-\beta C')=0 . $$
			Since $ \alpha_1,\dots, \alpha_d$ are linearly independent over $\Q(r_1,\dots, r_m)$, it follows from the last equation and from \eqref{eq:333} that 
		\begin{equation}\label{eq:3a}
			A_{i+1}^{(k)}\cdot(C-\beta C')=0\quad\text{for every } k \in \{1,\dots,d\}.
		\end{equation}
		
		Now, since the maps $(\pi_i)_* \colon N^1(X_i/Z) \to N^1(X_{i+1}/Z)$ are isomorphisms for each $i$, the classes of $A_{i+1}^{(1)}, \dots, A_{i+1}^{(d)}$ form a basis of $N^1(X_{i+1}/Z)$ for all $i$. This, together with \eqref{eq:3a}, implies that the class of $C-\beta C'$ is $0$ in $N_1(X_{i+1}/Z)$. But this is impossible since $\beta<0$. This proves \eqref{eq:HH19_prop_g} and finishes the proof of the proposition.
	\end{proof}
	
	\begin{thm}\label{thm:HH19_thm_g}
		Assume the existence of minimal models for smooth varieties of dimension $ n-1 $.
		
		Let $ \big( X/Z, (B+N) + (M+P) \big) $ be a $ \Q $-factorial NQC log canonical g-pair such that the divisor $ K_X + B + N + M + P $ is nef over $ Z $. Assume that $ (X,B+M) $ has a minimal model in the sense of Birkar-Shokurov over $ Z $ or that $ K_X+B+M $ is not pseudoeffective over $ Z $. Then there exists a $ (K_X + B + M) $-MMP over $Z$ with scaling of $P+N$ that terminates.
	\end{thm}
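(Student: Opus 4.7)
The plan is to run the $(K_X+B+M)$-MMP over $Z$ with scaling of $P+N$, let $\lambda_i$ denote the corresponding nef thresholds, and set $\lambda := \lim\limits_{i\to\infty}\lambda_i \in [0,1]$. I will then conclude termination by invoking Theorem \ref{thm:HL18_term_with_scaling}, whose two hypotheses are $(i)$ $\lambda \neq \lambda_i$ for every $i$ and $(ii)$ the g-pair $\big(X,(B+\lambda N)+(M+\lambda P)\big)$ has a minimal model in the sense of Birkar--Shokurov over $Z$.

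Hypothesis $(ii)$ will be supplied by Lemma \ref{lem:MM_existence_HH19}. To apply it, I first observe that since each $K_{X_i}+B_i+M_i+\lambda_i(N_i+P_i)$ is nef over $Z$ along the MMP, the Negativity lemma gives that $K_X+B+M+\lambda_i(N+P)$ is pseudoeffective over $Z$ for every $i$, and letting $i\to\infty$ shows that $K_X+B+M+\lambda(N+P)$ is pseudoeffective over $Z$. Our hypothesis together with Remark \ref{rem:MMimplWZD} yields that either $(X,B+M)$ admits an NQC weak Zariski decomposition over $Z$, or $K_X+B+M$ is not pseudoeffective over $Z$. Hence Lemma \ref{lem:MM_existence_HH19} applied to the g-pair $\big(X,(B+\lambda N)+(M+\lambda P)\big)$ produces the required Birkar--Shokurov minimal model over $Z$.

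For hypothesis $(i)$, the clean case is $\lambda < \lambda_i$ for every $i$, in which Theorem \ref{thm:HL18_term_with_scaling} applies directly and we are done. Otherwise there exists $i_0$ with $\lambda_{i_0}=\lambda$, and the monotonicity of the $\lambda_i$'s forces $\lambda_j=\lambda$ for every $j\geq i_0$. The subcase $\lambda=0$ is trivial since then $K_{X_{i_0}}+B_{i_0}+M_{i_0}$ is already nef over $Z$ and the MMP has terminated at step $i_0$. The main obstacle is thus the residual subcase $\lambda>0$ with constant tail nef threshold: here the tail of the MMP consists of $(K_X+B+M)$-flips which are $(K_X+B+M+\lambda(N+P))$-trivial, while $K_X+B+M+\lambda(N+P)$ remains nef throughout. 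I plan to dispose of this case either by making a sufficiently generic choice of extremal ray at step $i_0$ so as to force $\lambda_{i_0+1}<\lambda$ and fall back into the preceding subcase, or by invoking a finiteness result for small $\Q$-factorial modifications of $X_{i_0}$ on which $K_X+B+M+\lambda(N+P)$ remains nef, which should be a consequence of the Birkar--Shokurov minimal model for $\big(X,(B+\lambda N)+(M+\lambda P)\big)$ established above.
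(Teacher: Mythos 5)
Your proposal correctly identifies the two hypotheses needed to apply Theorem \ref{thm:HL18_term_with_scaling} and correctly handles hypothesis (ii) via Lemma \ref{lem:MM_existence_HH19}, but it leaves the decisive case unresolved. The residual subcase $\lambda>0$ with $\lambda_j=\lambda$ for all $j\geq i_0$ is precisely the hard part of the theorem, and neither of your two proposed fixes actually closes it. Your first suggestion, making a generic choice of extremal ray at step $i_0$, cannot work as stated: the MMP with scaling forces you to contract rays on which $K_{X_{i_0}}+B_{i_0}+M_{i_0}+\lambda(N_{i_0}+P_{i_0})$ is numerically trivial, and there is no reason why a different choice of such a ray should push the nef threshold below $\lambda$. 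Your second suggestion, a finiteness statement for $\Q$-factorial models of $X_{i_0}$ on which $K_X+B+M+\lambda(N+P)$ stays nef, is essentially the log canonical g-pair analogue of \cite[Corollary 1.4.2]{BCHM10}; that finiteness is exactly what is being proved here, not something one can appeal to as a known black box, so the argument becomes circular.

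The paper avoids the constant-tail scenario altogether by constructing a specific MMP with scaling whose nef thresholds are strictly decreasing. Concretely, from $\lambda_1>0$ it uses Proposition \ref{prop:Bir11_prop_g} to find $\xi_1\in[0,\lambda_1)$ such that any $\big(K_{X_1}+B_1+M_1+\xi_1(N_1+P_1)\big)$-MMP is automatically $\big(K_{X_1}+B_1+M_1+\lambda_1(N_1+P_1)\big)$-trivial, hence a legitimate MMP with scaling of $P_1+N_1$, and then invokes Proposition \ref{prop:HH19_prop_g} to obtain such an MMP that terminates; this produces either a Mori fibre space or a model $X_2$ with $\lambda_2\leq\xi_1<\lambda_1$. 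Iterating gives a strictly decreasing sequence of thresholds, and the limit case $\lambda=\lim\lambda_i$ is then handled exactly as you intended, by Lemma \ref{lem:MM_existence_HH19} and Theorem \ref{thm:HL18_term_with_scaling}. Proposition \ref{prop:HH19_prop_g} is where all the work actually happens: it implements the ample-basis trick of \cite{HH20}, choosing $A=\sum\alpha_k A^{(k)}$ with $\alpha_1,\dots,\alpha_d$ linearly independent over $\Q(r_1,\dots,r_m)$ and using rationality of the intersection numbers to rule out $\lambda_i=\lambda_{i+1}$. Your proposal does not mention either Proposition \ref{prop:Bir11_prop_g} or Proposition \ref{prop:HH19_prop_g}, and without some replacement for this step the proof does not go through; since the theorem only asserts the \emph{existence} of a terminating MMP with scaling, the right move is to construct a good one rather than to argue about an arbitrary one.
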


	\begin{proof}
		We follow closely the proof of \cite[Theorem 1.7]{HH20}.
		
		\medskip
		
		We set $ X_1 := X $, $ B_1 := B $, $ M_1 := M $, $ N_1 := N $, $ P_1 := P $, and 
		\[ \lambda_1 := \inf  \{ t \in \R_{\geq 0} \mid K_{X_1} + B_1 + M_1 + t (N_1 + P_1) \text{ is nef over } Z  \} . \]
		If $\lambda_1=0$, then we are done, so we may assume that $\lambda_1>0$.
		
		Note that if a divisor $ K_{X_1}+B_1+ M_1 + s(N_1+P_1) $ is pseudoeffective over $ Z $ for some $0\leq s\leq\lambda_1$, then by the assumptions of the theorem, Remark \ref{rem:MMimplWZD} and Lemma \ref{lem:MM_existence_HH19}, the g-pair $ \big( X_1, (B_1 + s N_1) + (M_1 + s P_1) \big) $ has a minimal model in the sense of Birkar-Shokurov over $ Z $. By Proposition \ref{prop:Bir11_prop_g} there exists $\xi_1 \in [0,\lambda_1)$ such that any $ \big(K_{X_1}+B_1+ M_1 + \xi_1(N_1+P_1) \big)$-MMP over $Z$ is  $ \big( K_{X_1}+B_1+M_1 + \lambda_1(N_1+P_1) \big)$-trivial, and thus also a $(K_{X_1}+B_1+M_1)$-MMP over $Z$ with scaling of $P_1+N_1$. Therefore, by Proposition \ref{prop:HH19_prop_g} there exists a $\big( K_{X_1}+B_1+M_1+\xi_1(N_1+P_1) \big)$-MMP over $Z$ which terminates either with a minimal model or with a Mori fibre space $\big(X_2,(B_2+\xi_1 N_2)+(M_2+\xi_1 P_2\big)$ of $ \big(X_1,(B_1 + \xi_1 N_1) + (M_1 + \xi_1 P_1) \big) $ over $Z$, and which is also a $\big(K_{X_1}+B_1+M_1+ s(N_1+P_1)\big)$-MMP over $Z$ with scaling of $P_1+N_1$ for all $s\in[0,\xi_1)$. In particular, if for some $0\leq s\leq\xi_1$ a g-pair $ \big(X_1,(B_1 + s N_1) + (M_1 + s P_1) \big) $ has a minimal model in the sense of Birkar-Shokurov over $ Z $, then the g-pair $ \big(X_2,(B_2 + s N_2) + (M_2 + s P_2) \big) $ has a minimal model in the sense of Birkar-Shokurov over $ Z $ by \cite[Corollary 3.20]{HaconLiu21}.
		
		Now, we distinguish two cases. If $ \big(X_2,(B_2 + \xi_1 N_2) + (M_2 + \xi_1 P_2) $ is a Mori fibre space of $ \big(X_1,(B_1 + \xi_1 N_1) + (M_1 + \xi_1 P_1) \big) $ over $Z$, then $ (X_2,B_2+M_2) $ is a Mori fibre space of $ (X_1,B_1+M_1) $ over $Z$ by construction and we are done. 
		Otherwise, the g-pair $ \big(X_2,(B_2 + \xi_1 N_2) + (M_2 + \xi_1 P_2) \big) $ is a minimal model of $ \big(X_1,(B_1 + \xi_1 N_1) + (M_1 + \xi_1 P_1) \big) $ over $Z$, and then we set
		\[ \lambda_2 := \inf  \{ t \in \R_{\geq 0} \mid K_{X_2} + B_2 + M_2 + t (N_2+P_2) \text{ is nef over } Z  \} , \]
		and we observe that $0 \leq \lambda_2 \leq \xi_1 < \lambda_1$ by construction. 
		
		By repeating the above procedure, we obtain a diagram
		$$ (X_1,B_1+M_1) \dashrightarrow (X_2,B_2+M_2) \dashrightarrow (X_3,B_3+M_3) \dashrightarrow \cdots $$
		where each map $ X_i \dashrightarrow X_{i+1} $ is a sequence of steps of a $(K_{X_i}+B_i+M_i)$-MMP over $Z$ with scaling of $P_i+N_i$, and if $\lambda_i$ is the corresponding nef threshold on $X_i$, then either $\lambda_i=0$ for some index $i$, in which case we are done, or the sequence $\{\lambda_i\}$ is strictly decreasing, in which case we set $\lambda:=\lim\limits_{i\to\infty}\lambda_i$. Then the divisor $K_{X_1}+B_1+M_1+\lambda (N_1 + P_1) $ is pseudoeffective over $Z$, so the g-pair $\big(X_1,(B_1+\lambda N_1)+(M_1 + \lambda P_1)\big)$ has a minimal model in the sense of Birkar-Shokurov over $Z$ by the assumptions of the theorem and by Lemma \ref{lem:MM_existence_HH19}. Consequently, by Theorem \ref{thm:HL18_term_with_scaling} this $ (K_{X_1} + B_1 + M_1) $-MMP over $Z$ with scaling of $ P_1 + N_1 $ terminates.
	\end{proof}
	
	We can now prove quickly all the results announced in the introduction.

	\begin{proof}[Proof of Theorem \ref{thm:mainthm}]
		By assumption, either $ K_X+B+M $ is not pseudoeffective over $ Z $ or $ (X,B+M) $ admits an NQC weak Zariski decomposition over $ Z $, in which case it has a minimal model in the sense of Birkar-Shokurov over $Z$ by \cite[Theorem 4.4(i)]{LT22}. Thus, the first statement of the theorem follows immediately from Theorem \ref{thm:HH19_thm_g}. The last sentence of the theorem follows by setting $N=0$ and by taking $P$ to be a sufficiently ample $\R$-divisor on $X$ over $Z$. 
	\end{proof}
	
	\begin{proof}[Proof of Theorem \ref{thm:MM_uniruled_g}]
		The g-pair $ (X,B+M) $ has a minimal model in the sense of Birkar-Shokurov over $Z$ by \cite[Theorem 4.3]{LT22}. We conclude by Remark \ref{rem:MMimplWZD} and by Theorem \ref{thm:mainthm}.
	\end{proof}
	
	\begin{proof}[Proof of Corollary \ref{cor:MM_bigger_boundary_g}]
	The proof is the same as that of Lemma \ref{lem:MM_bigger_boundary_g}, once we invoke Theorem \ref{thm:mainthm} instead of \cite[Theorem 4.4(i)]{LT22}.
	\end{proof}
	
	\begin{proof}[Proof of Corollary \ref{cor:MM_impl_g}]
		The result follows immediately from \cite[Lemma 2.15]{LT22} and Corollary \ref{cor:MM_bigger_boundary_g}.
	\end{proof}
	
	\begin{proof}[Proof of Corollary \ref{cor:maincor}]
		The existence of minimal models for terminal $ 4 $-folds over $ Z $ follows from \cite[Theorem 5-1-15]{KMM87}. Therefore, Remark \ref{rem:MMimplWZD} and \cite[Theorem E]{LT22} imply the existence of NQC weak Zariski decompositions for pseudoeffective NQC log canonical g-pairs of dimension $ 4 $. Hence, (i) and (ii) follow from Theorem \ref{thm:mainthm}, while (iii) follows from Theorem \ref{thm:MM_uniruled_g}.
	\end{proof}

	\section*{Appendix, written jointly with Xiaowei Jiang}
	\setcounter{section}{1}
	\setcounter{thm}{0}
	\renewcommand\thesection{\Alph{section}}
	
	In this appendix we establish the existence of Mori fibre spaces for non-pseudoeffective $\Q$-factorial NQC log canonical generalised pairs of arbitrary dimension. We present two different proofs below. In the following result we summarise previous knowledge concerning the existence of Mori fibre spaces for non-pseudoeffective generalised pairs. 
	
	\begin{thm}
		Let $ (X/Z,B+M) $ be an NQC log canonical g-pair such that $ K_X+B+M$ is not pseudoeffective over $ Z $. Then the following statements hold:
		\begin{enumerate}
			\item If $ M=0 $, then the pair $ (X,B) $ has a Mori fibre space over $ Z $.
			
			\item If $ (X,0) $ is $ \Q $-factorial klt, then the g-pair $ (X,B+M) $ has a Mori fibre space over $ Z $.
		\end{enumerate}
	\end{thm}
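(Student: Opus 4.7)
The plan is to prove the two parts by largely independent arguments, each reducing to an already established termination theorem.

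For part (i), with $M=0$, the pair $(X,B)$ is a usual NQC log canonical pair, and the existence of a Mori fibre space for non-pseudoeffective log canonical pairs over an arbitrary base is by now a classical consequence of the log canonical MMP. Concretely, I would first replace $(X,B)$ by a $\Q$-factorial dlt blowup $h\colon(Y,B_Y)\to(X,B)$, so that $K_Y+B_Y\sim_{\R,Z}h^*(K_X+B)$ is not pseudoeffective over $Z$. One can then run a $(K_Y+B_Y)$-MMP with scaling of an ample divisor over $Z$---which is meaningful because log canonical flips and the cone and contraction theorems are available for $\Q$-factorial dlt pairs---and appeal to the standard termination results in the non-pseudoeffective case, see for instance \cite{Fuj17}. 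A discrepancy comparison via the Negativity lemma then shows that the resulting output descends to a Mori fibre space of $(X,B)$ over $Z$ in the sense of Definition \ref{dfn:1}(c).

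For part (ii) I would follow the strategy of Lemma \ref{lem:HL18_term_with_ample_scaling}. Pick an effective $\R$-divisor $A$ on $X$ which is ample over $Z$ such that $\big(X,(B+A)+M\big)$ is log canonical and $K_X+B+A+M$ is nef over $Z$, and run a $(K_X+B+M)$-MMP with scaling of $A$ over $Z$. Let $\lambda_i$ denote the nef thresholds and set $\lambda:=\lim_{i\to\infty}\lambda_i$. Since $K_{X_i}+B_i+M_i+\lambda_i A_i$ is nef over $Z$ at each step, pulling back to a common resolution with $X$ shows that $K_X+B+M+\lambda_i A$ is pseudoeffective over $Z$ for every $i$; if $\lambda$ were $0$, the limit as $i\to\infty$ would contradict the non-pseudoeffectiveness of $K_X+B+M$, so $\lambda>0$. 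Since $\lambda_i\geq\lambda>\lambda/2$ for every $i$, the MMP is simultaneously a $\big(K_X+B+M+\tfrac{\lambda}{2}A\big)$-MMP. By \cite[Lemma 3.5]{HanLi18}, using that $(X,0)$ is $\Q$-factorial klt and that $A$ is ample over $Z$, there exists a boundary $\Delta$ on $X$ such that $(X,\Delta)$ is klt, $\Delta$ is big over $Z$, and $K_X+\Delta\sim_{\R,Z}K_X+B+M+\tfrac{\lambda}{2}A$. Consequently the MMP is a $(K_X+\Delta)$-MMP with scaling of $A$ and terminates by \cite[Corollary 1.4.2]{BCHM10}. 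Termination with a minimal model is precluded by the non-pseudoeffectiveness of $K_X+B+M$, so the MMP must end with a Mori fibre space of $(X,B+M)$ over $Z$.

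The main obstacle is the descent step in part (i): verifying cleanly that the Mori fibre space produced on the dlt blowup $(Y,B_Y)$ induces a Mori fibre space of $(X,B)$ satisfying all the discrepancy conditions in Definition \ref{dfn:1}(c). Part (ii), by contrast, is essentially a direct translation of Lemma \ref{lem:HL18_term_with_ample_scaling} to the non-pseudoeffective setting, the only genuinely new inputs being the strict positivity of $\lambda$ and the exclusion of the minimal-model outcome, both of which are immediate from the non-pseudoeffectiveness hypothesis.
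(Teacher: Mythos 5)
The paper's own proof is a two-line citation: part (i) is \cite[Theorem 1.7]{HH20} (with the $\Q$-factorial dlt subcase attributed to \cite[Corollary 1.3.3]{BCHM10} and \cite[Theorem 4.1(ii)]{Bir12a}), and part (ii) is \cite[Lemma 4.4(1)]{BZ16}. You instead try to reprove both from scratch.

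Your argument for part (ii) is sound and is essentially a rederivation of \cite[Lemma 4.4(1)]{BZ16}: the strict positivity of $\lambda$ follows from non-pseudoeffectiveness exactly as you say, the passage through \cite[Lemma 3.5]{HanLi18} to a klt, relatively big boundary is the correct reduction, and \cite[Corollary 1.4.2]{BCHM10} gives termination. Nothing is missing there.

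Part (i) has a genuine gap, and it is precisely the step you flag as ``the main obstacle.'' Taking a dlt blowup $h\colon(Y,B_Y)\to(X,B)$ does reduce to the $\Q$-factorial dlt case covered by \cite[Corollary 1.3.3]{BCHM10}, but the resulting Mori fibre space on $Y$ does \emph{not} descend to one on $X$ by a discrepancy comparison. The $h$-exceptional divisors are log canonical places of $(X,B)$, so they appear in $B_Y$ with coefficient $1$ and the $(K_Y+B_Y)$-MMP has no mechanism forcing their contraction: if some of them survive to the Mori fibre space $Y'\to T$, then $X\dashrightarrow Y'$ is not a birational contraction and the conditions of Definition \ref{dfn:1}(c) simply fail. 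This is exactly the difficulty that makes the log canonical case strictly harder than the dlt case, and it is the content of \cite[Theorem 1.7]{HH20}, whose proof does not run along ``dlt blowup plus descent'' but rather uses a carefully chosen ample divisor with $\Q$-linearly independent coefficients together with the termination machinery (the same scheme that Proposition \ref{prop:HH19_prop_g} and Theorem \ref{thm:HH19_thm_g} of this paper generalise). So your plan for (i) does not go through as written; one should instead invoke \cite[Theorem 1.7]{HH20} directly, or redo its argument, rather than attempt a descent.
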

	
	\begin{proof}
		Part (i) follows from \cite[Theorem 1.7]{HH20}; note that the $\Q$-factorial dlt case was established in \cite[Corollary 1.3.3]{BCHM10}, see also \cite[Theorem 4.1(ii)]{Bir12a}. Part (ii) is \cite[Lemma 4.4(1)]{BZ16}.
	\end{proof}

	The first approach to derive the existence of Mori fibre spaces for non-pseudoeffective generalised pairs uses very recent results from \cite{Hash22,LX22}. The next theorem is a special case of \cite[Theorem 3.17]{Hash22} and \cite[Theorem 1.2]{LX22}.
	
	\begin{thmapp} \label{thm:HLX}
		Let $ \big(X/Z,(B+A)+M\big) $ be a $ \Q $-factorial NQC log canonical g-pair such that $ K_X + B + A + M $ is pseudoeffective over $ Z $, where $A$ is an effective $ \R $-Cartier $ \R $-divisor on $ X $ which is ample over $ Z $. Then the g-pair $ \big(X/Z,(B+A)+M\big) $ has a minimal model in the sense of Birkar-Shokurov over $ Z $.
	\end{thmapp}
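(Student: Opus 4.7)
The plan is to reduce this theorem to the existence of minimal models in the sense of Birkar-Shokurov for usual log canonical pairs with big boundary, which is known from \cite{Bir12a}. The key idea is to exploit the ampleness of $A$ to absorb the nef part $M$ into the boundary, converting the generalised pair problem into an honest pair problem.

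To perform the absorption, fix a small rational $0<\varepsilon\ll 1$, take a sufficiently high log resolution $f\colon X'\to X$ of $(X,B+A)$ with $M=f_*M'$ and an NQC decomposition $M'=\sum_j a_j M_j'$ (with $M_j'$ nef $\Q$-Cartier over $Z$), and fix a small ample $\Q$-divisor $H'$ on $X'$. For suitable $\delta_j>0$ the divisor $M_j'+\delta_j H'$ is ample $\Q$-Cartier on $X'$, so standard Bertini arguments produce general effective $\Q$-divisors $D_j'$ in its $\Q$-linear equivalence class; the weighted sum $D':=\sum_j a_j D_j'$ is then $\R$-linearly equivalent over $Z$ to $M'+\varepsilon H'$. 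Letting $B'$ denote the log pullback of $B$ via $f$ (so that $K_{X'}+B'\sim_{\R,Z}f^*(K_X+B)+E$ with $E\geq 0$ and $f$-exceptional), one arranges -- by choosing the $D_j'$ general and absorbing the defect $\varepsilon H'$ into a small perturbation of $(1-\varepsilon)f^*A$, which is big since $f^*A$ is big -- that the usual pair $(X',B'+(1-\varepsilon)f^*A+D')$ is log canonical and
\[
K_{X'}+B'+(1-\varepsilon)f^*A+D' \sim_{\R,Z} f^*(K_X+B+A+M)+E_0
\]
with $E_0\geq 0$ and $f$-exceptional. In particular, the left-hand side is pseudoeffective over $Z$ and its boundary contains the big divisor $(1-\varepsilon)f^*A$.

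Apply Birkar's theorem on the existence of BS minimal models for log canonical pairs with big boundary \cite{Bir12a} to $(X',B'+(1-\varepsilon)f^*A+D')$, obtaining a birational map $\varphi'\colon X' \dashrightarrow Y$ over $Z$ satisfying the conditions of Definition \ref{dfn:1}(a) for the usual pair. Letting $M_Y$ denote the pushforward of $M'$ through $\varphi'$, the induced map $X\dashrightarrow Y$ is then a minimal model in the sense of Birkar-Shokurov of the generalised pair $(X,(B+A)+M)$: the variety $Y$ is $\Q$-factorial, the divisor $K_Y+(B+A)_Y+M_Y$ is nef over $Z$ (by pushing forward the displayed $\R$-linear equivalence), and the required strict discrepancy inequality for $\varphi$-exceptional prime divisors on $X$ transfers from the usual pair, because the two log canonical divisors in question differ only by effective $f$-exceptional corrections already accounted for in $B'$ and $E_0$.

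The main obstacle is the Bertini-type absorption step: ensuring that the $D_j'$ can be chosen so that $(X',B'+(1-\varepsilon)f^*A+D')$ is log canonical. Since each $M_j'+\delta_j H'$ is ample $\Q$-Cartier, Bertini applied to sufficiently divisible multiples yields general effective $\Q$-divisors whose supports have normal crossings with $B'$ and meet the non-klt locus of $(X',B')$ transversally; a preliminary further log resolution may be required to clean up the situation. A secondary subtlety is matching the BS minimal model definition between the usual and generalised pair: in particular, verifying $a(F,X,B+A+M)<a(F,Y,(B+A)_Y+M_Y)$ for every $F$ contracted by $X\dashrightarrow Y$ from the corresponding inequality for the usual pair, which follows because the two log canonical divisors agree up to $\R$-linear equivalence and effective $f$-exceptional corrections.
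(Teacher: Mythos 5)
Your overall strategy coincides with the paper's: absorb the nef part $M$ into an effective boundary by exploiting the ampleness of $A$, thereby reducing to the existence of minimal models in the sense of Birkar--Shokurov for usual log canonical pairs with big boundary \cite{HH20,Bir12a}. However, your implementation via a log resolution has a genuine gap. The crux is the claimed relation $K_{X'}+B'+(1-\varepsilon)f^*A+D' \sim_{\R,Z} f^*(K_X+B+A+M)+E_0$ with $E_0\geq 0$ and $f$-exceptional, together with log canonicity of the pair on $X'$. Unwinding the accounting: with $D'\sim_{\R,Z}M'+\varepsilon H'$ one gets a defect $\varepsilon(H'-f^*A)$ in $E_0$, and since $H'$ is an arbitrary small ample on $X'$ this term is neither effective nor $f$-exceptional. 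The natural fix -- write $f^*A\sim_{\R,Z}A''+\sum t_jE_j$ with $A''$ ample over $Z$ and the $E_j$ $f$-exceptional, and replace $H'$ by $A''$ -- forces you to move the $\sum t_jE_j$ into the boundary on $X'$. But the Birkar--Shokurov boundary $B'$ already assigns coefficient $1$ to every $f$-exceptional prime divisor, so $(X',B'+\sum t_jE_j+D'')$ has boundary coefficients $>1$ on $\Exc(f)$ and fails to be log canonical. Exactly those $f$-exceptional divisors that are log canonical places of $(X,B+M)$ leave no room for the required perturbation.

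The paper sidesteps this by performing the absorption directly on $X$ (Lemma \ref{lem:hilfe2}) rather than on a resolution, and the key mechanism is the $\varepsilon$-scaling of $M$: one sets $N:=\varepsilon M$ with $\varepsilon<1$, so that $A+(1-\varepsilon)M$ remains ample and can be replaced by a general effective divisor $H$. Lemma \ref{lem:Nklt} then guarantees $\Nklt(X,B+N)=\Nklt(X,B)$, which -- combined with a resolution chosen via \cite[Lemma 5.18]{HaconLiu21} so that $\Exc(f)=\Supp F$ for $F=f^*N-N'$ -- ensures that no $f$-exceptional prime divisor is a log canonical place of $(X,B+N)$. This is precisely what creates the slack needed to perturb by an ample divisor and descend to a genuine log canonical pair $(X,\Delta)$ on $X$ with $K_X+\Delta\sim_{\R,Z}K_X+B+A+M$, after which one applies \cite[Theorems 1.5 and 1.7]{HH20}. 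Your proposal omits both the $\varepsilon$-scaling of $M$ and the $\Nklt$-invariance observation, and without them the Bertini absorption step does not go through.
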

	
	We will give a short proof of Theorem \ref{thm:HLX} below, which depends only on the results from \cite{HH20,HaconLiu21}.
	
	\medskip
		
	We deduce now the following strengthening of Proposition \ref{prop:HH19_prop_g}, which removes the assumption in lower dimensions.
		
	\begin{prop}\label{prop:HH19_prop_g_appendix}
		Let $ (X/Z,B+M) $ be a $ \Q $-factorial NQC log canonical g-pair. Assume that $ (X,B+M) $ has a minimal model in the sense of Birkar-Shokurov over $ Z $ or that $ K_X+B+M $ is not pseudoeffective over $ Z $. Then there exists a $ (K_X + B + M) $-MMP over $Z$ which terminates.
	\end{prop}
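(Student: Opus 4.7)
The plan is to run verbatim the argument of the proof of Proposition \ref{prop:HH19_prop_g}, with the only substantive change being that the appeal to Lemma \ref{lem:MM_existence_HH19} (which relied on the inductive hypothesis in dimension $n-1$) is replaced by an appeal to the unconditional Theorem \ref{thm:HLX}.

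First I would reduce to the situation where every $(K_X+B+M)$-MMP over $Z$ consists only of flips, by performing all available divisorial contractions in an initial $(K_X+B+M)$-MMP and replacing $(X,B+M)$ with the resulting model; both alternative hypotheses persist under this replacement, the Birkar-Shokurov minimal model case by \cite[Corollary 3.20]{HaconLiu21} and the non-pseudoeffectivity case trivially.

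Next, via \cite[Section 3.3]{HanLi18}, I would write $K_X+B+M = \sum_{j=1}^m r_j (K_X+B^{(j)}+M^{(j)})$ as a positive real combination of $\Q$-Cartier divisors attached to log canonical g-pairs with $\Q$-Cartier boundary and nef part, and choose an ample divisor
\[ A = \alpha_1 A^{(1)} + \dots + \alpha_d A^{(d)} \]
over $Z$, where the $A^{(k)}$ are $\Q$-ample with classes forming a basis of $N^1(X/Z)_{\R}$, the $\alpha_k$ are positive reals linearly independent over $\Q(r_1,\dots,r_m)$, and so that $\big(X,(B+A)+M\big)$ is log canonical with $K_X+B+A+M$ nef over $Z$. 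I would then run a $(K_X+B+M)$-MMP over $Z$ with scaling of $A$, yielding nef thresholds $\lambda_i$. The numerical argument establishing the strict decrease $\lambda_i > \lambda_{i+1}$ transfers unchanged from the proof of Proposition \ref{prop:HH19_prop_g}, as it is local to a single flip and invokes only the linear independence of the $\alpha_k$ over $\Q(r_1,\dots,r_m)$; no input in lower dimensions is used here.

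Setting $\lambda := \lim_{i\to\infty} \lambda_i$, the divisor $K_X+B+M+\lambda A$ is pseudoeffective over $Z$ as the limit of divisors that are nef after pullback to a common resolution. If $\lambda > 0$, then $\lambda A$ is effective and ample over $Z$, the g-pair $\big(X,(B+\lambda A)+M\big)$ is log canonical, and $K_X+B+\lambda A+M$ is pseudoeffective, so Theorem \ref{thm:HLX} furnishes a minimal model of $\big(X,(B+\lambda A)+M\big)$ in the sense of Birkar-Shokurov over $Z$. If instead $\lambda = 0$, then $K_X+B+M$ is itself pseudoeffective, which rules out the non-pseudoeffectivity alternative and forces $(X,B+M)$ to admit a Birkar-Shokurov minimal model directly by hypothesis. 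In either case, Theorem \ref{thm:HL18_term_with_scaling}, applied with $N=0$ and $P=A$, yields termination of the MMP, contradicting the assumption that it is infinite. The only genuine obstacle in the argument is precisely the step where Theorem \ref{thm:HLX} is invoked: it is here that the results from \cite{Hash22,LX22} are needed in order to dispense with any inductive hypothesis on dimension.
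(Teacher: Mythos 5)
Your proposal is correct and takes essentially the same route as the paper's own proof: repeat the proof of Proposition \ref{prop:HH19_prop_g} verbatim until the point where a Birkar-Shokurov minimal model of $\big(X,(B+\lambda A)+M\big)$ is needed, then invoke Theorem \ref{thm:HLX} (available from \cite{Hash22,LX22}) when $\lambda>0$ and the hypothesis directly when $\lambda=0$, before concluding via Theorem \ref{thm:HL18_term_with_scaling}. One small slip: since Theorem \ref{thm:HLX} produces a Birkar-Shokurov minimal model of $\big(X,(B+\lambda A)+M\big)$, with $A$ in the boundary part, the application of Theorem \ref{thm:HL18_term_with_scaling} should be with $N=A$ and $P=0$ rather than with $N=0$ and $P=A$ as you wrote; otherwise the hypothesis of that theorem would call for a minimal model of the differently structured g-pair $\big(X,B+(M+\lambda A)\big)$.
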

	
	\begin{proof}
		For brevity we only indicate here the necessary modifications to the proof of Proposition \ref{prop:HH19_prop_g} and we also use the same notation. 
		
		By arguing by contradiction and by repeating verbatim the proof of Proposition \ref{prop:HH19_prop_g}, we infer that eventually there exists a $ (K_X+B+M) $-MMP with scaling of $ A $ over $ Z $ which consists only of flips, satisfies $ \lambda_i > \lambda_{i+1} $ for every $ i \geq 1 $, but does not terminate by assumption. Setting $\lambda := \lim\limits_{i\to \infty}\lambda_i $, we have $\lambda < \lambda_i$ for every $ i \geq 1$ and the divisor $K_X+B+M+\lambda A$ is pseudoeffective over $Z$. It follows from the assumptions of the proposition when $ \lambda = 0 $ or from Theorem \ref{thm:HLX} when $ \lambda > 0 $ that the g-pair $\big(X,(B+\lambda A)+M\big)$ has a minimal model in the sense of Birkar-Shokurov over $Z$, hence the above MMP terminates by Theorem \ref{thm:HL18_term_with_scaling}, a contradiction which completes the proof.
	\end{proof}
	
	Using the above results, we may finally prove Theorem \ref{thm:Mfs}.
		
	\begin{proof}[Proof of Theorem \ref{thm:Mfs}]
		The proof of this statement is essentially the same as that of Theorem \ref{thm:HH19_thm_g}, except that we replace Lemma \ref{lem:MM_existence_HH19} with Theorem \ref{thm:HLX} and Proposition \ref{prop:HH19_prop_g} with Proposition \ref{prop:HH19_prop_g_appendix}.
	\end{proof}	
	
	After we discussed the above proof with Jihao Liu, he suggested an alternative proof of Theorem \ref{thm:Mfs}(b), which is presented below. We would like to thank him for communicating this proof to us and for allowing us to include it with some more details in this appendix. 
	
	We start with the following easy lemma.
	
	\begin{lem}\label{lem:Nklt}
		Let $ (X/Z,B+M) $ be a $ \Q $-factorial NQC log canonical g-pair. Then for each $\varepsilon\in[0,1)$ we have $\Nklt(X,B+\varepsilon M)=\Nklt(X,B)$.
	\end{lem}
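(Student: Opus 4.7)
The plan is to compare discrepancies of an arbitrary divisorial valuation $E$ over $X$ simultaneously for the three g-pairs $(X,B+M)$, $(X,B+\varepsilon M)$ and $(X,B)$, the latter two viewed as g-pairs with the same birational data and nef parts $\varepsilon M'$ and $0$ respectively. The two essential inputs are the Negativity lemma applied to the nef part $M'$ and the strictness of the inequality $\varepsilon<1$.

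Concretely, I would fix a sufficiently high log resolution $f\colon X'\to X$ of $(X,B)$ such that $c_{X'}(E)$ is a prime divisor and such that there exists an NQC divisor $M'$ on $X'$ with $f_*M'=M$. Since $M'-f^*M$ is $f$-nef and its pushforward to $X$ vanishes, the Negativity lemma shows that $F:=f^*M-M'$ is effective and $f$-exceptional. Writing and subtracting the three discrepancy relations
\begin{align*}
K_{X'}+B'+M'&\sim_\R f^*(K_X+B+M),\\
K_{X'}+B'_\varepsilon+\varepsilon M'&\sim_\R f^*(K_X+B+\varepsilon M),\\
K_{X'}+B'_0&\sim_\R f^*(K_X+B),
\end{align*}
yields $B'-B'_\varepsilon=(1-\varepsilon)F$ and $B'-B'_0=F$, which in terms of discrepancies at $E$ becomes
\[ a(E,X,B+\varepsilon M)=a(E,X,B+M)+(1-\varepsilon)\mult_E F \quad\text{and}\quad a(E,X,B)=a(E,X,B+M)+\mult_E F. \]

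The inclusion $\Nklt(X,B)\subseteq\Nklt(X,B+\varepsilon M)$ is immediate from these two identities together with $F\geq 0$: if $a(E,X,B)\leq -1$, then $a(E,X,B+\varepsilon M)=a(E,X,B)-\varepsilon\mult_E F\leq -1$, so $c_X(E)\subseteq\Nklt(X,B+\varepsilon M)$. Taking the union over all such $E$ gives the desired inclusion.

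The reverse inclusion is the main obstacle and is where the hypothesis $\varepsilon<1$ is used in an essential way. Suppose $a(E,X,B+\varepsilon M)\leq -1$; log canonicity of $(X,B+M)$ forces $a(E,X,B+M)\geq -1$, so the first identity above gives $(1-\varepsilon)\mult_E F\leq 0$. Since $\varepsilon<1$ and $\mult_E F\geq 0$, this rigidity forces $\mult_E F=0$, whence $a(E,X,B+M)=-1$ and therefore $a(E,X,B)=-1$ by the two identities, so $c_X(E)\subseteq\Nklt(X,B)$. Again, taking the union over all such $E$ concludes the proof.
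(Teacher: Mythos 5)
Your proof is correct and follows essentially the same route as the paper: the same log resolution, the same Negativity lemma argument giving $F=f^*M-M'\geq 0$, the same two affine discrepancy identities, and the same use of log canonicity plus $\varepsilon<1$ to force $\mult_E F=0$ in the reverse inclusion. The only cosmetic difference is that the paper expresses the first identity directly as $a(E,X,B)=a(E,X,B+\varepsilon M)+\varepsilon\mult_E F$ and invokes Lemma \ref{lem:sing_smaller_boundary} for the bound $a(E,X,B+\varepsilon M)\geq -1$, while you derive both identities from comparison with $a(E,X,B+M)$ and use the given log canonicity of $(X,B+M)$; these are trivially equivalent.
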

	
	\begin{proof}
		Let $f\colon X' \to X$ be a log resolution of $(X,B)$ such that there exists an NQC divisor $M'$ over $Z$ such that $M=f_*M'$. By the Negativity lemma the $ \R $-divisor $F:= f^*M -M'$ is effective and $f$-exceptional. Then an easy calculation shows that for each prime divisor $E$ on $X'$ and for each $\varepsilon\in[0,1]$ we have
		\begin{equation}\label{eq:a1}
		a(E,X,B)=a(E,X,B+\varepsilon M)+\varepsilon\mult_E F
		\end{equation}
		and
		\begin{equation}\label{eq:a2}
		a(E,X,B+\varepsilon M) = a(E,X,B+M) + (1-\varepsilon) \mult_E F .		
		\end{equation}
		Note that $a(E,X,B+\varepsilon M)\geq{-}1$ for each $ \varepsilon \in [0,1] $ by Lemma \ref{lem:sing_smaller_boundary}. Thus, if $a(E,X,B)={-}1$, then $a(E,X,B+\varepsilon M)={-1}$ by \eqref{eq:a1}. Conversely, if $a(E,X,B+\varepsilon M)={-1}$, then $a(E,X,B+M)={-1}$ and $\mult_EF=0$ by \eqref{eq:a2}, hence $a(E,X,B)={-1}$ by \eqref{eq:a1}. This yields the statement.
	\end{proof}
	
	Next, we obtain:
	
	\begin{lem}\label{lem:hilfe2}
		Let $ \big(X/Z,(B+A)+M\big) $ be a $ \Q $-factorial NQC log canonical g-pair, where $A$ is an effective $ \R $-Cartier $ \R $-divisor on $ X $ which is ample over $ Z $. Then there exists an effective $\R$-divisor $\Delta$ on $ X $ such that $(X,\Delta)$ is a log canonical pair and $ K_X+\Delta \sim_{\R,Z} K_X+B+A+M $.
	\end{lem}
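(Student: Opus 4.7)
The plan is to produce an effective $\R$-Cartier $\R$-divisor $D$ on $X$ satisfying $D \sim_{\R,Z} A + M$ such that $(X, B+D)$ is log canonical; setting $\Delta := B + D$ then proves the claim.

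The key observation is that $M$ is nef over $Z$ on $X$. Let $f \colon X' \to X$ be a projective birational morphism realizing the g-pair structure, with $M = f_*M'$ and $M'$ nef over $Z$ on $X'$. Since $X$ is $\Q$-factorial, $M$ is $\R$-Cartier. For any curve $C$ in a fibre of $X \to Z$, lift $C$ to a curve $\widetilde C$ on $X'$ with $f_*\widetilde C = kC$ for some positive integer $k$; the projection formula then yields
\[
M \cdot C \;=\; f_*M' \cdot C \;=\; \tfrac{1}{k}\, M' \cdot \widetilde C \;\geq\; 0,
\]
so $M$ is nef over $Z$. Consequently, $A + M$ is the sum of an ample and a nef $\R$-Cartier divisor over $Z$, hence ample over $Z$ by Kleiman's numerical criterion.

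Since $A + M$ is ample $\R$-Cartier over $Z$, I would write $A + M \sim_{\R,Z} \sum_{i=1}^k a_i L_i$ with each $L_i$ a very ample Cartier divisor over $Z$ on $X$ and $a_i \in \R_{>0}$. For each $i$, pick $m_i \gg 0$ and a sufficiently general element $D_i \in |m_i L_i|_Z$ (basepoint-free over $Z$), and set
\[
D \;:=\; \sum_{i=1}^k \frac{a_i}{m_i}\, D_i.
\]
Then $D$ is effective, $D \sim_{\R,Z} A + M$, and for $m_i$ large, the coefficients $a_i/m_i$ are arbitrarily small. By Lemma~\ref{lem:sing_smaller_boundary} the pair $(X, B)$ is log canonical, and the standard relative Bertini argument applies: generic choice of each $D_i$ ensures that no $D_i$ shares a component with $B$ (so coefficients of $B+D$ remain at most $1$) and that, on any fixed log resolution of $(X, B)$ and of the supports of the $D_i$'s, the multiplicity of $D_i$ along the center of any prescribed divisorial valuation vanishes. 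This yields $a(E, X, B+D) = a(E, X, B) \geq -1$ for every divisorial valuation $E$ over $X$, so $(X, B+D)$ is log canonical.

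The only subtle point is the nefness of $M$ on $X$, which is not part of the hypotheses of a generalised pair but must be deduced from the definition; here, $\Q$-factoriality of $X$ (guaranteeing $M$ is $\R$-Cartier) combined with the projection formula makes the deduction immediate, after which the proof reduces to a routine Bertini construction producing a general effective divisor representing an ample class with arbitrarily small coefficients.
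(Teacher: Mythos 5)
Your proposed proof contains a fundamental gap: the claim that $M$ is nef over $Z$ on $X$ is false in general, and the projection-formula argument given for it does not hold. The projection formula states $(f^*D)\cdot\widetilde C = D\cdot f_*\widetilde C$; it does not give $f_*M'\cdot C = \tfrac{1}{k}\,M'\cdot\widetilde C$. To compute $M\cdot C$ one would write $f^*M = M' + F$ with $F \geq 0$ $f$-exceptional (Negativity lemma), obtaining $M\cdot C = \tfrac{1}{k}\bigl(M'\cdot\widetilde C + F\cdot\widetilde C\bigr)$; the first term is nonnegative, but $F\cdot\widetilde C$ can be negative, so $M\cdot C$ can be negative. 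Indeed, the pushforward of a nef divisor is not nef in general --- for a flipping contraction with common resolution $p\colon W\to X$, $q\colon W\to X^+$ and $H^+$ ample on $X^+$, the divisor $q^*H^+$ is nef on $W$ but $p_*q^*H^+$ meets flipping curves negatively. This is precisely why the theory of generalised pairs requires carrying the nef part $M'$ on an auxiliary model $X'$; if $M$ were always nef on $X$, the generalised-pair formalism would be largely redundant. Once this step fails, $A+M$ need not be ample over $Z$, and the Bertini construction cannot be applied to $A+M$ directly.

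The paper's proof sidesteps this by never invoking nefness of $M$ on $X$. It uses only the \emph{openness} of ampleness: one chooses $\varepsilon$ close to $1$ so that the small perturbation $A+(1-\varepsilon)M$ of the ample divisor $A$ stays ample over $Z$ (this uses only that $M$ is $\R$-Cartier, which $\Q$-factoriality provides), and absorbs that ample piece by a general member $H$. The residual nef part $N=\varepsilon M$ is then handled on the birational model $X'$, where $N'$ genuinely is nef: one passes to a model with $\Exc(f) = \Supp F$ (where $F=f^*N-N'$), verifies using Lemmas~\ref{lem:sing_smaller_boundary} and~\ref{lem:Nklt} that no $f$-exceptional divisor is a log canonical place of $(X,B+N)$, and exploits this to find an effective $f$-exceptional $E$ making $f^*H-E$ ample over $Z$ while keeping $(X',B'+F+E)$ sub-log canonical. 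Only then does a Bertini-type choice of a general $H'\sim_{\R,Z} N'+f^*H-E$ (an ample class on $X'$) produce the boundary $\Delta = B + f_*H'$. The extra bookkeeping with $F$, $E$ and $H'$ on the higher model is exactly the content your proposal skips by incorrectly collapsing the nef part down to $X$.
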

	
	\begin{proof}
		Pick $\varepsilon \in (0,1)$ such that the $ \R $-divisor $A+(1-\varepsilon)M$ is ample over $Z$, and set $N:=\varepsilon M$. Since the g-pair $(X,B+M)$ is log canonical by Lemma \ref{lem:sing_smaller_boundary}, we have
		\begin{equation}\label{eq:699}
			\Nklt(X,B+N)=\Nklt(X,B)
		\end{equation}
		by Lemma \ref{lem:Nklt}. Now, pick a general effective $ \R $-divisor $H\sim_{\R,Z}A+(1-\varepsilon)M$ such that the g-pair $ \big(X,(B+H)+N\big) $ is log canonical, and observe that
		\begin{equation}\label{eq:699b}
			K_X+B+H+N \sim_{\R,Z} K_X+B+A+M .
		\end{equation}
	
		Let $f \colon X' \to X$ be a birational morphism such that there exists an NQC divisor $N'$ over $Z$ such that $N=f_*N'$. By the Negativity lemma the divisor $F:= f^*N -N'$ is effective, and by \cite[Lemma 5.18]{HaconLiu21} we may assume that
		\begin{equation}\label{eq:699a}
		\Exc(f) = \Supp F.
		\end{equation}
		 From now on we argue as in the proof of \cite[Theorem 5.2]{HaconLiu21}. Write $ K_{X'} + B' := f^*(K_X + B) $, so that
		\[ K_{X'} + B' +F + N' = f^*(K_X + B + N) .	\]
		By \eqref{eq:699} and \eqref{eq:699a} we conclude that no $f$-exceptional prime divisor is a log canonical place of $(X,B+N)$. This, together with Lemma \ref{lem:sing_smaller_boundary}, implies that there exists an effective $f$-exceptional $ \R $-divisor $E$ on $X'$ such that the pair $(X',B' + F + E)$ is sub-log canonical and the $ \R $-divisor $f^*H - E$ is ample over $Z$. In particular, the $ \R $-divisor $N' + f^*H - E$ is ample over $Z$, and we now pick a general effective $\R$-divisor $ H' \sim_{\R,Z} N' + f^*H - E$ such that the pair $(X',B' + F + E + H')$ is sub-log canonical. Note that 
		\begin{equation}\label{eq:699c}
			K_{X'} + B' +F + E + H' \sim_{\R,Z} f^*(K_X + B + H + N) ,
		\end{equation}
		so there exists an $\R$-divisor $D$ on $X$ such that $K_{X'} + B' +F + E + H' \sim_\R f^*D$. Set $\Delta := B + f_*H' $. Then $K_X+\Delta\sim_\R D$, so that $K_{X'} + B' +F + E + H' \sim_\R f^*(K_X+\Delta)$, and hence the pair $(X,\Delta)$ is log canonical. Since 
		\[ K_X + \Delta \sim_{\R,Z} K_X+B+A+M , \]
		by \eqref{eq:699b} and \eqref{eq:699c}, this finishes the proof.
	\end{proof}
	
	As promised, we give now a short proof of Theorem \ref{thm:HLX}.

	\begin{proof}[Proof of Theorem \ref{thm:HLX}]
		Pick $ \xi \in (0,1) $ and note that $ \big(X/Z,(B+\xi A)+M\big) $ is a $ \Q $-factorial NQC log canonical g-pair by Lemma \ref{lem:sing_smaller_boundary}. By Lemma \ref{lem:hilfe2} there exists an effective $\R$-divisor $\Delta$ on $ X $ such that $(X,\Delta)$ is a log canonical pair and 
		\[ K_X+\Delta \sim_{\R,Z} K_X+B+\xi A+M . \] 
		Pick a general ample over $Z$ effective $ \R $-divisor $ H \sim_{\R,Z} A $ such that the pair $ \big( X/Z, \Delta + (1 - \xi) H \big) $ is log canonical, and observe that the divisor 
		\[ K_X+\Delta + (1 - \xi) H \sim_{\R,Z} K_X + B + A + M \]
		is pseudoeffective over $ Z $. By \cite[Theorem 1.5]{HH20} this pair has a minimal model in the sense of Birkar-Shokurov over $ Z $, thus it has a minimal model over $Z$ by \cite[Theorem 1.7]{HH20}. Hence, the g-pair $ \big(X,(B+A)+M\big) $ has a minimal model over $ Z $.
	\end{proof}
		
	Finally, we can give an alternative proof of Theorem \ref{thm:Mfs}(b).
	
	\begin{proof}[Proof of Theorem \ref{thm:Mfs}(b)]
		By assumption, the divisor $ K_X+B+M $ is not pseudoeffective over $ Z $. Pick a general ample over $Z$ $ \R $-divisor $A \geq 0$ on $X$. Then for $0<\varepsilon\ll1$ the divisor $K_X+B+\varepsilon A+M$ is not pseudoeffective over $ Z $, and it suffices to show that there exists a $(K_X+B+\varepsilon A+M)$-MMP with scaling of $A$ which terminates. By Lemma \ref{lem:hilfe2} there exists an effective $\R$-divisor $\Delta$ on $X$ such that $(X,\Delta)$ is a log canonical pair and $K_X+\Delta \sim_{\R,Z} K_X+B+\varepsilon A+M$. We conclude by \cite[Theorem 1.7]{HH20}.
	\end{proof}	
	
	\bibliographystyle{amsalpha}
	\bibliography{BibliographyForPapers}
	
\end{document}